\theoremstyle{plain}
\newtheorem{thm}{{\bf Theorem}}[section]
\newtheorem{cor}[thm]{{\bf  Corollary}}
\newtheorem{prop}[thm]{{\bf Proposition}}
\newtheorem{lemma}[thm]{{\bf Lemma}}
\newtheorem{fact}[thm]{{\bf Fact}}
\newtheorem{claim}[thm]{{\bf Claim}}
\theoremstyle{definition}
\newtheorem{define}[thm]{{\bf Definition}}
\newtheorem{note}[thm]{{\bf Note}}
\newcommand{\cf}{\mathord{\mathrm{cf}}}
\newcommand{\cof}{\mathord{\mathrm{cof}}}
\newcommand{\dom}{\mathord{\mathrm{dom}}}
\newcommand{\size}[1]{\left\vert {#1} \right\vert}
\newcommand{\p}{\mathcal{P}}
\newcommand{\ot}{\mathord{\mathrm{ot}}}
\newcommand{\col}{\mathord{\mathrm{Col}}}
\newcommand{\seq}[1]{\langle {#1} \rangle}
\newcommand{\ka}{\kappa}
\newcommand{\la}{\lambda}
\newcommand{\om}{\omega}
\newcommand{\bbP}{\mathbb{P}}
\newcommand{\bbQ}{\mathbb{Q}}
\newcommand{\bbR}{\mathbb{R}}
\newcommand{\calF}{\mathcal{F}}
\newcommand{\calH}{\mathcal{H}}
\newcommand{\HOD}{\mathrm{HOD}}
\newcommand{\gHOD}{\mathrm{gHOD}}
\newcommand{\M}{\mathbb{M}}
\title[The DDG and very large cardinals]{The downward directed grounds hypothesis and
very large cardinals}
\author[T. Usuba]{Toshimichi Usuba}
\address[T. Usuba]
{Faculty of Science and Engineering,
Waseda University, 
Okubo 3-4-1, Shinjyuku, Tokyo, 169-8555 Japan}
\email{usuba@waseda.jp}
\keywords{Forcing method, Set-theoretic geology, Downward directed grounds hypothesis, Large cardinal, Generic multiverse}
\subjclass[2010]{Primary 03E40, 03E45, 03E55}
\begin{document}

\begin{abstract}
A transitive model $M$ of ZFC is called a
ground if the universe $V$ is a set forcing extension of $M$.
We show that the grounds of $V$ are downward set-directed.
Consequently, we establish some fundamental theorems
on the forcing method and the set-theoretic geology. For instance,
(1) the mantle, the intersection of all grounds,
must be a model of ZFC.
(2) $V$ has only set many grounds if and only if
the mantle is a ground.
We also show that if the universe has some very large cardinal,
then the mantle must be a ground.
\end{abstract}
\maketitle

\section{Introduction}
After Cohen's solution of the continuum hypothesis,
{\it the forcing method} became an important tool of set-theory.
While many independent results are proved by the forcing method,
the nature of forcing is also a fundamental topic in this field. 

The forcing method uses second order objects, ground models, and generic extensions.
A natural framework for treating such objects is a second-order set theory such as
Von Neumann-Bernays-G\"odel set-theory.
However in the first order set theory ZFC,
it is more difficult to treat.
On the other hand, Laver \cite{Laver2} and, independently, Woodin
found that a ground model is definable in its set-forcing extension.
Later, Fuchs-Hamkins-Reitz \cite{FHR}
refined Laver and Woodin's result, and
they give a uniform definition of all ground models of the universe:
\begin{thm}[Reitz \cite{Reitz}, Fuchs-Hamkins-Reitz \cite{FHR}]
There is a first order formula $\varphi(x,r)$ of set theory such that:
\begin{enumerate}
\item For each $r$, the class $W_r=\{x: \varphi(x,r)\}$
is a ground model of $V$.
\item For every ground model $M$  of $V$,
there is $r$ such that $M=W_r$.
\end{enumerate}
\end{thm}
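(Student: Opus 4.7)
My approach would be to make uniform the Laver-Woodin ground definability theorem by building $\varphi(x,r)$ around the pair of combinatorial properties isolated by Hamkins. Say that $(M,V)$ has the $\delta$-\emph{cover property} if every $x\subseteq M$ of $V$-cardinality less than $\delta$ is included in some $y\in M$ of $M$-cardinality less than $\delta$, and the $\delta$-\emph{approximation property} if every $A\subseteq M$ with $A\cap y\in M$ for each $y\in M$ of $M$-cardinality less than $\delta$ is itself in $M$. Two facts drive the argument: (i) if $V=M[G]$ is a set-generic extension of a transitive class $M\models\mathrm{ZFC}$ by a poset of $M$-size below $\delta$, then $(M,V)$ has both properties; and (ii) for a prescribed regular $\delta$ and transitive set $s$, at most one transitive class $M^{*}\models\mathrm{ZFC}$ containing $\mathrm{Ord}$ can satisfy $(H_{\delta^{+}})^{M^{*}}=s$ along with both properties relative to $V$.

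Given these, I would take $r$ to code a pair $(\delta,s)$ with $\delta$ regular and $s$ transitive, and call $r$ \emph{admissible} if $s$ contains a forcing notion $\mathbb{P}$ of size below $\delta$ together with $V$-side data witnessing that $V$ is a $\mathbb{P}$-generic extension of some class whose $H_{\delta^{+}}$ is $s$. For admissible $r$, $\varphi(x,r)$ asserts that $x$ belongs to the unique $M^{*}$ guaranteed by (ii); for non-admissible $r$ I would set $W_{r}=V$ by convention, so that the formula is well defined everywhere. The apparent second-order statement ``there exists a class $M^{*}\dots$'' is avoided in the usual way, following Laver: for each $\lambda$, $M^{*}\cap V_{\lambda}$ is read off first-order from $s$, $\delta$, and $V_{\mu}$ for sufficiently large $\mu$, by describing $M^{*}\cap V_{\lambda}$ as the transitive collapse of a canonical substructure of $V_{\mu}$ whose elements are determined by the $\delta$-approximation data and the parameter $s$.

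Clauses (1) and (2) then follow: for admissible $r$ the uniqueness forces $W_{r}$ to coincide with the witnessing ground, so $W_{r}$ is a ground; and any ground $M$ with $V=M[G]$, $|\mathbb{P}|^{M}<\delta$ is realised as $W_{r}$ for $r$ coding $(\delta,(H_{\delta^{+}})^{M})$. The principal obstacle is the uniqueness assertion (ii), which is the technical heart of the Laver-Woodin argument; I would prove it by induction on rank, using $\delta$-cover to reduce any element of $M^{*}$ to its bounded approximations and $\delta$-approximation, applied inside $s$, to identify those approximations unambiguously. The secondary delicate point is giving the level-by-level first-order rendition of ``$x\in M^{*}$'' in a way that survives uniformly in the parameter $r$, with no residual quantification over classes.
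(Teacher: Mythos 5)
The paper does not actually prove this theorem; it states it as a known result, citing Reitz and Fuchs--Hamkins--Reitz (it appears verbatim later as Fact~2.2). The ingredients you identify do, however, correspond exactly to what the paper quotes from Hamkins and Laver in support of that citation: your fact~(i) is the paper's Fact~3.4 (a ground via a poset of size $<\kappa$ has the $\kappa$-cover and $\kappa$-approximation properties), and your uniqueness claim~(ii) is the paper's Fact~3.3 (two inner models with the $\kappa$-cover and $\kappa$-approximation properties, agreeing on $\mathcal P(\kappa)$, coincide), with the first-order rendition of ``$x\in M^{*}$'' being the same level-by-level Laver trick the paper later reruns in Lemma~4.2 and Corollary~4.7 for ZFC$-$P submodels of $\mathcal H(\chi)$. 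So your reconstruction is the standard Laver--Woodin--Hamkins argument, which is precisely the one the citation points to, and the plan to assign $W_r=V$ to inadmissible parameters is also how the literature handles the totality requirement.

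One point in your statement of (ii) deserves a flag: as you wrote it, you assert uniqueness for any transitive $M^{*}\supseteq\mathrm{Ord}$ with the two properties and $(H_{\delta^{+}})^{M^{*}}=s$, but Hamkins' uniqueness lemma as stated in the paper (Fact~3.3) also requires that $\delta^{+}$ be computed correctly by the competing models, i.e.\ $(\delta^{+})^{M}=(\delta^{+})^{N}=\delta^{+}$. In your setup this is encoded by the requirement $s\cap\mathrm{Ord}=\delta^{+}$, and for the admissible parameters you actually use (grounds via forcings of $M$-size $<\delta$) it holds automatically since small forcing preserves cardinals above the poset. You should either fold the condition $s\cap\mathrm{Ord}=\delta^{+}$ into your admissibility notion or note explicitly that the $\delta$-covering hypothesis alone need not force $(\delta^{+})^{M^{*}}=\delta^{+}$. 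With that adjustment the proposal is a faithful rendering of the argument the cited theorem rests on.
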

This allows us to treat 
ground models within the first order set theory ZFC.
Using this uniform definition, 
a study of the structure of all ground models was initiated,
which attempts a study of the nature of the forcing method.
The study is now called the {\it set-theoretic geology}.
In \cite{FHR}, some notions
in the forcing method and the set-theoretic geology were introduced.
One of the most important and natural concepts is the notion of the {\it mantle}:
\begin{define}
The \emph{mantle} is the intersection
of all ground models of the universe.
\end{define}
The mantle is a very natural and  canonical object in the context of the forcing method.
Fuchs-Hamkins-Reitz \cite{FHR} proved the structure of the mantle can be manipulated by
class forcings. For example,
they proved that the universe $V$ can be the mantle of some class forcing extension $V[G]$.
Besides the natural definition of the mantle,
there are many open questions about the mantle.
An important one is whether the mantle is a model of ZFC or not.
If it is a model of ZFC, then the mantle can be seen as the {\it core} of 
all ground models.

The key to the solution of this question is the downward directedness of the ground models.
The \emph{downward directed grounds hypothesis}, DDG,
is the assertion that every two ground models have a common ground model.
The \emph{strong DDG} is the assertion that 
every collection $\{W_r:r \in X\}$ of ground models indexed by some set $X$ has a common ground model.
In \cite{FHR}, it was proved that if DDG holds,
then the mantle is a model of ZF. Furthermore, if the strong DDG holds,
then the mantle satisfies the axiom of choice.
The DDG is not only a key to the solution, but it expresses a fundamental property of the structure of  ground models.

All known models, such as $L[A]$, $K$, $\HOD$, and class forcing extensions, satisfy
the strong DDG, and their mantles are  models of ZFC.
So one might expect that the DDG is a theorem of ZFC.
Meanwhile, attempts have been made to construct a counterexample by
the forcing method or inner model construction. 

In this paper,
we prove that the strong DDG is actually a theorem of ZFC.
\begin{thm}
The strong DDG holds.
\end{thm}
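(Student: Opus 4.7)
The plan is to combine the uniform definability of grounds from the theorem above with the known cover-and-approximation machinery of Laver, Woodin and Hamkins. First I would fix the given set-indexed family $\{W_r : r \in X\}$ of grounds and choose a regular cardinal $\delta$ large enough that $X \in V_\delta$ and, for each $r \in X$, the witnessing forcing $\bbP_r \in W_r$ with $V = W_r[G_r]$ has $|\bbP_r|^{W_r} < \delta$. By the approximation-cover theorem, each such $W_r$ then satisfies the $\delta$-cover and $\delta$-approximation properties in $V$ and is uniquely determined among such submodels by $V_\delta^{W_r}$ together with $\delta$.

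The main reduction is to the following key lemma: for every regular uncountable $\delta$, the family of all grounds $W$ that witness $V$ as a set-generic extension by a forcing of size $< \delta$ admits a common sub-ground $W^\delta$. Granting this lemma, applying it to the $\delta$ chosen above produces a ground contained in every $W_r$, which is exactly the strong DDG. To prove the lemma I would first use the uniqueness clause above to show that the collection of such $W$ (call them the \emph{$\delta$-grounds}) is parameterized by elements of some fixed $V_\lambda$ and is therefore only a set. Then I would construct a candidate $W^\delta$ as a reflected intersection: pick a sufficiently closed $V_\mu$ with $\mu \gg \delta$ in which all the parameters for the $\delta$-grounds live, take the appropriate intersection of their local structures there, and extend uniformly.

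The main obstacle, I expect, will be showing that $W^\delta$ actually is a ground rather than merely a ZFC inner model: recognizing a class as a ground requires exhibiting a forcing and generic, not just a transitive subclass of $V$. Here the uniqueness clause of the approximation-cover theorem is the tool of choice, since if a definable transitive inner model has the $\delta$-cover and $\delta$-approximation properties in $V$ and its $V_\delta$ matches a candidate ground, then it must coincide with that ground. The delicate calculation will be verifying that $\delta$-cover and $\delta$-approximation pass to the intersection of set-many $\delta$-grounds; this is where a pigeonhole or reflection argument tuned to $|X|$ and $\delta$ must enter, together with a careful choice of $\mu$ so that witnesses to cover and approximation for each member can be amalgamated inside $V_\mu$. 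Once $W^\delta$ is certified to be a ground, it is automatically a common sub-ground of the original family $\{W_r : r \in X\}$, completing the argument.
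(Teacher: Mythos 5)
Your setup (choosing $\delta$ above $|X|$ and all $|\bbP_r|^{W_r}$, invoking cover/approximation and the uniqueness of grounds with these properties, and noting that the $\delta$-grounds form a set) matches the opening of the paper's argument, but the core of your construction has a genuine gap. You propose to build the common ground as a ``reflected intersection'' of the local structures of the $\delta$-grounds and then to verify that the $\delta$-cover and $\delta$-approximation properties ``pass to the intersection.'' They do not, and no pigeonhole or reflection argument tuned to $|X|$ and $\delta$ will make them: a set of ordinals of size $<\delta$ is covered by a small set in each $W_r$ separately, but nothing forces a small covering set to lie in $\bigcap_r W_r$. The paper's actual mechanism is different: working locally in $\calH(\theta^+)$, one amalgamates covering functions by a $\ka$-length interleaving across all the $W_r$ (using $|X|<\ka$), takes the union $H$, and then uses the \emph{approximation} property of each individual $W_r$ to show $H\in W_r$; the common ground is then built from $L[A]\cap\calH(\theta^+)$ for a set of ordinals $A$ coding $H$ --- a model that sits \emph{inside} $\bigcap_r W_r$ but is emphatically not that intersection (the mantle itself need not be a ground). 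The pigeonhole in the paper serves a different purpose from the one you assign it: it fixes $\p(\ka^{++})\cap M^\theta$ on a proper class of $\theta$ so that the local models cohere and their union is a single class-sized ZFC model; your single $V_\mu$ cannot produce a proper-class model.

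The second gap is in certifying that the resulting class $W$ is a ground. You correctly identify this as the main obstacle but propose to resolve it with the uniqueness clause of the approximation-cover theorem; that clause can only identify a candidate with a ground you already have, whereas the common ground being constructed is in general not equal to any of the given $W_r$. What is actually needed is Bukovsk\'y's theorem: a transitive ZFC inner model satisfying the $\ka$-uniform covering property for $V$ is a ground (via a $\ka$-c.c.\ forcing). Your proposal never mentions the \emph{uniform} covering property, which is the property the amalgamation is designed to secure for $W$ and the only tool in the paper that converts an inner model into a ground without exhibiting a forcing by hand.
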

Because of this result, 
under ZFC, some fundamental properties of the 
set-theoretic geology can be derived without any assumptions. 
For example,  we prove the following:
\begin{thm}
\begin{enumerate}
\item The mantle  is a model of ZFC.
\item The universe $V$ has only set many ground models if and only if
the mantle is the minimum ground of all forcing extensions of $V$.
\end{enumerate}
\end{thm}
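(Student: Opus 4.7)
The plan is to derive both parts from the strong DDG (the preceding theorem), following the Fuchs--Hamkins--Reitz template but now unconditionally.

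For part (1), I would verify the ZFC axioms in the mantle $M$ one by one. Extensionality, Pairing, Union, Infinity, and Foundation are immediate because $M$ is transitive and closed under basic operations at the level of every ground. For Separation, Replacement, and Power Set, the key move is: given $a \in M$ and a formula $\varphi$, identify a set-indexed family of grounds whose intersection already computes the relevant axiom instance; strong DDG then produces a single ground $W_0$ contained in all of them, and absoluteness from $W_0$ down to $M$ yields the required witness inside $M$. The Axiom of Choice is handled similarly by aligning choice functions from set many grounds inside one common ground below $M$. The uniform definability $\varphi(x,r)$ is essential throughout to guarantee that the indexing families really are sets.

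For the forward direction of part (2), assume $V$ has only set many grounds, enumerated as $\{W_r : r \in X\}$ through the uniform formula with parameter $r \in X$. Strong DDG yields a common ground $W \subseteq W_r$ for every $r \in X$. Since $M = \bigcap_{r \in X} W_r$ we get $W \subseteq M$; since $W$ is itself one of the $W_r$ we also get $M \subseteq W$; hence $W = M$ and the mantle is a ground of $V$. For any set-forcing extension $V[H]$, writing $V = M[G]$ with $P \in M$ and $G$ being $P$-generic over $M$, we have $V[H] = M[G \times H]$, so $M$ is a ground of $V[H]$ as well. Minimality follows because the mantle is invariant under set forcing (which, under strong DDG, is proved exactly as in Fuchs--Hamkins--Reitz), so the mantle of $V[H]$ equals $M$ and is thus contained in every ground of $V[H]$.

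For the reverse direction, assume $M$ is a ground of $V$, so $V = M[G]$ with some $P \in M$. Every ground $W$ of $V$ contains $M$ (the mantle sits below all grounds), so $M \subseteq W \subseteq V = M[G]$, and the classical intermediate model theorem forces $W = M[G \cap \mathbb{B}]$ for some complete subalgebra $\mathbb{B}$ of the Boolean completion of $P$. Such subalgebras form a set in $M$, giving only set many grounds of $V$.

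I expect the main obstacle to lie in part (1), specifically verifying Replacement: arranging the witnessing grounds into a genuinely set-indexed family requires care with the parameter $r$ of the uniform definition, since a naive approach risks collecting a proper class of parameters. It is precisely here that \emph{strong} DDG, as opposed to the weaker pairwise DDG, is needed.
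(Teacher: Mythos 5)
Your proposal is correct and follows essentially the same route as the paper: both parts are derived from the strong DDG, with (1) being the Fuchs--Hamkins--Reitz result that the strong DDG implies the mantle models ZFC (which the paper simply cites), and (2) obtained, as you do, from a common ground below a set of representative grounds, forcing invariance of the mantle (via $g\M=\M$), and the intermediate-model theorem. The one caveat concerns your sketch of (1): Separation and Replacement are not obtained by ``absoluteness from $W_0$ down to $\M$''; the actual argument verifies closure under the G\"odel operations and almost universality (using that for every set $A$ the strong DDG yields a ground $U$ with $\M\cap A=U\cap A$), from which ZF follows by the criterion in Fact 2.1 of the paper, with the axiom of choice handled by the same device.
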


Next we look at the statement that
``$V$ has class many ground models''.
A natural model of this statement is given by a class Easton forcing extension,
and one can show that this statement is consistent with 
supercompact cardinals as follows.
For a cardinal $\ka$, 
by restricting the domain of Easton forcing,
we may assume that a class Easton forcing is $\ka$-directed closed.
Combining this observation with 
Laver's indestructiblity theorem (\cite{Laver}),
we have the consistency of
the statement that ``$V$ has class many ground models''+
``there exists a supercompact cardinal''.

In contrast to supercompact cardinals,
it is known that some very large cardinals, such as
extendible cardinals, cannot be indestructible for directed closed forcing
(Bgaria-Hamkins-Tsaprounis-Usuba \cite{BHTU}),
so we cannot prove the consistency 
of ``$V$ has class many ground models''+``there exists an extendible cardinal''
by this method.
Hence it is natural to ask whether existence of
class many ground models  is consistent with very large cardinals.
In the last half of this paper, we will give a partial answer to this question,
we  show that some very large cardinal is inconsistent with
class many ground models.

\begin{define}
Let $\ka$ be an infinite cardinal.
We say that $\ka$ is  \emph{hyper-huge}
if for every $\la>\ka$,
there is an elementary embedding $j:V \to M$ into some inner model $M$
such that the critical point of $j$ is $\ka$,
$\la<j(\ka)$, and ${}^{j(\la)} M \subseteq M$.
\end{define}
We show that our hyper-huge cardinal excludes
class many ground models. Furthermore, if a hyper-huge cardinal exists,
then the mantle must be  the minimum ground model
of the universe.
This indicates an unexpected connection between large cardinal axioms and the forcing method.
\begin{thm}
Suppose a hyper-huge cardinal exists.
Then the universe $V$ has only set many ground models,
and the mantle is a ground model of $V$.
\end{thm}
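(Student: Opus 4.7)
The plan is to derive the conclusion in two moves: first, to show that the existence of a hyper-huge cardinal forces the class of grounds of $V$ to be a set, and then to combine this with the strong DDG, which by clause $(2)$ of the preceding theorem immediately yields that the mantle is a ground model.

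The first and main step is to fix a hyper-huge cardinal $\ka$ and exhibit a single cardinal $\Theta$ such that every ground $W$ of $V$ arises from forcing of size less than $\Theta$ over $W$; that is, $V=W[G]$ for some $G$ that is $\bbP$-generic over $W$ with $|\bbP|^W<\Theta$. I expect $\Theta$ can be taken as $\ka$ itself or slightly above. Given a putative counterexample --- a ground $W$ with $V=W[G]$ for $\bbP\in W$ of size $\la\geq\ka$ in $W$ --- I would invoke the hyper-huge embedding $j:V\to M$ with $\crit(j)=\ka$, $\la<j(\ka)$, and ${}^{j(\la)}M\subseteq M$. The exceptional closure of $M$ guarantees that $j\restriction W$, $j[G]$ and $\bbP$ itself all lie in $M$, so that $M$ witnesses $W$ through data of modest size. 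The aim is then to combine the elementarity of $j\restriction W:W\to j(W)$ with the $\la^+$-cover and $\la^+$-approximation properties of the pair $(W,V)$ --- which hold by the bound on $|\bbP|$ and the Hamkins approximation-and-cover theorem --- to derive an internal contradiction inside $M$. Heuristically, $j(W)$ is a ground of $M$ via $j(\bbP)$ of cardinality $j(\la)>\la$, yet the presence of $j[G]\in M$ should exhibit $W$ as reachable from $V$ by forcing small enough to clash with the assumed minimality of $\la$.

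The second step is the combinatorial consequence together with the closing argument. Once every ground is reachable from $V$ by forcing of size less than $\Theta$, the Fuchs--Hamkins--Reitz style uniqueness of grounds satisfying the $\Theta^+$-cover and $\Theta^+$-approximation properties implies that any such $W$ is determined by $W\cap H_{\Theta^+}^V$; hence the grounds inject into $\p(H_{\Theta^+}^V)$ and form a set. Having set many grounds, the strong DDG provides a common ground $W^*$ lying below every ground of $V$. By definition the mantle is contained in every ground, in particular in $W^*$; conversely $W^*$ itself is a ground, so it contains the mantle. Thus $W^*$ equals the mantle, which is therefore a ground. The main obstacle is Step~1: extracting an actual contradiction from the tension between a large-forcing ground $W$ and the closure of $M$. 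The closure makes $W$ visible in $M$, but turning visibility into a genuine smallness conclusion for $\bbP$ requires carefully aligning the generic $G$ with the embedding $j$ so that the reflected properties of $j(\bbP)$ in $j(W)$, transported back via elementarity, really do force $\la$ to be bounded by $\ka$. This alignment is the delicate heart of the argument.
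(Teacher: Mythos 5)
Your Step 2 is sound and matches the paper: once every ground is obtained by forcing of size less than a fixed $\ka$, the grounds are determined by small data (the paper uses Lemma \ref{3.5}: a ground $W$ with $V=W[G]$, $G\subseteq\bbP_W$, $\size{\bbP_W}^W=\la<\ka$, is determined by $\seq{\bbP_W,\la,\p(\la)^W}$), hence form a set; the strong DDG then gives a ground $W^*\subseteq\bigcap_r W_r=\M\subseteq W^*$, so $\M$ is a ground. The problem is Step 1, which you yourself flag as unresolved, and the gap there is genuine: the heart of the paper's Proposition \ref{3.1.1} is precisely the piece you leave as a heuristic. Two specific ideas are missing. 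First, you conflate membership in the target model $M$ of the embedding with membership in $j(W)$. Closure of $M$ under $j(\la)$-sequences does put $\bbP$, $j``G$, and $j\restriction V_\la$ inside $M$, but what the argument actually needs is that $W$ is captured by $j(W)$, i.e.\ that $W_{j(\la)}\subseteq j(W_\la)$. The paper proves this by Solovay's device: a pairwise disjoint sequence of stationary subsets of $j(\la)\cap\cof(\om)$ chosen in $W$ codes $j``j(\la)$ as a definable class of $j(W)$ (an ordinal $\alpha$ is in $j``j(\la)$ iff $S^*_\alpha\cap\sup(j``j(\la))$ is stationary in $j(W)$), whence $j``j(\la)\in j(W)$ and every set of ordinals in $W_{j(\la)}$ lands in $j(W_\la)$. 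Nothing in your sketch produces this inclusion, and without it there is no way to compare $W$ with $j(W)$ inside $M$.

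Second, the mechanism that produces a \emph{small} poset is not a contradiction with a ``minimal'' $\la$ but a direct reflection: from $W_{j(\la)}\subseteq j(W_\la)\subseteq N_{j(\la)}=V_{j(\la)}=W_{j(\la)}[H]$ and the intermediate-model theorem (Fact \ref{2.31}), $N$ sees that $N_{j(\la)}$ is a forcing extension of $j(W_\la)$ by a poset in $j(W_\ka)$; elementarity of $j$ then yields in $V$ that $V_\la$ is a forcing extension of $W_\la$ by a poset in $W_\ka$, i.e.\ of size $<\ka$. One further step you omit is also needed: this gives only a local statement for each sufficiently large inaccessible $\la$, and a pigeonhole argument over proper class many such $\la$ is required to fix a single $\bbP\in W_\ka$ and a single filter $G$ with $W_\la[G]=V_\la$ for class many $\la$, hence $W[G]=V$. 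As written, your proposal sets up the right objects but does not contain the argument that makes Step 1 work.
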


In the last section, we show some interesting consequences of
main theorems, and will prove some fundamental properties about 
set-theoretic geology and the generic multiverse.

\section{Basic materials}
In this section,
we present some basic notations, definitions, and known facts.
We also make some easy observations.

Throughout this paper,
$V$ will denote the universe, so $V$ is a transitive model of
ZFC containing all ordinals unless otherwise specified.
\begin{note}
In this paper,
a \emph{class} 
means a second order object in the sense of
Von Neumann-Bernays-G\"odel set-theory
unless otherwise specified.
We do not require that a class $M$ is definable in $V$ with some
parameters, but
we assume that $V$ satisfies the replacement schemes 
for the formulas of the language $\{\in, M\}$
(where we identify $M$ as a unary predicate).
Note that, for every definable class $M$,
$V$ satisfies the replacement schemes 
for the formulas of the language $\{\in, M\}$.
Hence, every definable class is a \emph{class} in our sense.
Also note that if $W$ is a ground model of $V$,
then $W$ is a class in our sense.
\end{note}

In this paper, we will assume that a transitive model $M \subseteq V$ of ZF(C)
is a class of $V$ unless otherwise specified.

See  Theorem 13.9 in Jech \cite{Jech},
and see Definition 13.2 in \cite{Jech} for the definition of
the G\"odel operations.
\begin{fact}\label{model}
Let $N \subseteq V$ be a transitive class containing all ordinals.
Then $N$ is a model of ZF if and only if
$N$ is closed under the G\"odel operations 
and
$N$ is almost universal, that is, for every set $x \subseteq N$, there is
$y \in N$ with $x \subseteq y$.
\end{fact}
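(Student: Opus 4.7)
The plan is to prove the two directions separately; the reverse direction carries almost all of the content.

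For the forward direction, assume $N \models \mathrm{ZF}$. Each G\"odel operation is given by an explicit $\Delta_0$ definition (pair, difference, Cartesian product, domain, range, permutations of tuples, and so on), so by transitivity of $N$ the value computed in $V$ agrees with the value computed in $N$, and the corresponding ZF axiom inside $N$ places the result back into $N$. For almost universality, given a set $x \subseteq N$ in $V$, apply Replacement in $V$ to the rank function (which is absolute between $V$ and $N$ because $N$ is transitive and contains all ordinals) to bound $\{\mathrm{rank}(a) : a \in x\}$ by some ordinal $\alpha$; then $V_\alpha^N$ exists in $N$ and contains $x$.

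For the reverse direction I would proceed in three stages. Stage one is $\Delta_0$ absoluteness: by induction on the complexity of a $\Delta_0$ formula $\varphi(x_1,\ldots,x_n)$ one constructs a term $T_\varphi$ in the G\"odel operations such that for all sets $A_1,\ldots,A_n$, $T_\varphi(A_1,\ldots,A_n) = \{(a_1,\ldots,a_n) \in A_1 \times \cdots \times A_n : \varphi(a_1,\ldots,a_n)\}$. Closure of $N$ under G\"odel operations then yields $\Delta_0$-Separation inside $N$, and combined with transitivity gives absoluteness of all $\Delta_0$ formulas between $N$ and $V$. Stage two disposes of the easy axioms: Extensionality and Foundation from transitivity; Empty Set, Pairing, and Union from G\"odel closure; Infinity from $\omega \in N$.

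Stage three is the substantive part, namely Replacement. Given a formula $\psi$ defining a function $F$ in $N$ and a set $a \in N$, the image $F[a]$ is a subclass of $N$ and, by Replacement in $V$, a set in $V$. Almost universality yields $z \in N$ with $F[a] \subseteq z$. To extract $F[a]$ as an element of $N$, I would reflect $\psi$ down to a transitive set $w \in N$ containing $a$ and $z$ and large enough to compute all relevant witnesses; such a $w$ is manufactured inside $N$ by alternately iterating G\"odel closure and invoking almost universality on the resulting subsets of $N$. Within $w$ the formula $\psi$ becomes $\Delta_0$, so the $\Delta_0$-Separation from Stage one isolates $F[a]$ as an element of $N$. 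Once Replacement holds in $N$, full Separation follows via the standard derivation $\{x \in a : \varphi(x)\} = \bigcup \{F(x) : x \in a\}$ with $F(x) = \{x\}$ when $\varphi(x)$ and $F(x) = \emptyset$ otherwise. Power Set follows from almost universality applied to $\mathcal{P}^N(x) = \{y \in N : y \subseteq x\}$, which is a set in $V$ because it is a subclass of $\mathcal{P}(x)$; Separation in $N$ then isolates it as an element of $N$.

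The main obstacle is Stage three: the defining formula $\psi$ of a putative functional class need not be $\Delta_0$, so $\Delta_0$-Separation cannot be invoked directly on $z$. The crucial point is the interplay between almost universality and G\"odel closure, which allows one to construct, inside $N$, a transitive set that reflects $\psi$; this is precisely the role almost universality is engineered to play, and it explains why these two conditions are exactly the right package to characterize transitive ZF-submodels.
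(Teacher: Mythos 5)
Your proof is correct and follows the standard argument; the paper does not prove this fact itself but simply cites Theorem~13.9 of Jech, and your outline (G\"odel normal form giving $\Delta_0$-Separation, the easy axioms from transitivity and closure, then Separation/Replacement via reflection to a set in $N$ obtained from almost universality) is exactly that proof. One hypothesis you use silently deserves to be made explicit, since the paper flags it immediately after the statement: both the claim that $F[a]$ is a set in $V$ and the construction of the reflecting set $w$ (collecting witnesses for existential subformulas of $\psi$ relativized to $N$) require the replacement/collection schema in $V$ for formulas of the language $\{\in,N\}$, which is an assumption on the class $N$ and not automatic when $N$ is not definable. Also, in manufacturing $w$ you should take a little care that the limit of your increasing chain $w_0\subseteq w_1\subseteq\cdots$ is itself an element of $N$ and still reflects $\psi$ --- the almost-universality cover of the union need not reflect, which is why the textbook argument reflects along the canonical hierarchy $N\cap V_\alpha$ rather than an ad hoc Skolem hull --- but this is a presentational point rather than a gap in the idea.
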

Note that the replacement schemes for the formulas of the language $\{\in, N\}$
is needed to prove this fact.

By Fact \ref{model}, for a transitive class $N \subseteq V$ containing all ordinals,
the statement ``$N$ is a model of ZFC'' is an  abbreviation of
the first order formula of the language $\{\in, N\}$ that
``$N$ is closed under the G\"odel operations''$+$
``$N$ is almost universal''$+$
``$N$ satisfies the axiom of choice''.

For a transitive model $M$ of ZFC and an ordinal $\alpha$,
let $M_\alpha$ be the set of all $x \in M$ with rank $<\alpha$.

We discuss the consistency strength of a hyper-huge cardinal.
Recall that an infinite cardinal $\ka$ is \emph{2-huge} if
there exists a transitive model $N$ of ZFC and an elementary embedding $j:V \to N$
with critical point $\ka$ such that $N$ is closed under $j(j(\ka))$-sequences.
\begin{lemma}
Suppose $\ka$ is 2-huge.
Then $V_\ka$ is a model of ZFC+``there are proper class many hyper-huge cardinals''.
Hence the consistency strength of the existence of a hyper-huge cardinal
is strictly weaker than of the existence of a 2-huge cardinal.
\end{lemma}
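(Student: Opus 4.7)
The plan is to extract from the 2-huge embedding a partial hyper-hugeness statement about $\ka$ and reflect it below $\ka$ using the reflection principle inherent in 2-hugeness. Fix $j:V\to N$ witnessing 2-hugeness of $\ka$, with $\crit(j)=\ka$, $j(\ka)=\la$, $j(\la)=\tau$, and $N^\tau\subseteq N$; in particular, $V_{\tau+1}^V=V_{\tau+1}^N$. For each cardinal $\mu$ with $\ka<\mu<\la$, the restriction $j\restriction V_{\mu+\om}$ encodes a $(\ka,j(\mu))$-extender $E_\mu$ of rank $<\tau$. The ultrapower by $E_\mu$ has critical point $\ka$, sends $\ka$ to $\la>\mu$, and is closed under $j(\mu)$-sequences, so $E_\mu$ witnesses the $\mu$-hyper-hugeness of $\ka$. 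Hence $V$ satisfies the first-order statement
\[
 \psi(\ka,\la,\tau)\ :\ \forall \mu \in (\ka,\la)\ \exists E \in V_\tau\ [\,E\text{ witnesses }\mu\text{-hyper-huge of }\ka\,].
\]

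The next step is to reflect $\psi$ down below $\ka$. Using the standard 2-huge ultrafilter $U$ derived from $j$ (with seed $j[\tau]$), one obtains for $U$-almost every $a$ the reflection $V\models\psi(\ka_a,\la_a,\tau_a)$, where $\ka_a,\la_a,\tau_a$ are the relevant order types extracted from $a$, and $\ka_a<\la_a<\tau_a<\ka$. Normality of $U$ pins $\ka_a$ to a single value $\alpha^*<\ka$ on a measure-one set, while fineness forces $\la_a$ and $\tau_a$ to exceed any prescribed $\mu<\ka$ on measure-one sets. Intersecting, for each $\mu<\ka$ there is a choice of $a$ with $\ka_a=\alpha^*$, $\la_a>\mu$, and $\tau_a<\ka$, producing a witness extender in $V_{\tau_a}\subseteq V_\ka$ for $\mu$-hyper-hugeness of $\alpha^*$. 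Thus $V_\ka\models\alpha^*$ is hyper-huge.

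For proper class many, the same argument localises above any prescribed $\beta<\ka$ by projecting $U$ onto sets containing $\beta$, producing hyper-huge $\alpha^*>\beta$ at each stage, so the hyper-huge cardinals are unbounded in $\ka$ from the viewpoint of $V_\ka$. The main technical obstacle is reconciling normality, which fixes $\ka_a$, with fineness, which drives $\la_a,\tau_a$ unboundedly below $\ka$, on a common measure-one set; this is standard 2-huge ultrafilter combinatorics, but the bookkeeping around the index set and the seed is where most of the care lies.
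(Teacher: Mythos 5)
Your plan shares the broad shape of the paper's argument---extract local witnesses from the 2-huge $j$ and reflect---but it diverges at two places where the paper does real work, and at both places there are genuine gaps.

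\textbf{Extenders are the wrong kind of witness.} You claim that the $(\ka,j(\mu))$-extender $E_\mu$ derived from $j\restriction V_{\mu+\om}$ has an ultrapower closed under $j(\mu)$-sequences. This does not follow: extender ultrapowers agree with $N$ up to the length of the extender via the factor map, but they are not in general closed under long sequences. (Also, to derive a $(\ka,j(\mu))$-extender you need $j$ on sets of rank around $j(\mu)$, which $j\restriction V_{\mu+\om}$ does not determine, since typically $j(\mu)\gg\mu+\om$.) The hyper-huge clause ``${}^{j(\la)}M\subseteq M$'' is a supercompactness-type closure, and the correct witness object is a normal fine ultrafilter over $\p(j(\mu))$ concentrating on the appropriate set, which is exactly what the paper uses (the relation $(\ka,\la)\to(\ka',\la')$ and its reformulation via a normal ultrafilter on $\p(\la')$).

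\textbf{The witnesses live too high up, and the paper's central reflection step is missing.} Even granting corrected witnesses, they sit in $V_{j(\la)}$, whereas to conclude $V_{j(\ka)}\models$ ``$\ka$ is hyper-huge'' (and then push down by elementarity) one needs witnesses \emph{inside} $V_{j(\ka)}$. The paper bridges this gap with an explicit reflection: $\ka$ is $j(\ka)$-supercompact, hence $j(\ka)$ is $j(j(\ka))$-supercompact in $N$ and in fact in $V$, and a standard reflection using this supercompactness pulls the relation $(\ka,\la)\to(j(\ka),j(\la))$ down to some $(\ka,\la)\to(\ka',\la')$ with $\ka',\la'<j(\ka)$. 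You have no analogue of this step; your attempt to jump directly below $\ka$ in one shot via the 2-huge ultrafilter does not compensate for it.

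\textbf{The single-step reflection is also internally inconsistent.} You assert that normality of $U$ pins $\ka_a$ to a fixed $\alpha^*<\ka$ on a measure-one set, and separately that you can push $\alpha^*$ above any $\beta<\ka$. The first claim is false: if $\ka_a$ is the order-type function that is supposed to ``reflect'' the critical point $\ka$, then $[a\mapsto\ka_a]_U=\ka$, which is not $j_U$ of any ordinal; so $\ka_a$ is precisely \emph{not} constant on a measure-one set. (That is why $\{\alpha<\ka:\alpha$ hyper-huge in $V_\ka\}$ can be unbounded at all.) The paper avoids this mess by first establishing $V_{j(\ka)}\models$ ``$\ka$ hyper-huge'' outright and then using the clean elementarity fact that $\ka\in j(A)$ for $A\subseteq\ka$ implies $A$ is unbounded in $\ka$. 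I would recommend adopting that two-stage structure: (i) get ultrafilter witnesses into $V_{j(\ka)}$ via the $j(j(\ka))$-supercompactness of $j(\ka)$; (ii) apply elementarity of $j$ once.
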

\begin{proof}
Let $j:V \to N$ be a 2-huge embedding with critical point $\ka$.
First we check that $V_{j(\ka)}$ is a model of ``$\ka$ is hyper-huge''.
To see this, take $\la$ with $\ka<\lambda<j(\ka)$.

Now we let $(\ka, \la) \to (\ka',\la')$ be the assertion that
there is an elementary embedding $j:V \to M$ for some $M$ such that:
\begin{enumerate}
\item the critical point of $j$ is $\ka$,
\item  $\la<j(\ka)=\ka'$,
\item $\la'=j(\la)$, and 
\item $M$ is closed under $\la'$-sequences.
\end{enumerate}
Note that $(\ka, \la) \to (\ka',\la')$ is equivalent to
the existence of a normal ultrafilter $U$ over $\p(\la')$ with
$\{x \subseteq \la':x \cap \ka \in \ka, \ot(x \cap \ka')=\ka,
\ot(x)=\la\} \in U$.

Since $j$ is a 2-huge embedding and $\lambda<j(\ka)$,
we have $j(\lambda)<j(j(\ka))$ and $(\ka, \lambda) \to (j(\ka), j(\lambda))$.
Now $\ka$ is $j(\ka)$-supercompact in $V$,
hence $j(\ka)$ is $j(j(\ka))$-supercompact in $N$, actually in $V$.
$j(\ka)<j(\lambda)<j(j(\ka))$, hence 
we have there is some $\ka',\lambda'<j(\ka)$ with
$(\ka, \lambda) \to (\ka', \lambda')$ in $V$ by
a standard reflection argument using the $j(j(\ka))$-supercompactness of $j(\ka)$.
This shows that $\ka$ is hyper-huge in $V_{j(\ka)}=N_{j(\ka)}$.
By the elementarity of $j$,
we have that $\{\alpha<\ka: \alpha$ is hyper-huge in $V_{\ka}\}$ is unbounded in $\ka$.
\end{proof}

A poset $\bbP$ is \emph{weakly homogeneous}
if, for every $p,q \in \bbP$,
there is an automorphism $f$ on $\bbP$ such that
$f(p)$ is compatible with $q$.


For an infinite cardinal $\chi$,
let $\calH(\chi)$ be the set of all sets $x$ such that
the transitive closure of $x$ has cardinality $<\chi$.
It is known that if $\chi$ is regular uncountable, then 
$\calH(\chi)$ is a transitive model of ZFC$-$P,
ZFC minus the power set axiom, with $\chi \subseteq \calH(\chi)$.


Here we recall some definitions and facts about set-theoretic geology.
See Fuchs-Hamkins-Reitz \cite{FHR} for more information.

\begin{define}
A transitive  model $M \subseteq V$ of ZFC is a \emph{ground} of $V$ if
there exists a poset $\bbP \in M$ and
an $(M, \bbP)$-generic filter $G \subseteq \bbP$ with
$M[G]=V$.
\end{define}
Note that $V$ is a ground of $V$ by the trivial forcing.

It is known that  all grounds can be defined uniformly.

\begin{fact}[Reitz \cite{Reitz}, Fuchs-Hamkins-Reitz \cite{FHR}]\label{1}
There is a first order formula $\varphi(x,r)$ of set-theory such that:
\begin{enumerate}
\item For each $r$, the (definable) class $W_r=\{x: \varphi(x,r)\}$
is a ground of $V$.
\item For every ground $M$ of $V$,
there is $r$ such that $M=W_r$.
\end{enumerate}
\end{fact}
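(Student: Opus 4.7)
The approach is to package the Laver--Woodin ground-definability theorem into a single uniform schema, indexed by a parameter. Laver--Woodin assert that for any ground $M$ of $V$ obtained via a poset $\bbP \in M_\delta$, the class $M$ is definable in $V$ from the single parameter $s = M_{\delta^+}$: there is a fixed first-order formula $\psi(x,s)$ such that $x \in M$ iff $V\models\psi(x,s)$. The construction of $\psi$ exploits the fact that the pair $(V,M)$ has the $\delta$-approximation and $\delta$-cover properties, and that these properties pin down $M$ uniquely once $M_{\delta^+}$ is given.

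First I would produce the Laver--Woodin formula $\psi$ explicitly. Then, to ensure that $W_r$ is a ground for \emph{every} $r$, I would introduce a first-order admissibility predicate $\mathrm{Adm}(r)$ on parameters, asserting: $r$ codes a pair $(\delta, P)$ with $\delta$ an uncountable cardinal and $P$ transitive; the class $W := \{x : \psi(x,r)\}$ is closed under G\"odel operations, almost universal, satisfies AC, and has $W_{\delta^+} = P$; and there exist a poset $\bbQ \in P$ and a set $G \subseteq \bbQ$ in $V$ such that $G$ is $W$-generic and $V = W[G]$. Each clause is expressible in $V$ using $r$ as the only parameter, because membership in $W$ is first-order via $\psi$. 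Then set
\[
\varphi(x,r) \;:=\; \bigl(\mathrm{Adm}(r)\wedge\psi(x,r)\bigr)\,\vee\,\neg\mathrm{Adm}(r),
\]
so that $W_r = V$ whenever $r$ is not admissible.

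Verification then proceeds as follows. For clause~(1): if $\mathrm{Adm}(r)$ fails, $W_r = V$ is a ground by trivial forcing; if $\mathrm{Adm}(r)$ holds, the conjunction ``closed under G\"odel operations $+$ almost universal $+$ AC'' combined with Fact~\ref{model} makes $W_r$ a transitive model of ZFC, and the final existential clause of $\mathrm{Adm}$ directly certifies that $W_r$ is a ground. For clause~(2): given an arbitrary ground $M$ realized by $\bbP \in M_\delta$ with generic $G$, choose $\delta$ so that $\bbP \in M_\delta$ and set $r = (\delta, M_{\delta^+})$; the witness $M$ itself verifies $\mathrm{Adm}(r)$, and the uniqueness part of the Laver--Woodin theorem gives $M = \{x:\psi(x,r)\} = W_r$.

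The main obstacle is the Laver--Woodin ingredient itself: extracting a uniform first-order formula $\psi(x,s)$ that reconstructs a ground from its $(\delta^+)$-snapshot using the approximation and cover properties, and verifying that any inner model $W\subseteq V$ satisfying the admissibility conditions actually arises as the ground of a set forcing producing $V$. Once this core result of \cite{Laver2} and \cite{Reitz} is in place, the work above is essentially bookkeeping: writing each clause of $\mathrm{Adm}$ as a first-order sentence in $V$ with parameter $r$, and checking that the two-case definition of $\varphi$ assembles into a single formula with the required properties.
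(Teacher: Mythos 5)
Your proposal is correct and follows the same approach as the references the paper cites for this Fact (Reitz~\cite{Reitz}, Fuchs--Hamkins--Reitz~\cite{FHR}); the paper itself states it without proof. The admissibility-predicate case split, combined with Hamkins' uniqueness theorem (Fact~\ref{2}) and the covering/approximation properties of small-forcing grounds (Fact~\ref{3}), is exactly the standard mechanism for turning pointwise Laver--Woodin definability into a single uniform formula, and your deferral of the construction of $\psi$ --- which in the end requires localizing the candidate class $W$ to set-sized pieces $W\cap\calH(\theta^+)$, in the spirit of the proof of Corollary~\ref{approx}, so that no genuine class quantifier appears --- correctly flags that as the technical core.
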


For this formula $\varphi$, the collection $\{W_r:r$ is a set$\}$
consists of the grounds of $V$.

\begin{define}
The {\it downward directed grounds hypothesis}, DDG,
is the assertion that, the grounds are downward directed,
that is, for every $r$ and $s$, there is $t$ such that
$W_t \subseteq W_r \cap W_s$.
The strong DDG is the assertion that,
for every set $X$,
there is $s$ such that
$W_s \subseteq \bigcap_{r \in X} W_r$.
\end{define}

Next we explicitly define how to count the grounds.

\begin{define}
We say that $V$ has \emph{only set many grounds}
if there is a set $X$ such that the collection 
$\{W_r:r \in X\}$ consists of the grounds of $V$,
that is, for every $s$, there is $r \in X$ with $W_r=W_s$.
If there is no such $X$, then $V$ has \emph{proper class many grounds}.
If $\size{X} <\ka$ ($\le \ka$, respectively), then $V$ has \emph{only $<\ka$ many grounds} (\emph{$\ka$ many grounds}, respectively).

\end{define}

\begin{define}[Fuchs-Hamkins-Reitz \cite{FHR}]
The \emph{mantle}, denoted by $\M$, is
the class $\bigcap\{W_r:r$ is a set$\}$.
\end{define}

The mantle is transitive and contains all ordinals.
We will show that the mantle must be a model of ZFC.

\begin{note}
If a poset $\bbP$ is weakly homogeneous and $G$ is $(V, \bbP)$-generic,
then the mantle of $V[G]$, $\M^{V[G]}$, is the same as the class
$\{x \in V:$\,$\Vdash_\bbP$``$x \in \M^{V[\dot G]}$''$\}$ defined in $V$
(where $\dot G$ is the canonical name for the generic filter).
Hence we can let $\M^{V^{\bbP}}$ denote
the mantle of a forcing extension of $V$ by $\bbP$.
\end{note}

\begin{define}[Fuchs-Hamkins-Reitz \cite{FHR}]
The \emph{generic mantle}, denoted by $g\M$,
is the class 
$\bigcap \{\M^{V^{\col(\theta)}}:\theta$ is an ordinal $\}$.
\end{define}

\begin{fact}[\cite{FHR}]\label{gm}
\begin{enumerate}
\item $g\M$ is the intersection of
all grounds of all forcing extensions of $V$.
\item $g\M$ is a transitive model of ZF containing all ordinals
\item $g\M$ is a forcing invariant class:
for every forcing extension $V[G]$ of $V$,
we have $g\M^V=g\M^{V[G]}$.
\end{enumerate}
\end{fact}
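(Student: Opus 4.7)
The plan is to prove the three parts in the order (1), (3), (2), since (1) provides the characterization of $g\M$ that drives the rest. For (1), the main tool is absorption of arbitrary set forcing into a L\'evy collapse. Given any ground $N$ of any forcing extension $V[G]$ of $V$, I would choose a regular $\theta$ large enough that $\col(\theta)$ absorbs the forcing yielding $G$; then there is a $\col(\theta)$-generic $H$ over $V$ with $V[G] \subseteq V[H] = V^{\col(\theta)}$, and $V[H]$ is itself a further set forcing extension of $V[G]$. Writing $V[G] = N[G']$, we have $V[H] = N[G' \ast \dot K]$ for some $\dot K \in N[G']$, so $N$ is a ground of $V^{\col(\theta)}$. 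Thus every ground of every forcing extension of $V$ is a ground of some $V^{\col(\theta)}$, while the converse is trivial. Since $\M^{V^{\col(\theta)}}$ is by definition the intersection of the grounds of $V^{\col(\theta)}$, this yields $g\M = \bigcap \{N : N \text{ is a ground of some forcing extension of } V\}$.

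Part (3) is then immediate from (1). Every forcing extension of $V[G]$ is also a forcing extension of $V$, giving $g\M^V \subseteq g\M^{V[G]}$. Conversely, any ground $N$ of a forcing extension of $V$ can, by absorbing both $G$ and the relevant generic into a common $V^{\col(\theta)}$ for large enough $\theta$, be realized as a ground of a forcing extension of $V[G]$, yielding the other inclusion.

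For (2), transitivity and containing all ordinals are immediate, and I would verify the remaining ZF conditions via Fact~\ref{model}. Each ground $N$ is a model of ZFC and hence closed under the G\"odel operations, a property preserved under arbitrary intersections. The delicate step will be almost universality: given $x \subseteq g\M$ with $x \subseteq V_\alpha$, I must find $y \in g\M$ with $x \subseteq y$. By rank-absoluteness, $V_\alpha^N = V_\alpha \cap N$ for every ground $N$, so $V_\alpha \cap g\M = \bigcap_N V_\alpha^N$ with $N$ ranging over grounds of forcing extensions. My plan is to exploit the directedness of forcing extensions: any two such extensions have a common further extension, and any ground of either remains a ground of the common one. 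Working inside a single sufficiently large $V^{\col(\theta)}$, the aim is to locate a ground $N_0$ of $V^{\col(\theta)}$ with $V_\alpha^{N_0} \subseteq V_\alpha^N$ for all other relevant $N$, so that $y = V_\alpha^{N_0}$ lies in every ground of every forcing extension of $V$. The main obstacle is obtaining such an $N_0$ without presupposing DDG; a natural fallback is to show by a direct absorption argument, using the uniform definability of grounds (Fact~\ref{1}), that the set $V_\alpha \cap g\M$ itself belongs to every relevant ground.
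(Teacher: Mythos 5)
The paper itself offers no proof of this statement; it is quoted as a known result of Fuchs--Hamkins--Reitz, so there is nothing internal to compare against. Judged on its own terms, your treatment of (1) and (3) is correct and is the standard argument: absorb any poset of size $\le\theta$ into $\col(\theta)$, so that every ground of every forcing extension is a ground of some $V^{\col(\theta)}$, and note that $\theta\mapsto \M^{V^{\col(\theta)}}$ is decreasing, which makes the two intersections in (1) coincide and gives (3) by comparing the tails of the intersections computed in $V$ and in $V[G]$.

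Part (2) is where the genuine gap sits, and it is exactly at the step you flag. Your primary plan --- locate a single ground $N_0$ of a large $V^{\col(\theta)}$ with $V_\alpha^{N_0}\subseteq V_\alpha^{N}$ for all relevant $N$ --- cannot be carried out at this point: it is a DDG-type directedness assertion, and the whole point of this paper is that the DDG is \emph{not} available as a black box here (moreover, even the strong DDG only produces a common ground below any \emph{set} of grounds, whereas you would need one below a possibly proper class of them). Your fallback is the right target statement, but it is left as a hope rather than an argument. The missing idea is to push the forcing invariance (3) \emph{downward}: if $N$ is a ground of a forcing extension $V[G]$, then $V[G]$ is a forcing extension of $N$ as well, so applying (3) inside $N$ and inside $V$ gives $g\M^{N}=g\M^{V[G]}=g\M^{V}$. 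Hence $g\M$ is a parameter-free definable class of every such $N$, and by (1) applied in $V[G]$ we have $g\M\subseteq N$; therefore $V_\alpha\cap g\M = V_\alpha^{N}\cap g\M^{N}$ is, by separation in $N$, a set belonging to $N$. Since this holds for every ground $N$ of every forcing extension, $V_\alpha\cap g\M\in g\M$, which is almost universality; together with closure under the G\"odel operations (preserved under intersections of transitive classes) and Fact \ref{model}, this yields (2). So you should reorder your dependencies so that (3), in this two-sided form, is the engine behind (2), rather than any minimality or directedness among the grounds.
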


\section{Grounds and generic extensions}
In this section, we discuss some basic properties of grounds and generic extensions.

The following useful fact is well-known.
See Lemma 15.43 in Jech \cite{Jech} for the proof.

\begin{fact}\label{2.31}
Let $M$, $N$ be transitive models of ZFC.
If there is a poset $\bbP \in M$ and an $(M, \bbP)$-generic $G$ 
with $M \subseteq N \subseteq M[G]$,
then there is a complete suborder $\bbQ \subseteq ro(\bbP)^M$ in $M$
and an $(M, \bbQ)$-generic $H$ such that 
$M[H]=N$, and $G$ is $(N, \mathrm{ro}(\bbP)^M/H)$-generic with
$M[G]=N[G]$,
where $\mathrm{ro}(\bbP)^M$ is the completion of $\bbP$ in $M$.
In particular, $M$ is a ground of $N$, and $N$ is a ground of $M[G]$.
\end{fact}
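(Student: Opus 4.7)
The plan is to invoke the classical intermediate model theorem of Solovay--Grigorieff (see Jech \cite{Jech}, Lemma 15.43). The first step is to pass from $\bbP$ to the Boolean completion $B:=\mathrm{ro}(\bbP)^M\in M$ and extend $G$ to the unique $M$-generic ultrafilter $\bar G$ on $B$, so that $M[G]=M[\bar G]$. The heart of the argument is to exhibit a complete subalgebra $\bbQ\in M$ of $B$ together with an $(M,\bbQ)$-generic filter $H$ with $M[H]=N$. Once this is in place, the first ``In particular'' is witnessed directly by $(\bbQ,H)$, and the second follows from standard quotient forcing: $B/H$ is a complete Boolean algebra in $N=M[H]$, and $\bar G$ descends to an $N$-generic ultrafilter on $B/H$ yielding $N[\bar G]=M[\bar G]=M[G]$.

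To build $(\bbQ,H)$: for each ordinal $\alpha$ the rank-initial segment $V_\alpha^N\in N\subseteq M[G]$ admits, in $M[G]$, a $B$-name $\tau_\alpha\in M$ with $(\tau_\alpha)_{\bar G}=V_\alpha^N$. Set
\[
S_\alpha=\{\va\check x\in\tau_\alpha\va^B:x\in V\}\subseteq B,
\]
a subset of $B$ definable from $\tau_\alpha$ alone, and let $\bbQ$ be the complete subalgebra of $B$ generated by $\bigcup_\alpha S_\alpha$. Put $H:=\bar G\cap\bbQ$. Genericity of $H$ over $M$ is automatic since $\bbQ$ is a complete subalgebra of $B$. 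The inclusion $N\subseteq M[H]$ follows by evaluating $\tau_\alpha$ with $H$: each $V_\alpha^N=\{x:\va\check x\in\tau_\alpha\va\in H\}$ is computable in $M[H]$ from the parameter $\tau_\alpha\in M$. The reverse inclusion $M[H]\subseteq N$ reduces to $H\in N$; but $N$ reads off $\bar G\cap S_\alpha=\{\va\check x\in\tau_\alpha\va:x\in V_\alpha^N\}$ directly from its own knowledge of $(\tau_\alpha,V_\alpha^N)$, and since an ultrafilter on a complete Boolean algebra is determined by its restriction to any generating set, $N$ recovers $H$ from $\bigcup_\alpha(\bar G\cap S_\alpha)$.

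The main obstacle I expect is verifying that the constructed $\bbQ$ genuinely lies in $M$ rather than only in $M[G]$: the family $\{\tau_\alpha\}_\alpha$ is chosen in $M[G]$ using data external to $M$, so although $\bbQ\subseteq B\in M$, it is not a priori in $P(B)^M$. Handling this requires a more careful presentation of the generating set via an internal $M$-description---for instance, by running the above construction over the (set-sized-at-each-rank) collection of all $B$-names $\sigma\in M$ and then using the absoluteness of Boolean operations in $B$ between $M$ and $M[G]$---so that $\bbQ$ coincides with an honest element of $P(B)^M$. With this technical point in hand, the rest of the argument consists of routine Boolean-valued-model manipulations, and the concluding statements that $M$ is a ground of $N$ and $N$ a ground of $M[G]$ follow immediately.
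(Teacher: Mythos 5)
The paper offers no proof of this fact; it simply cites Lemma 15.43 of Jech, the classical Solovay--Grigorieff intermediate model theorem. You attempt to reprove that theorem, and your general strategy (a completely generated subalgebra $\bbQ$, $H=\bar G\cap\bbQ$, then quotient forcing for the second half) is the right one, but the core construction as you set it up does not work. The set $S_\alpha=\{\va\check x\in\tau_\alpha\va^B : x\in V\}$ only makes sense for $x\in M$, since $\check x$ is an $M$-name only for $x\in M$; and for $x\in M$ one has $\va\check x\in\tau_\alpha\va\in\bar G$ if and only if $x\in V_\alpha^N\cap M=V_\alpha^M$. So the trace of $\bar G$ on your generating set is already computable inside $M$, and your own ``an ultrafilter is determined by its restriction to a generating set'' argument then shows $H\in M$, i.e.\ $M[H]=M$. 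In particular the claimed identity $V_\alpha^N=\{x:\va\check x\in\tau_\alpha\va\in H\}$ is false: the right-hand side can contain only elements of $M$, so you recover $V_\alpha^M$ rather than $V_\alpha^N$, and the inclusion $N\subseteq M[H]$ is never established. The real difficulty is not, as you suggest at the end, whether $\bbQ$ lies in $M$; it is that check-name approximations of $\tau_\alpha$ carry no information about the elements of $N$ that are not in $M$.

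The standard repair, which is the actual content of Jech's Lemma 15.43, is to first prove that $N=M[A]$ for a single set $A$ of \emph{ordinals}: one shows $N=M[\p(\kappa)^N]$ for $\kappa$ sufficiently large relative to $|B|^M$, and then codes $\p(\kappa)^N$ by a set of ordinals using the axiom of choice in $N$. Because $A\subseteq\mathrm{Ord}\subseteq M$, the subalgebra completely generated by $\{\va\check\alpha\in\dot A\va : \alpha\in\mathrm{Ord}\}$ for a single name $\dot A\in M$ genuinely determines $A$ from $H=\bar G\cap\bbQ$, and conversely $N$ computes $H$ from $A$ by the induction on generation rank that you describe. This also dissolves your worry about $\bbQ\in M$, since only one name is involved rather than a class-length family of $\tau_\alpha$'s. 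The reduction to a single set of ordinals is the missing idea and is where the real work of the theorem lies; since the paper itself just cites the result, the cleanest course is to do the same rather than to reprove it.
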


We define the covering and the approximation properties, which
are important tools to investigate grounds and generic extensions.

\begin{define}[Hamkins \cite{Hamkins}]
Let $M \subseteq V$ be a transitive  model of ZFC containing all ordinals.
Let $\ka$ be an infinite cardinal.
\begin{enumerate}
\item 
We say that $M$ satisfies the \emph{$\ka$-covering property} for $V$
if, for every set $A$ of ordinals with $\size{A}<\ka$,
there is a set  $B \in M$ of ordinals such that $\size{B}<\ka$ and $A \subseteq B$.
\item We say that $M$ satisfies the \emph{$\ka$-approximation property} for $V$
if, for every set $A$ of ordinals, if
$A \cap x \in M$ for every set $x \in M$ of ordinals with $\size{x}<\ka$,
then $A \in M$.
\end{enumerate}
\end{define}

\begin{note}
\begin{enumerate}
\item $M$ satisfies the $\ka$-covering property for $V$
if and only if,
for every set $x \subseteq M$ with size $<\ka$,
there is $y \in M$ such that $x \subseteq y$ and $\size{y}<\ka$.
\item $M$ satisfies the $\ka$-approximation property for $V$
if and only if,
for every set $A \subseteq M$, if
$A \cap x \in M$ for every set $x \in M$ with $\size{x}<\ka$ then $A \in M$.
\end{enumerate}
\end{note}


The following fact is key to the definition of the grounds as in Fact \ref{1}.
See Laver \cite{Laver2} for a proof, and
in Lemma \ref{4.3.111}, we will redo 
Laver's proof for models of ZFC$-$P.
\begin{fact}[Hamkins]\label{2}
Let $\ka$ be an uncountable  cardinal.
For transitive models $M,N\subseteq V$ of ZFC containing all ordinals,
suppose $M$ and $N$ satisfy the following properties:
\begin{enumerate}
\item $M$, $N$ satisfy the $\ka$-covering and the $\ka$-approximation properties
for $V$.
\item $\p(\ka)^M=\p(\ka)^N$.
\item $(\ka^+)^M=(\ka^+)^N=\ka^+$.
\end{enumerate}
Then $M=N$.
\end{fact}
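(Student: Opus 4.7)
My plan is to establish $M = N$ by proving $\p(\alpha)^M = \p(\alpha)^N$ for every ordinal $\alpha$. Since both $M$ and $N$ are transitive and contain all ordinals, and every set is absolutely encoded by a well-founded extensional relation on an ordinal via Mostowski collapse, this agreement on powersets of ordinals suffices to conclude $M = N$. The argument is a transfinite induction on $\alpha$: the base case $\alpha \le \kappa$ is hypothesis (2), successors are immediate, and the real work occurs at limit stages.

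At a limit $\alpha$, given $A \in M \cap \p(\alpha)$, the $\kappa$-approximation property for $N$ reduces the task to showing $A \cap z \in N$ for every $z \in N$ with $\size{z}^N < \kappa$. I would split on $\cf(\alpha)^V$. When $\cf(\alpha)^V \ge \kappa$, any such $z$ satisfies $\size{z}^V < \kappa$, so $z \cap \alpha$ is bounded in $\alpha$, say $z \cap \alpha \subseteq \beta < \alpha$, and $A \cap z = (A \cap \beta) \cap z \in N$ by the induction hypothesis applied to $\beta$. This case covers $\alpha = \kappa^+$ itself (since $\kappa^+$ is regular in $V$ by hypothesis (3)) and every $\alpha$ of $V$-cofinality at least $\kappa$.

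When $\cf(\alpha)^V < \kappa$ the bounded reduction fails, because $z$ may be cofinal in $\alpha$. Here I would apply the $\kappa$-covering of $V$ by $M$ to enlarge $z$ to some $y \in M$ with $z \subseteq y$ and $\size{y}^M < \kappa$, so that $A \cap y \in M$ has $M$-cardinality $<\kappa$ and $A \cap z = (A \cap y) \cap z$. The problem then reduces to transferring the small $M$-set $A \cap y$ into $N$. As a preparatory step, hypotheses (2) and (3) give $\calH(\kappa^+)^M = \calH(\kappa^+)^N$: elements of $\calH(\kappa^+)^M$ are coded by well-founded extensional relations on $\kappa$, which via the absolute G\"odel pairing $\kappa \times \kappa \leftrightarrow \kappa$ are subsets of $\kappa$, so the codes lie in $\p(\kappa)^M = \p(\kappa)^N$ and decoding (Mostowski) is absolute.

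The hard part is exactly the cofinality-$<\kappa$ transfer. The difficulty is that a small set of ordinals in $M$ need \emph{not} be hereditarily small: its elements may be large ordinals, so its transitive closure can be large and the agreement $\calH(\kappa^+)^M = \calH(\kappa^+)^N$ does not directly carry it into $N$. Overcoming this is the technical heart of Laver's argument and requires the joint cooperation of $\kappa$-covering from both $M$ and $N$, $\kappa$-approximation from both $M$ and $N$, the $\p(\kappa)$-agreement, and the preservation of $\kappa^+$, typically organized as a simultaneous induction on $\alpha$ tracking both the powerset agreement and the transfer of small sets of ordinals. I would expect essentially all of the real work to concentrate in this single transfer step, with the remainder of the induction amounting to routine bookkeeping.
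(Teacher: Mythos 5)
The paper's own argument (spelled out for the ZFC$-$P variant in Lemma~\ref{4.3.111}, which it declares to be a redoing of Laver's proof of Fact~\ref{2}) does not perform a transfinite induction on $\alpha$ with a cofinality case split at all. Instead it isolates and proves once and for all the ``small set transfer'' that you explicitly flag as the technical heart and then leave unproven: it shows $[\chi]^{<\ka} \cap M = [\chi]^{<\ka} \cap N$ by a double-chain construction. Given $A \in [\chi]^{<\ka} \cap M$, one builds $\subseteq$-increasing sequences $x_i \in M$, $y_i \in N$ of sets of size $<\ka$, alternately using $\ka$-covering in $M$ and in $N$ so that $A \subseteq x_i \cup y_i \subseteq x_{i+1} \cap y_{i+1}$. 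The union $B = \bigcup_i x_i = \bigcup_i y_i$ has size $\ka$ and lies in both models by the $\ka$-approximation property (applied to each model separately). Collapsing $B$ to its order type $\delta < \ka^+$ and using $\p(\delta)^M = \p(\delta)^N$ (which you correctly derive from (2) and (3)) moves $A$ from $M$ to $N$. Once this transfer lemma is in hand, the rest is immediate: any $A \subseteq \chi$ in $M$ has all its $<\ka$-sized approximations $A \cap x$ in $[\chi]^{<\ka} \cap M = [\chi]^{<\ka} \cap N$, so $A \in N$ by $N$'s approximation property, uniformly and with no cofinality cases.

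Your proposal correctly identifies every ingredient --- covering and approximation from both models, $\p(\ka)$-agreement extended to $\p(\delta)$ for $\delta < \ka^+$, preservation of $\ka^+$, and the observation that small sets of ordinals need not be hereditarily small so $\calH(\ka^+)$-agreement alone does not suffice. But the sentence beginning ``Overcoming this is the technical heart of Laver's argument and requires the joint cooperation\dots'' is not an argument; it is a statement that an argument is needed. This is precisely where the proof lives, and you have not supplied the double-chain (or any other) mechanism that actually closes it. As a secondary point, the induction-on-$\alpha$-with-cofinality-cases scaffolding is an avoidable complication: once the transfer of $<\ka$-sized sets of ordinals is established globally, one pass of the approximation property gives $\p(\alpha)^M = \p(\alpha)^N$ for all $\alpha$ simultaneously, with no need to distinguish $\cf(\alpha)^V \geq \ka$ from $\cf(\alpha)^V < \ka$.
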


The following is  Lemma 13 in Hamkins \cite{Hamkins}
(and see  also Mitchell \cite{Mitchell}).

\begin{fact}\label{3}
Suppose $M$ is a ground of $V$, and $M[G]=V$ for some 
$\bbP \in M$ and $(M, \bbP)$-generic $G$.
Let $\ka$ be an infinite cardinal.
If $\size{\bbP}^M<\ka$, 
then $M$ satisfies the $\ka$-covering and the $\ka$-approximation properties for $V$.
\end{fact}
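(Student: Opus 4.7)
Let $\mu = |\bbP|^M$. Since $\mu < \ka$, $\bbP$ has the $\mu^+$-cc in $M$, and in particular the $\ka$-cc. I would establish the two properties separately.

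For $\ka$-covering, I use the standard $\ka$-cc argument. Given $A \subseteq \mathrm{Ord}$ in $V$ with $|A|^V < \ka$, fix a bijection $h : |A|^V \to A$ in $V$ and a $\bbP$-name $\dot h \in M$. For each $\xi < |A|^V$, the set
\[
B_\xi = \{\alpha : \exists p \in \bbP,\ p \Vdash \dot h(\check\xi) = \check\alpha\}
\]
lies in $M$ and, by the $\mu^+$-cc, has $|B_\xi|^M \leq \mu$. Then $B = \bigcup_{\xi < |A|^V} B_\xi \in M$ covers $A$ with $|B|^M \leq |A|^V \cdot \mu < \ka$.

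For $\ka$-approximation, suppose $A \subseteq \mathrm{Ord}$ satisfies $A \cap x \in M$ for every $x \in M$ with $|x|^M < \ka$. Fix a $\bbP$-name $\dot A \in M$, enumerate $\bbP = \{p_i : i < \mu\}$ in $M$, and set $D_i = \{\alpha : p_i \Vdash \check\alpha \in \dot A\} \in M$. The truth lemma and the observation that $p_i \in G$ forces $D_i \subseteq \dot A$ give $A = \bigcup\{D_i : D_i \subseteq A\}$; setting $I^* = \{i < \mu : D_i \subseteq A\}$, it suffices to show $I^* \in M$.

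The crux is the construction of a small ``distinguishing set'' $X \in M$ whose intersection with $A$ determines each $A \cap D_i$. In $M$, the name $\dot A \cap \check D_i$ is decided by a maximal antichain of $\bbP$ of size $\leq \mu$ (by the $\mu^+$-cc), so the set $V_i \in M$ of its possible values has $|V_i|^M \leq \mu$. For each pair of distinct $v, v' \in V_i$ choose in $M$ a witness $\alpha_{i, v, v'} \in v \triangle v'$, and set
\[
X = \bigcup_{i < \mu} \{\alpha_{i, v, v'} : v, v' \in V_i,\ v \neq v'\} \in M.
\]
Then $|X|^M \leq \mu < \ka$, so $A \cap X \in M$ by the hypothesis. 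For each $i$, $(A \cap D_i) \cap X = (A \cap X) \cap D_i \in M$, and the choice of witnesses makes $V_i \ni v \mapsto v \cap X$ injective; hence the unique $v \in V_i$ with $v \cap X = (A \cap X) \cap D_i$ is identified in $M$ and equals $A \cap D_i$. Thus $A \cap D_i \in M$ uniformly in $i$, whence $I^* = \{i : A \cap D_i = D_i\} \in M$ and $A = \bigcup_{i \in I^*} D_i \in M$. I expect the main difficulty to be in setting up $X$: the $\mu^+$-cc is used first to bound $|V_i|^M \leq \mu$ and then to keep $|X|^M$ below $\ka$ so that the approximation hypothesis can be applied.
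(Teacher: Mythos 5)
The paper does not prove this fact; it simply cites it as Lemma~13 of Hamkins \cite{Hamkins} (with a pointer to Mitchell \cite{Mitchell}), so I am comparing your attempt against the standard argument rather than a proof in the paper. Your covering argument is correct and is exactly the usual chain-condition argument.

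The approximation argument has a genuine gap at the sentence ``the name $\dot A\cap\check D_i$ is decided by a maximal antichain of $\bbP$ of size $\le\mu$.'' For such an antichain to exist, the conditions that decide the \emph{entire set} $\dot A\cap\check D_i$ would have to be dense, and nothing guarantees this: $D_i=\{\alpha: p_i\Vdash\check\alpha\in\dot A\}$ is the set of ``sure members'' committed to by $p_i$, and there is no bound on its size. In particular $|D_i|$ can be $\ge\ka$, in which case the approximation hypothesis on $A$ tells you nothing about $A\cap D_i$; the actual value $A\cap D_i$ then need not lie in your set $V_i$ of ``decided values,'' and the step identifying ``the unique $v\in V_i$ with $v\cap X=(A\cap X)\cap D_i$'' breaks down. (The $\mu^+$-cc does bound the number of decided values, but it does not make $A\cap D_i$ one of them.)

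Your overall plan --- build a small distinguishing set $X\in M$, feed $A\cap X\in M$ into the approximation hypothesis, recover the right index set, and conclude $A=\bigcup_{i\in I}D_i$ --- is the correct one; only the construction of $X$ needs repair. Build $X$ from \emph{pairs of conditions}, not from values of $\dot A\cap D_i$: for every pair $(p,q)\in\bbP\times\bbP$ for which some $\alpha$ satisfies $p\Vdash\check\alpha\in\dot A$ and $q\Vdash\check\alpha\notin\dot A$, pick one such witness $\alpha_{p,q}$, and put $X=\{\alpha_{p,q}: (p,q)\}$; then $|X|\le\mu<\ka$ and $X\in M$, so $A\cap X\in M$. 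Set $I=\{i: D_i\cap X\subseteq A\cap X\}$, which is computable in $M$ from $X$ and $A\cap X$. If $p_i\in G$ then $D_i\subseteq A$, so $i\in I$. Conversely if $i\in I$ and $\alpha\in D_i\setminus A$, pick $q\in G$ with $q\Vdash\check\alpha\notin\dot A$; the pair $(p_i,q)$ is admissible, so $\alpha_{p_i,q}\in D_i\cap X\subseteq A\cap X\subseteq A$, while $q\in G$ forces $\alpha_{p_i,q}\notin A$ --- a contradiction. Hence $D_i\subseteq A$ for all $i\in I$, and $A=\bigcup_{i\in I}D_i\in M$.
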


\begin{lemma}\label{3.5}
Suppose $M, N$ are grounds of $V$.
Let $\bbP \in M \cap N$ be a poset,
$G$ be an $(M, \bbP)$-generic, and $H$ be an $(N,\bbP)$-generic
such that $V=M[G]=N[H]$.
If $\la=\size{\bbP}^M=\size{\bbP}^N$ and $\p(\la)^M=\p(\la)^N$,
then $M=N$.
\end{lemma}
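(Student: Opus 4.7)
The plan is to apply Fact \ref{2} at $\ka = \la^+$, so I must verify its three hypotheses for this choice. We may assume $\la$ is infinite, otherwise $\bbP$ is trivial and $M = V = N$. Two hypotheses come nearly for free. Since $|\bbP|^M = |\bbP|^N = \la$, the poset $\bbP$ has the $\la^+$-chain condition in each of $M$ and $N$, so cardinals $\geq \la^+$ are preserved in $V = M[G] = N[H]$; in particular $(\la^+)^M = (\la^+)^N = \la^+$ and $(\la^{++})^M = (\la^{++})^N = \la^{++}$, giving item (3). Applying Fact \ref{3} with $\ka = \la^+$, and noting $|\bbP|^M, |\bbP|^N < \la^+$, both $M$ and $N$ satisfy the $\la^+$-covering and $\la^+$-approximation properties for $V$, which is item (1).

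The real content is to upgrade the given equality $\p(\la)^M = \p(\la)^N$ to $\p(\la^+)^M = \p(\la^+)^N$, which is item (2). My intermediate claim is $\calH(\la^+)^M = \calH(\la^+)^N$. Given $u \in \calH(\la^+)^M$, set $T = \mathrm{tc}(\{u\}) \in M$ with $|T|^M \leq \la$, choose a bijection $T \to \alpha$ in $M$ with $\alpha = |T|^M$, and let $E$ be the well-founded extensional relation on $\alpha$ induced by $\in$ on $T$ via this bijection. Through a definable absolute bijection $\la \times \la \to \la$ (G\"odel pairing), $E$ becomes a subset of $\la$ in $M$, hence in $N$ by hypothesis. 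Well-foundedness and the Mostowski collapse are absolute for transitive models, so $N$ can reconstruct $T$ and therefore $u$. The reverse inclusion is symmetric.

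Finally, let $A \in \p(\la^+)^M$. By the $\la^+$-approximation property of $N$, it suffices to show $A \cap x \in N$ for every set of ordinals $x \in N$ of cardinality $\leq \la$; we may take $x \subseteq \la^+$. The $\la^+$-covering of $M$ yields $y \in M$ with $x \subseteq y \subseteq \la^+$ and $|y|^M \leq \la$. Every ordinal below $\la^+$ has $M$-cardinality $\leq \la$, so $y$ has $M$-hereditary cardinality $\leq \la$, and likewise $A \cap y$, so $y, A \cap y \in \calH(\la^+)^M = \calH(\la^+)^N \subseteq N$. Then $A \cap x = (A \cap y) \cap x \in N$. By symmetry, $\p(\la^+)^M = \p(\la^+)^N$, and Fact \ref{2} concludes $M = N$. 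The main obstacle is the bootstrapping step: converting the hypothesized $\p(\la)$ agreement into $\calH(\la^+)$ agreement via absoluteness of G\"odel pairing and Mostowski collapse; once that is secured, the remaining climb to $\p(\la^+)$ is a routine application of the covering and approximation properties established earlier.
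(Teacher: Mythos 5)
Your proposal is correct and follows essentially the same route as the paper's proof: both establish conditions (1) and (3) of Fact \ref{2} from the $\la^+$-c.c.\ and Fact \ref{3}, then bootstrap the hypothesis $\p(\la)^M=\p(\la)^N$ through a coding argument (your $\calH(\la^+)^M=\calH(\la^+)^N$, the paper's equivalent $([\ka]^{<\ka})^M=([\ka]^{<\ka})^N$) and the $\ka$-approximation property to obtain $\p(\ka)^M=\p(\ka)^N$. You spell out the well-founded-relation coding the paper treats as immediate, and your last step makes a small unnecessary detour through $\ka$-covering of $M$ (one can instead note that $x\cap\la^+\in\calH(\la^+)^N=\calH(\la^+)^M$ already lies in $M$), but the argument is sound and structurally identical.
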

\begin{proof}
Let $\ka=\la^+$.
Note that $\ka=(\la^+)^M=(\la^+)^N$.
Moreover $([\ka]^{<\ka})^M=([\ka]^{<\ka})^N$
because $\p(\la)^M=\p(\la)^N$.

In order to prove $M=N$,
by Fact \ref{2}, it is sufficient to check that
the conditions (1)--(3) in Fact \ref{2} hold for $M$ and $N$.
(1) follows from Fact \ref{3.5}, and (3) is clear.

For (2), 
we prove $\p(\ka)^M \subseteq \p(\ka)^N$.
The converse direction follows from the same argument.
Fix $A \in \p(\ka)^M$. By the $\ka$-approximation property of $N$,
it is sufficient to see that $A \cap x \in M$ for every 
$x \in ([\ka]^{<\ka})^M$,
and this is immediate because
$([\ka]^{<\ka})^M=([\ka]^{<\ka})^N$.
\end{proof}




Next we define a strong version of the covering property.
\begin{define}
Let $\ka$ be an infinite cardinal, and $M \subseteq V$ be a
transitive model of ZFC containing all ordinals.
We say that $M$ satisfies the \emph{$\ka$-uniform covering property}\footnote{
In Friedman-Fuchino-Sakai \cite{FFS},
this property is referred as ``$M$ $\ka$-globally coves $V$''.}
 for $V$
if, for every ordinal $\alpha$ and every function $f:\alpha \to ON$,
there is a function $F \in M$ such that $\dom(F)=\alpha$, 
$f(\beta) \in F(\beta)$ and $\size{F(\beta)}<\ka$ for all $\beta <\alpha$.
\end{define}

\begin{note}
Suppose $M$ satisfies the $\ka$-uniform covering property for $V$.
\begin{enumerate}
\item For every cardinal $\la>\ka$, $M$ satisfies the $\la$-uniform covering property
for $V$.
\item If $\ka$ is regular, then $M$ satisfies the $\ka$-covering property for $V$.
\item If $\ka$ is regular,
then $M$ also satisfies a slightly stronger property:
for every ordinal $\alpha$ and every function $f:\alpha \to [M]^{<\ka}$,
there is a function $F \in M$ such that $\dom(F)=\alpha$, 
$f(\beta) \subseteq F(\beta)$, and $\size{F(\beta)}<\ka$ for all $\beta <\alpha$.
\end{enumerate}
\end{note}

The following theorem of Bukovsk\'y is very useful. It shows that 
a model of ZFC is a ground if and only if it has the uniform covering property.

\begin{fact}[Bukovsk\'y \cite{Bukovsky}]\label{Bukovsky}
Suppose $M \subseteq V$ is a transitive  model of ZFC containing all ordinals.
Let $\ka$ be a regular uncountable cardinal.
Then the following are equivalent:
\begin{enumerate}
\item $M$ satisfies the $\ka$-uniform covering property for $V$.
\item There is a poset $\bbP \in M$ and an $(M,\bbP)$-generic $G$ such that
$\bbP$ satisfies the $\ka$-c.c. in $M$, and $M[G]=V$.
\end{enumerate}
\end{fact}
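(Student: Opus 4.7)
The direction $(2) \Rightarrow (1)$ is the standard antichain argument. Assume $V = M[G]$ for some $\bbP \in M$ that is $\kappa$-c.c.\ in $M$. Given $f : \alpha \to \mathrm{ON}$ in $V$ with $\bbP$-name $\dot f \in M$, work inside $M$ and, for each $\beta < \alpha$, choose a maximal antichain $A_\beta \subseteq \bbP$ whose conditions decide $\dot f(\check\beta)$; by the $\kappa$-c.c., $\size{A_\beta}^M < \kappa$. Setting $F(\beta) = \{\gamma : (\exists p \in A_\beta)\, p \Vdash \dot f(\check\beta) = \check\gamma\}$ produces $F \in M$ with $\dom(F) = \alpha$, $\size{F(\beta)}^M < \kappa$, and $f(\beta) \in F(\beta)$, which witnesses the $\kappa$-uniform covering property.

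For the converse $(1) \Rightarrow (2)$, my plan is to construct explicitly a $\kappa$-c.c.\ Boolean algebra $\mathbb{B} \in M$ together with a $V$-definable generic filter whose associated extension of $M$ equals $V$. As a preparatory step, I would first derive from the $\kappa$-uniform covering that $M$ satisfies the $\kappa$-covering and the $\kappa$-approximation properties for $V$: covering is immediate by applying uniform covering to an enumeration of a given set of ordinals, while approximation follows by applying uniform covering to the characteristic function of a candidate set and then reconstructing it from the small pieces supplied by the approximation hypothesis. It is also useful to upgrade the uniform covering to functions $f : \alpha \to M$ by composing with a definable global well-order of $M$ in $M$, so that covers take values in small $M$-sets rather than just in ordinals.

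The heart of the plan is then to define $\mathbb{B}$ in $M$ so that its elements correspond to ``$\kappa$-small pieces of information'' about $V$ over $M$, with the $\kappa$-chain condition coming directly from the uniformity of the covering. Concretely, for a sufficiently large ordinal $\theta$, take a $V$-bijection $h : \theta \to V_\theta^V$ and an $M$-cover $H \in M$ for $h$ with $\size{H(\beta)}^M < \kappa$; then let $\mathbb{B}$ be the $M$-completion of the poset of $(<\lambda)$-sized partial selections from the $H(\beta)$, for a suitable support cardinal $\lambda < \kappa$. A $\Delta$-system argument should give the $\kappa$-c.c.\ of $\mathbb{B}$, and the filter $G \in V$ corresponding to $h$ should meet every $M$-dense subset of $\mathbb{B}$ by a direct density argument using $H$; then $M[G] \supseteq V_\theta^V$, and choosing $\theta$ large enough would give $V = M[G]$.

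The principal obstacles are twofold. First, one must verify that a single $\theta$ suffices to give $V = M[G]$ rather than only $V_\theta^V \subseteq M[G]$; this requires showing that $V$ is generated over $M$ by a single set of ordinals, which is a non-trivial claim that exploits the uniformity of the covering (a merely pointwise cover would not suffice) and likely uses a reflection argument based on the derived $\kappa$-approximation and $\kappa$-covering properties. Second, the choice of support cardinal $\lambda$ and the verification of $\kappa$-c.c.\ are delicate: a naive finite-support product directly yields only c.c.c., which would force $M$ to satisfy $\omega_1$-uniform covering (too strong in general), whereas larger supports risk losing the chain condition. Navigating these subtleties, and in particular showing that the $V$-side choice $h$ actually determines an $(M, \mathbb{B})$-generic filter, is the technical heart of the argument and is where the full strength of Bukovsk\'y's insight is used.
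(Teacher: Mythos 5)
First, a point of reference: the paper does not prove this statement at all --- it is quoted as a black-box \emph{Fact} due to Bukovsk\'y, with pointers to Friedman--Fuchino--Sakai (who prove it via infinitary logic) and Schindler (via the extender algebra). So there is no ``paper proof'' to match; your proposal has to stand on its own. Your direction $(2)\Rightarrow(1)$ is the standard antichain argument and is correct as written.

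The direction $(1)\Rightarrow(2)$, however, has a genuine gap at exactly the point you flag as ``the technical heart'': the claim that the filter $G$ determined by $h$ on the poset of small partial selections from the sets $H(\beta)$ is $(M,\mathbb{B})$-generic. No ``direct density argument using $H$'' can establish this, because it is false for that poset. The $(<\lambda)$-support selection poset adds a \emph{mutually generic} choice function through $\prod_\beta H(\beta)$, whereas $h$ is a fixed function that may satisfy $M$-definable correlations that a generic choice function must violate. For instance, if $H(\beta)=H(\beta')=\{0,1\}$ for a pair $\beta\neq\beta'$ visible in $M$ and it happens that $h(\beta)=h(\beta')$, then the set $\{p: \beta,\beta'\in\dom(p),\ p(\beta)\neq p(\beta')\}$ is dense in $M$ but is missed by the filter generated by the restrictions of $h$. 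This is not a fixable technicality: the whole difficulty of Bukovsk\'y's theorem is that the forcing must be built so that genericity of the $V$-side object is \emph{provable}, and the known routes achieve this by taking $\mathbb{B}$ to be the Lindenbaum algebra of an infinitary propositional theory (with genericity coming from a completeness/consistency argument, and the $\kappa$-c.c.\ coming from uniform covering via a rank analysis of infinitary proofs), or the extender algebra (where genericity comes from the axioms being ``true of'' the real coding $V$). Your $\Delta$-system poset does satisfy the $\kappa$-c.c., but it forces the wrong thing. The preparatory reductions you describe (deriving $\kappa$-covering, upgrading to $M$-valued covers, reducing to a single set of ordinals generating $V$ over $M$) are all reasonable and appear in the real proofs, but the core construction needs to be replaced.
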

See also Friedman-Fuchino-Sakai \cite{FFS} and Schindler \cite{Sch}
for simple and modern proofs of Bukovsk\'y's theorem.
Friedman-Fuchino-Sakai \cite{FFS} gave a proof using an infinitary logic,
and Schindler \cite{Sch} used an extender algebra.


\section{The covering and the approximation properties of models of ZFC$-$P}
In this section, we discuss the covering and the approximation 
properties of submodels of $\calH(\chi)$.
These will be used in the next section.

First let us make the following note:
\begin{note}\label{AC}
We  use the property that ``every set can be coded by a set of ordinals''.
This is implied by
ZF$-$P+ the well-ordering theorem,
and, if $\chi$ is regular uncountable, then $\calH(\chi)$ satisfies the well-ordering theorem of course.
Natural submodels of $\calH(\chi)$ satisfy only ZFC$-$P.
However, it is known that ZFC$-$P does not imply the well-ordering theorem 
(Zarach \cite{Zarach}).
So ZFC$-$P does not imply our required property.
Moreover the collection schemes do not follow from
ZFC$-$P (Zarach \cite{Zarach2}).
To avoid those difficulties, we identify the axiom of choice with the
well-ordering theorem, and the replacement schemes with the collection schemes.
Hence ``a model of ZFC$-$P'' means  ``a model of ZF$-$P+the collection schemes+the well-ordering theorem''.
\end{note}


Here we explicitly define the covering and the approximation 
properties of submodels of $\calH(\chi)$.

\begin{define}
Let $\ka<\chi$ be regular uncountable cardinals.
Let $M \subseteq \calH(\chi)$ be a transitive  model of ZFC$-$P with
$\chi \subseteq M$.
\begin{enumerate}
\item $M$ satisfies the \emph{$\ka$-covering property} for $\calH(\chi)$
if for every set $A$ of ordinals with $\size{A}<\ka$ and $A \subseteq \chi$
(so $A \in \calH(\chi)$),
there is some set $B \in M$ of ordinals such that $\size{B}<\ka$
and $A \subseteq B$.
\item $M$ satisfies the \emph{$\ka$-approximation property} for $\calH(\chi)$
if for every bounded subset  $A \subseteq \chi$
(so $A \in \calH(\chi))$,
if $A \cap x \in M$ for every $x \in M \cap [\chi]^{<\ka}$,
then $A \in M$.
\item $M$ satisfies the \emph{$\ka$-uniform covering property} for $\calH(\chi)$
if for every $\alpha<\chi$ and $f:\alpha \to \chi$ (so $f \in \calH(\chi)$),
there is $F \in M$ such that $\dom(F)=\alpha$,
$f(\beta) \in F(\beta)$, and $\size{F(\beta)}<\ka$ for all $\beta<\alpha$.
\end{enumerate}
\end{define}

The following is a variation of Fact \ref{2}.

\begin{lemma}\label{4.3.111}
Let $\ka<\chi$ be regular uncountable cardinals with $\ka^+<\chi$.
Let $M, N \subseteq \calH(\chi)$ be 
transitive models of ZFC$-$P with
$\chi \subseteq M \cap N$.
If $M$ and $N$ satisfy the $\ka$-uniform covering and the $\ka$-approximation
properties for $\calH(\chi)$,
and $\p(\ka) \cap M=\p(\ka) \cap N$,
then $M=N$.
\end{lemma}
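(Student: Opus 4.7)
The plan is to adapt the standard Laver/Woodin uniqueness argument (Fact \ref{2}), which the author explicitly signals will be the template for the ZFC$-$P version. Since $\chi\subseteq M\cap N$ and every $x\in\calH(\chi)$ is coded by a well-ordering of its transitive closure, it suffices to show $\p(\alpha)\cap M=\p(\alpha)\cap N$ for every $\alpha<\chi$. The assumption $\ka^+<\chi$ together with $\chi\subseteq M\cap N$ immediately yields $(\ka^+)^M=(\ka^+)^N=\ka^+$, which, paired with the hypothesis $\p(\ka)\cap M=\p(\ka)\cap N$, is the anchor of the induction.

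The main induction is on infinite cardinals $\mu$ with $\ka\le\mu<\chi$. Granted the inductive hypothesis at all smaller cardinals, a G\"odel-pairing argument first upgrades it to $\p(\alpha)\cap M=\p(\alpha)\cap N$ for every $\alpha<\mu$: a bijection $|\alpha|\to\alpha$ can be reconstructed from a well-ordering of $|\alpha|$ of type $\alpha$, which is a subset of $|\alpha|\times|\alpha|\cong|\alpha|$ and hence common to $M$ and $N$. For the step to $\mu$, fix $A\in\p(\mu)\cap M$; by the $\ka$-approximation property of $N$ for $\calH(\chi)$ it suffices to show $A\cap x\in N$ for every $x\in N\cap[\chi]^{<\ka}$, and after replacing $x$ by $x\cap\mu$ we may take $x\subseteq\mu$. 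Invoking the $\ka$-uniform covering (hence $\ka$-covering) of $M$ produces $y\in M$ with $x\subseteq y\subseteq\mu$ and $|y|<\ka$; then $A\cap x=(A\cap y)\cap x$, and the problem reduces to showing that a size-$<\ka$ set $C:=A\cap y\in M$ with $C\subseteq\mu$ lies in $N$. If $C$ is bounded in $\mu$, say below $\beta<\mu$, then $C\in\p(\beta)\cap M=\p(\beta)\cap N$ and we are done.

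The main obstacle is the case when $C$ is cofinal in $\mu$, which forces $\cf(\mu)<\ka$, i.e.\ $\mu$ is singular of small cofinality. My plan here is to apply the $\ka$-uniform covering of both $M$ and $N$ to the same $V$-cofinal function $g:\cf(\mu)\to\mu$, obtaining families $F_M\in M$ and $F_N\in N$ whose indexing set $\cf(\mu)$ has size $<\ka$ and whose values all have size $<\ka$; by regularity of $\ka$, the unions $Y_M=\bigcup_\beta F_M(\beta)\in M$ and $Y_N=\bigcup_\beta F_N(\beta)\in N$ both have $V$-cardinality $<\ka$ and both cover $C$. Enumerating these inside their respective models lets one encode the characteristic function of $C$ as a subset of $\ka$ and transfer it across via $\p(\ka)\cap M=\p(\ka)\cap N$, then reconstruct $C$ in $N$. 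This last step is the delicate point of the argument, and is precisely where the hypothesis of $\ka$-\emph{uniform} covering (rather than mere $\ka$-covering) and the bound $\ka^+<\chi$ are doing real work: aligning the two models' views of short cofinal sequences in singular $\mu$ is the crux that makes the whole induction close.
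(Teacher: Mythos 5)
Your reduction is fine up to the last step, but the step you yourself flag as ``the delicate point'' is a genuine gap, and it is exactly the crux of the lemma: how to transfer a set $C$ of size $<\ka$ from $M$ to $N$. A single application of covering in each model gives you $Y_M\in M$ and $Y_N\in N$ with $C\subseteq Y_M\cap Y_N$, but these are \emph{different} sets; $N$ can collapse $Y_N$ and $M$ can collapse $Y_M$, yet the image $\pi_{Y_M}``C$ that you can move across via $\p(\ka)\cap M=\p(\ka)\cap N$ is useless to $N$, which does not have $Y_M$ or its collapse map, while $\pi_{Y_N}``C$ is not known to lie in $M$ (it is computed from $C\in M$ and $\pi_{Y_N}\in N$ jointly). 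Nothing in your sketch produces a \emph{common} frame in $M\cap N$ containing $C$, and without one the encoding/decoding cannot be carried out. Note also that the $\ka$-approximation property gives no traction on sets of size $<\ka$ (any approximating $x$ can already contain all of $C$), so it cannot rescue this step either; and as a minor point, covering a cofinal function $g:\cf(\mu)\to\mu$ only covers $\range(g)$, not $C$ --- you would need to cover an enumeration of $C$ itself.

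The paper closes this gap with a back-and-forth chain of length $\ka$: alternately cover by sets $x_i\in M$ and $y_i\in N$ so that $\bigcup_i x_i=\bigcup_i y_i=B$, and then use the $\ka$-\emph{approximation} property a second time to certify $B\in M\cap N$ (for $c\in M\cap[\chi]^{<\ka}$, regularity of $\ka$ gives $B\cap c=x_k\cap c\in M$ for some $k$, and symmetrically for $N$). This $B$ has order type $<\ka^+$ and its transitive collapse lies in both models, so $C$ transfers through $\p(\delta)\cap M=\p(\delta)\cap N$ for $\delta<\ka^+$. With this device the paper proves $[\chi]^{<\ka}\cap M=[\chi]^{<\ka}\cap N$ outright and then finishes with one application of approximation to an arbitrary $A\subseteq\chi$; your induction on cardinals and the case split on $\cf(\mu)$ become unnecessary once the $<\ka$-sized transfer is available. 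You should replace your final step with this chain argument (or an equivalent construction of a common superset in $M\cap N$).
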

\begin{proof}
We  repeat the proof of Fact \ref{2}.

First note that $(\ka^+)^M=(\ka^+)^N=\ka^+$;
this is immediate from the $\ka$-uniform covering property of $M$ and $N$.
Since $\p(\ka) \cap M=\p(\ka) \cap N$,
we have that $\p(\alpha ) \cap M=\p(\alpha) \cap N$ for every $\alpha<\ka^+$, which
we can verify as follows:
For $\alpha<\ka^+$, we can take a set $X \subseteq \ka \times \ka$
such that $X \in M$ and $\seq{\ka, X}$ is isomorphic to $\alpha$.
There is a unique isomorphism $\pi$ from $\seq{\ka, X}$ onto $\alpha$.
Since $\p(\ka) \cap M =\p(\ka) \cap N$,
we have $X \in N$, hence $\pi$ is also in $N$.
Using this $\pi$, we can check that $\p(\alpha) \cap M=\p(\alpha) \cap N$.
We also note that $M$ and $N$ satisfy the $\ka$-covering property for $\calH(\chi)$.

Since both $M$ and $N$ are models of ZFC$-$P,
every set in $M$ and $N$ can be coded by a set of ordinals
(see Note \ref{AC}).
Thus, in order to show $M=N$,
it is sufficient to show that,
for every set $A \subseteq \chi$,
we have $A \in M \iff A \in N$.

First we show that $[\chi]^{<\ka} \cap M=[\chi]^{<\ka} \cap N$.
Fix $A \in [\chi]^{<\ka} \cap M$. We see $A \in N$,
and the converse follows from the same argument.
By induction on $i<\ka$,
we take $\seq{x_i,y_i:i<\ka}$ as follows:
\begin{enumerate}
\item $x_i \in [\chi]^{<\ka} \cap M$ and $y_i \in [\chi]^{<\ka} \cap N$.
\item $\seq{x_i:i<\ka}$ and $\seq{y_i:i<\ka}$ are $\subseteq$-increasing.
\item $A \subseteq  x_i \cup y_i \subseteq x_{i+1} \cap y_{i+1}$.
\end{enumerate}
Suppose $\seq{x_j,y_j:j<i}$ is defined.
Let $z=\bigcup_{j<i} (x_j \cup y_j)$.
We have $z \in [\chi]^{<\ka}$.
By the $\ka$-covering property of $M$ and $N$,
we can find $x_i \in M \cap [\chi]^{<\ka}$ and $y_i \in N \cap [\chi]^{<\ka}$
with $z \subseteq x_i, y_i$.

Now let $B=\bigcup_{i<\ka} x_i=\bigcup_{i<\ka} y_i$.
Next we check $B \in M \cap N$.
To see $B \in M$, we use the $\ka$-approximation property of $M$.
Thus take $c \in M \cap [\chi]^{<\ka}$.
Since $\size{c}<\ka$ and $B=\bigcup_{i<\ka} x_i$, there must be $k<\ka$ with
$B \cap c \subseteq x_k$.
Then $B \cap c=x_k \cap x \in M$. Hence $B \in M$ by the approximation property of $M$.
We also know that $B \in N$ by the same argument.

Let $\delta=\ot(B)<\ka^+$.
Let $\pi:B \to \delta$ be the transitive collapse map.
We know $\pi \in M \cap N$.
Let $D=\pi``A \in M \cap \p(\delta)$.
Since $\p(\delta) \cap M=\p(\delta) \cap N$,
we have that $D \in N$.
Then $A=\pi^{-1}``D \in N$.

Now we have $[\chi]^{<\ka} \cap M=[\chi]^{<\ka} \cap N$.
Finally we show that for every set $A \subseteq \chi$,
we have $A \in M \iff A \in N$.
Take a set $A \in M \cap \p(\chi)$.
For every $x \in [\chi]^{<\ka} \cap M$,
we have that $A \cap x$ is in $M$.
On the other hand,
since
$[\chi]^{<\ka} \cap M=[\chi]^{<\ka} \cap N$,
we have that $A \cap x \in N$ for every $x \in [\chi]^{<\ka}$.
Then $A \in N$ by the $\ka$-approximation property of $N$.
The converse  follows from the same argument.
\end{proof}

By Facts \ref{3} and \ref{Bukovsky},
if $M$ and $\calH(\chi)$ are models of ZFC,
 and $M$ is a class of $\calH(\chi)$,
then the uniform covering property of $M$ implies the approximation property
using Bukovsk\'y's theorem.
However we do not know that Fact \ref{Bukovsky} is valid for
models of ZFC$-$P, 
so the implication is not clear for submodels of $\calH(\chi)$.
Here, we give a direct proof of 
the approximation property of submodels of  $\calH(\chi)$ from
the uniform covering property.

To  do it, we make some observations about trees.
For a tree $T$ of height $\alpha$ and $\beta<\alpha$,
let $T_\beta$ be the $\beta$-th level of $T$.

The following is a well-known theorem of Kurepa:
\begin{fact}[Kurepa \cite{Kurepa}]\label{Kurepa}
Let $\ka<\la$ be regular cardinals.
For every tree $T$ of height $\la$,
if $\size{T_\alpha}<\ka$ for every $\alpha<\la$,
then $T$ has a cofinal branch.
\end{fact}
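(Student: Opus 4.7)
The plan is to isolate the subtree of ``everlasting'' nodes and build a cofinal branch inside it by transfinite recursion. Call $t \in T_\alpha$ \emph{everlasting} if $t$ has extensions in $T$ at every level $\beta<\la$. I first verify that every level contains an everlasting node: otherwise, at some level $\alpha$ each $t \in T_\alpha$ would have all its extensions bounded by some $\beta_t<\la$, and since $\size{T_\alpha}<\ka$ while $\la>\ka$ is regular, $\beta^{*}=\sup_{t \in T_\alpha}\beta_t<\la$, contradicting $T_{\beta^{*}}\neq\emptyset$. The same argument, applied to the subtree above an everlasting node, shows that every everlasting $t$ has an everlasting extension at every higher level. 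Write $T^{*}$ for the subtree of everlasting nodes; it has height $\la$, levels of size $<\ka$, and every node of $T^{*}$ has everlasting extensions throughout $T^{*}$.

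Next I build a chain $\langle t_\alpha:\alpha<\la\rangle$ in $T^{*}$ with $t_\alpha$ everlasting at level $\alpha$. Successor stages use the extension property directly. At a limit $\gamma$ with $\cf(\gamma)\geq\ka$, a pigeonhole works: if no $s\in T^{*}_{\gamma}$ extended the chain, one would pick for each $s$ a minimal $\beta_s<\gamma$ with $s|_{\beta_s}\neq t_{\beta_s}$; then $\beta^{*}=\sup_{s}\beta_s<\gamma$ because $\size{T^{*}_{\gamma}}<\ka\leq\cf(\gamma)$, and extending $t_{\beta^{*}}$ to some $s^{*}\in T^{*}_{\gamma}$ forces $s^{*}|_{\beta_{s^{*}}}=t_{\beta_{s^{*}}}$, contradicting the choice of $\beta_{s^{*}}$.

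The main obstacle is a limit $\gamma$ with $\cf(\gamma)<\ka$, where the witnesses $\{\beta_s:s\in T^{*}_\gamma\}$ can be cofinal in $\gamma$ and the naive pigeonhole fails. My plan is to sidestep this by replacing the greedy level-by-level construction with a global one. Fix an everlasting $x_\alpha\in T^{*}_{\alpha}$ for each $\alpha<\la$ and define by recursion on $\beta<\la$ values $y_\beta\in T^{*}_{\beta}$ together with unbounded sets $S_\beta\subseteq\la$ satisfying $x_\alpha|_\beta=y_\beta$ for all $\alpha\in S_\beta$, keeping the invariant that $\bigcap_{\beta'\leq\beta}S_{\beta'}$ remains unbounded in $\la$. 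For each $\beta$ there are $<\ka$ possible values of $x_\alpha|_\beta$, so by regularity of $\la>\ka$ one can partition the current intersection into $<\ka$ pieces and extract an unbounded part at each successor step. The critical step is propagating unboundedness through limit stages of $\beta$, especially those of small cofinality --- this is where the regularity of $\la$, the strict inequality $\ka<\la$, and the bound $\size{T^{*}_\beta}<\ka$ must combine, and where I expect the real work of the proof to lie. Once the $y_\beta$ are constructed coherently, any $\alpha$ in the final intersection witnesses $y_{\beta'}=x_\alpha|_{\beta'}<x_\alpha|_\beta=y_\beta$ for $\beta'<\beta$, so $\{y_\beta:\beta<\la\}$ is a cofinal branch of $T^{*}$, and therefore of $T$.
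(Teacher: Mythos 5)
The paper does not prove this statement---it quotes it as a classical theorem of Kurepa---so I am judging your argument on its own terms. The first half is sound: the everlasting subtree $T^{*}$ has height $\la$, levels of size $<\ka$, every node of $T^{*}$ has everlasting extensions at every higher level, and your successor step and your pigeonhole at limits $\gamma$ with $\cf(\gamma)\ge\ka$ are correct. But the proof is not complete, and the missing piece is not a routine verification: it is the entire content of the theorem. Your fallback ``global'' construction cannot work as described. At each step you pass from an unbounded $S_{\beta}$ to an unbounded cell of a partition of it into $<\ka$ pieces; however, a decreasing sequence of unbounded subsets of $\la$ produced this way can already have empty intersection after $\om$ steps. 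For instance, fix an injection $g:\la\to{}^{\om}2$ and at stage $n$ split according to $g(\alpha)(n)$: each stage leaves an unbounded set, yet $\bigcap_{n<\om}S_{n}$ has at most one element. Nothing in your outline explains what feature of the tree blocks this collapse, so ``propagating unboundedness through limit stages'' is exactly the open problem, not the residue of one.

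The standard way to close the gap uses Fodor's lemma rather than a refining sequence of unbounded sets. Let $E=\{\delta<\la:\delta$ limit, $\cf(\delta)=\ka\}$, a stationary subset of $\la$. For $\delta\in E$, the $<\ka$ many nodes of $T_{\delta}$ are pairwise separated below $\delta$ (each pair differs at some level $<\delta$, and there are $<\ka\le\cf(\delta)$ pairs), so there is a least $f(\delta)<\delta$ such that $t\mapsto t\restriction f(\delta)$ is injective on $T_{\delta}$. By Fodor there are a stationary $S\subseteq E$ and a single $\beta^{*}$ with $f(\delta)=\beta^{*}$ on $S$. The ranges of the injections $e_{\delta}:T_{\delta}\to T_{\beta^{*}}$ for $\delta\in S$ form a $\subseteq$-decreasing $\la$-chain of subsets of a set of size $<\ka$, hence stabilize from some $\delta_{0}\in S$ on; past that point the restriction maps between levels indexed by $S$ are bijections, and following any fixed $t\in T_{\delta_{0}}$ through their inverses yields a coherent cofinal chain, whose downward closure is the desired branch. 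Your everlasting-subtree reduction is compatible with this but is not needed for it; some argument of this pressing-down (or injectivity-stabilization) type has to replace your final step.
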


\begin{lemma}\label{2.4.111}
Let $\ka<\la$ be regular cardinals.
Let $T$ be a tree of height $\la$.
Suppose that 
$\size{T_\alpha}<\ka$
for every $\alpha<\la$.
Then $T$ has fewer than $\ka$ many cofinal branches.
\end{lemma}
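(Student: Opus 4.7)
The plan is a direct contradiction argument that uses only the regularity of $\la$ together with the basic tree property that each node has a unique predecessor at every lower level; no deeper tools (for example Kurepa's theorem, Fact \ref{Kurepa}) appear to be needed. The idea is that if $T$ had at least $\ka$ many cofinal branches, one could find a single level $<\la$ at which all of those branches are already pairwise separated, forcing that level to have size at least $\ka$ and contradicting the hypothesis.

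Concretely, I would assume toward a contradiction that $T$ admits an injective sequence $\seq{b_\xi:\xi<\ka}$ of pairwise distinct cofinal branches, writing $b(\alpha)$ for the unique element of $b\cap T_\alpha$. For each pair $\xi\ne\xi'<\ka$, the branches $b_\xi$ and $b_{\xi'}$ are distinct cofinal subsets of $T$, so there is a least ordinal $\alpha_{\xi\xi'}<\la$ with $b_\xi(\alpha_{\xi\xi'})\ne b_{\xi'}(\alpha_{\xi\xi'})$. The set of these splitting levels has cardinality at most $\ka\cdot\ka=\ka<\la$, so the regularity of $\la$ yields
\[
\alpha^{*}\;:=\;\sup\{\alpha_{\xi\xi'}+1:\xi\ne\xi'<\ka\}\;<\;\la.
\]

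The last step is to observe that the assignment $\xi\mapsto b_\xi(\alpha^{*})$ is injective: if one had $b_\xi(\alpha^{*})=b_{\xi'}(\alpha^{*})$ for some $\xi\ne\xi'$, then the uniqueness of predecessors in $T$ would propagate this equality to every $\beta\le\alpha^{*}$, contradicting the choice of $\alpha_{\xi\xi'}<\alpha^{*}$. Therefore $\size{T_{\alpha^{*}}}\ge\ka$, contradicting the hypothesis $\size{T_{\alpha^{*}}}<\ka$.

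The only place where any care is needed is in verifying that $\alpha^{*}<\la$, which is exactly where the assumption that $\ka<\la$ and that $\la$ is regular enters; the remainder is routine bookkeeping with unique predecessors in a tree. I do not anticipate a substantive obstacle.
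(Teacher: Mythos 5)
Your proposal is correct and follows essentially the same argument as the paper: pick a splitting level for each pair of branches, use $\ka<\la$ and the regularity of $\la$ to bound the supremum of these levels below $\la$, and conclude that the branches are pairwise separated at that level, making it too large. The only difference is that you spell out explicitly (via uniqueness of predecessors) why separation persists above the splitting level, a point the paper leaves implicit.
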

\begin{proof}
Suppose to the contrary that
$T$ has $\ka$ cofinal branches $\seq{B_i:i<\ka}$.
For each $i<j<\ka$,
there is some $\alpha(i,j)<\la$
such that $B_i \cap T_{\alpha(i,j)} \neq B_j \cap T_{\alpha(i,j)}$.
Put $\alpha=\sup\{\alpha(i,j):i<j<\ka\}$.
We have $\alpha<\la$ since $\ka<\la$.
Then, for every $i<j<\ka$,
we know $B_i \cap T_\alpha \neq B_j \cap T_\alpha$.
Thus $T_\alpha$ has cardinality at least $\ka$,
which is a contradiction.
\end{proof}

\begin{lemma}\label{2.5.111}
Let $\ka$ be a regular cardinal, and
$\mu>\ka$ an ordinal with $\cf(\mu)>\ka$.
Let $T$ be a tree of height $\mu$.
Suppose that $\size{T_\alpha}<\ka$ for every $\alpha<\mu$.
Let $W \subseteq V$ be a transitive model of ZFC containing all ordinals.
If $T \in W$,
then every cofinal branch of $T$ in $V$ belongs to $W$.
\end{lemma}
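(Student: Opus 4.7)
The plan is to use Lemma~\ref{2.4.111} inside $V$ to pin the branch $B$ down to a single ``fork level'' $\alpha^*<\mu$, and then recover $B$ inside $W$ from an absolute (extendibility) definition. A key design choice is to apply Fact~\ref{Kurepa} only inside $V$, where the level-size hypothesis is available: inside $W$ the cardinalities $|T_\alpha|$ could a priori behave differently, so a direct application of Kurepa in $W$ is not obviously available.

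First, by Lemma~\ref{2.4.111} applied in $V$, the tree $T$ has fewer than $\ka$ cofinal branches in $V$. For each pair of distinct cofinal $V$-branches $B_i\ne B_j$, let $\alpha(i,j)<\mu$ be the least level at which they disagree. There are fewer than $\ka$ such pairs, and $\cf(\mu)>\ka$, so $\alpha^*:=\sup_{i\ne j}\alpha(i,j)<\mu$. Writing $b_{\alpha^*}$ for the unique node of $B$ at level $\alpha^*$, any cofinal $V$-branch through $b_{\alpha^*}$ must coincide with $B$ at level $\alpha^*$ and therefore equal $B$ by the choice of $\alpha^*$. Hence $B$ is the unique cofinal $V$-branch of $T$ through $b_{\alpha^*}$.

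Now I pass to $W$. Since $T\in W$ and $W$ is transitive, $b_{\alpha^*}\in W$ and each $T_\beta\in W$. Define
\[
S=\{t\in T : t\ge_T b_{\alpha^*}\text{ and }\forall\beta<\mu\,\exists s\in T_\beta\,(s\ge_T t)\}.
\]
All parameters lie in $W$ and the defining formula uses only quantifiers bounded by them, so $S\in W$ and $S$ has the same meaning whether computed in $V$ or in $W$. I then claim $S=B\cap T_{\ge\alpha^*}$. The inclusion $\supseteq$ is immediate, as $B$ itself witnesses extendibility of each of its nodes above level $\alpha^*$. For $\subseteq$, if $t\in S$ then in $V$ the subtree above $t$ has levels of size $<\ka$ and height with cofinality $>\ka$, so Fact~\ref{Kurepa} (applied in $V$, after thinning to a cofinal subsequence of levels of order type $\cf(\mu)$ to reduce to the regular-height case) produces a cofinal branch of $T$ through $t$ in $V$, which by uniqueness equals $B$; hence $t\in B$.

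Combining, $B=\{s\in T : s<_T b_{\alpha^*}\}\cup S$, and both pieces are in $W$, so $B\in W$. The main point on which I expect care to be needed is the existence of $\alpha^*$: it relies crucially on the combined hypotheses $\ka$ regular and $\cf(\mu)>\ka$, which together ensure that a supremum of fewer than $\ka$ ordinals below $\mu$ remains below $\mu$, so that a single fork level can witness uniqueness for all cofinal branches simultaneously. Once $\alpha^*$ is secured, the remainder is a clean absoluteness argument.
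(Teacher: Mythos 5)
Your proof is correct, but it takes a genuinely different route from the paper's. The paper works by contradiction inside $W$: it splits into cases according to whether $\mu$ is regular in $W$, applies Lemma \ref{2.4.111} \emph{in $W$} to enumerate the fewer than $\ka$ cofinal branches that $W$ sees, notes that a cofinal $V$-branch $B\notin W$ would have a node $t$ at some level lying on none of them, and then gets a contradiction from Kurepa's theorem applied \emph{in $W$}, which produces a $W$-branch through $t$. You instead apply Lemma \ref{2.4.111} and Fact \ref{Kurepa} only in $V$, isolate a single level $\alpha^*$ past which $B$ is determined by the one node $b_{\alpha^*}$, and reconstruct $B$ inside $W$ as the predecessors of $b_{\alpha^*}$ together with the absolutely definable set $S$ of extendible nodes above it. This buys you two things: you never have to transfer the hypotheses $\size{T_\alpha}<\ka$ and $\cf(\mu)>\ka$ into $W$, and you avoid the paper's case split on $\cf(\mu)^W$, since the thinning to a cofinal set of levels of order type $\cf(\mu)$ is done once, in $V$. (Your stated worry about the level sizes in $W$ is in fact unfounded: $\ka$ is a cardinal of $V$, hence of $W$, and a set in $W$ of $V$-cardinality $<\ka$ has $W$-cardinality $<\ka$; so the paper's internal application of Kurepa is legitimate. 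Your caution nevertheless led to a cleaner argument.) Two small points to tighten: Lemma \ref{2.4.111} as stated requires the height to be a regular cardinal, so your opening application of it to $T$, whose height $\mu$ is merely of cofinality $>\ka$, needs the same thinning reduction you invoke later for Kurepa; and it is worth saying explicitly that both the level decomposition of $T$ and the extendibility of a node are expressed by formulas with all quantifiers bounded by $T$ and $\mu$, which is what makes $S$ the same set whether computed in $W$ or in $V$.
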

\begin{proof}
Case 1: $\cf(\mu)^W=\mu$.
We work in $W$. By Lemma \ref{2.4.111},
we have that
$T$ has fewer than $\ka$ many cofinal branches.
Let $\nu<\ka$ and $\seq{B_i:i<\nu}$ be an enumeration of all
cofinal branches of $T$.

In $V$, suppose to the contrary that
there exists a cofinal branch $B$ of $T$ with $B \notin W$.
For each $i<\nu$, there is $\alpha(i)<\mu$
such that $T_{\alpha(i)} \cap B \neq T_{\alpha(i)} \cap B_i$.
Since $\cf(\mu)^V>\ka$,
we have that $\alpha=\sup\{\alpha(i)+1:i<\nu\}<\mu$.
Pick $t \in B \cap T_{\alpha}$.
We know $t \notin B_i$ for every $i<\nu$.
On the other hand,
by Kurepa's theorem (Fact \ref{Kurepa}) applied to $W$,
there is a cofinal branch $B' \in W$ with $t \in B'$.
Then $B'=B_i$ for some $i<\nu$,  but $t \in B_i$,
which is a contradiction.

Case 2: $\cf(\mu)^W<\mu$.
We work in $W$.
Fix a cofinal set $X \subseteq \mu$ with order type $\cf(\mu)^W$,
and consider the subtree $T^*=\bigcup_{\alpha \in X} T_\alpha$ of $T$.
Then $T^*$ is a tree of height $\cf(\mu)^W$,
and $\size{T^*_\alpha}<\ka$ for every $\alpha<\cf(\mu)$.
Moreover, if $B^* \subseteq T^*$ is a cofinal branch of $T^*$,
then there is a unique cofinal branch $B \subseteq T$ with
$B \cap T^*=B^*$.
Now let $B \subseteq T$ be a cofinal branch of $T$ with $B \in V$.
Let $B^*=B \cap T^*$. $B^*$ is a cofinal branch of $T^*$.
By (1), we have that $B^* \in W$,
and there is a cofinal branch $B' \subseteq T$ with $B' \cap T^*=B^*$.
Then it is clear that $B'=B$, hence $B \in W$.
\end{proof}

Now we prove the main result of this section.
\begin{lemma}\label{4.4.111}
Let $\theta$ be a strong limit cardinal,
and $\ka<\theta$ be a regular uncountable cardinal.
Let $\chi=\theta^+$, and suppose
there is a transitive model $W \subseteq V$ of ZFC
such that $M=\calH(\chi)^W=\calH(\chi) \cap W$.
If $M$ satisfies the $\ka$-uniform covering property for $\calH(\chi)$,
then $M$ satisfies the $\ka^+$-approximation property for $\calH(\chi)$.
\end{lemma}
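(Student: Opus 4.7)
The plan is to prove $A \in M$ by constructing, inside $M$, a set $\calA$ of candidates that provably contains $A$; since $M$ is transitive, $A \in \calA \subseteq M$ then gives $A \in M$. In one exceptional case (when $\mu = \theta$ is regular in $M$, where the cardinality of $\calA$ can exceed $\chi$ in $M$) the plan is instead to apply Lemma~\ref{2.5.111} to a tree built from the same covering data.

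First I would reduce. Let $A \subseteq \alpha < \chi$ satisfy the $\ka^+$-approximation hypothesis. If $\size{\alpha}^M \le \ka$, then $\alpha \in M \cap [\chi]^{\le \ka}$, so $A = A \cap \alpha \in M$. Otherwise, via a bijection in $M$, assume $\alpha = \mu$ is a cardinal in $M$ with $\ka < \mu \le \theta$, and proceed by induction on $\mu$ (the base case $\mu = \ka^+$ uses $\ka^{+M} = \ka^{+V}$, a consequence of applying $\ka$-uniform covering to any would-be collapsing bijection). For any $\beta < \mu$ we have $\size{\beta}^M < \mu$, so after transferring by a bijection in $M$ the inductive hypothesis yields $A \cap \beta \in M$.

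For the main construction, fix in $M$ a cofinal sequence $\seq{\mu_i : i < \lambda}$ in $\mu$ of $M$-cardinals with $\lambda = \cf(\mu)^M$. The strong limit property of $\theta$ in $V$ gives $\size{\p(\mu_i) \cap M}^V \le 2^{\size{\mu_i}^V} < \theta$; since ordinals of $V$-cardinality below $\theta$ are themselves below $\theta$, the $M$-cardinality $\tau_i := \size{\p(\mu_i) \cap M}^M$ is an ordinal below $\theta$, placing $\p(\mu_i) \cap M$ in $\calH(\chi)^W = M$. Enumerating these in $M$ and coding $i \mapsto A \cap \mu_i$ as $h : \lambda \to \tau$ with $\tau := \sup_i \tau_i < \chi$, I apply the $\ka$-uniform covering property to obtain $G \in M$ with $\size{G(i)}^M < \ka$ and $A \cap \mu_i \in G(i)$ for all $i < \lambda$. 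Inside $M$, form the set $\mathcal{S}$ of coherent selections $\sigma \in \prod_i G(i)$ (i.e., $\sigma(j) = \sigma(i) \cap \mu_j$ for $j < i$), and set $\calA = \{\bigcup_i \sigma(i) : \sigma \in \mathcal{S}\}$. The same strong-limit transfer shows $\ka^\lambda < \chi$ in $M$ whenever $\lambda < \theta$, so $\prod_i G(i)$ and hence $\calA$ are sets in $M$. Since $\sigma_A := \seq{A \cap \mu_i : i<\lambda}$ lies in $\mathcal{S}$ with $\bigcup_i \sigma_A(i) = A$, we obtain $A \in \calA \subseteq M$, hence $A \in M$.

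The exceptional case is $\mu = \theta$ with $\theta$ regular in $M$: then necessarily $\lambda = \theta$, and $\ka^\theta$ may exceed $\chi$ in $M$, so $\calA$ need not be a set of $M$. Here I would instead use a tree: define $G^*(\beta) = \{s \in G(\beta) : s \cap \gamma \in G(\gamma) \text{ for all } \gamma < \beta\}$ for $\beta < \theta$, and let $T = \bigcup_{\beta<\theta} \{\beta\} \times G^*(\beta)$ with the restriction ordering $(\gamma, t) < (\beta, s)$ iff $\gamma < \beta$ and $t = s \cap \gamma$. Then $T$ is a tree in $M \subseteq W$ whose levels have size $<\ka$ in $V$, and the $M$-regularity of $\theta$, combined with $\ka$-uniform covering, forces $\cf(\theta)^V > \ka$ (otherwise a short $V$-cofinal sequence in $\theta$ would be covered by a short $M$-cofinal set, contradicting $M$-regularity of $\theta$). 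Lemma~\ref{2.5.111} then places the cofinal branch $\seq{(\beta, A \cap \beta) : \beta < \theta}$ in $W$, so $A \in W \cap \calH(\chi) = M$. The main obstacle throughout is verifying that the auxiliary objects ($\calA$ or $T$) are genuine sets of $M$ even though $M$ lacks the power set axiom; the resolution is the strong limit property of $\theta$ in $V$, which bounds $V$-cardinalities of power sets and propagates to ordinal bounds on the corresponding $M$-cardinalities.
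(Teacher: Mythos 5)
There is a genuine gap at the heart of your main construction. The set $\mathcal{S}$ of coherent selections, being formed \emph{inside} $M$, consists only of those $\sigma\in\prod_i G(i)$ that are already elements of $M$, and hence $\calA=\{\bigcup_i\sigma(i):\sigma\in\mathcal{S}\}$ contains only unions of selections lying in $M$. Since a coherent selection with union $A$ must equal $\sigma_A=\seq{A\cap\mu_i:i<\lambda}$ (coherence forces $\sigma(j)=\bigl(\bigcup_i\sigma(i)\bigr)\cap\mu_j$), the assertion ``$A\in\calA$'' is equivalent to ``$\sigma_A\in M$'', which is equivalent to the conclusion $A\in M$: the argument is circular. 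When $\lambda=\cf(\mu)^M\ge\ka^+$ the circle can be broken, because then $\cf(\lambda)^V>\ka$ (by the covering argument you give in your exceptional case) and Lemma~\ref{2.5.111}, applied to the tree of coherent partial selections, puts the branch $\sigma_A$ into $W$ and hence into $M$ --- but that is just the tree argument, which you reserve only for $\mu=\theta$ regular, whereas the paper's Case~3 runs it for every $\alpha$ with $\cf(\alpha)\ge\ka^+$.

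The case $\cf(\mu)\le\ka$ is irreparably broken as written, and it is the hardest case of the lemma. There your construction uses only the fact that the initial segments $A\cap\mu_i$ lie in $M$, and that is not sufficient: a tree of height $\om$ with levels of size $<\ka$ (e.g.\ ${}^{<\om}2$) can acquire new cofinal branches under $\ka$-c.c.\ forcing, so there can be $A\subseteq\mu$ with $\cf(\mu)=\om$ and all $A\cap\mu_i\in W$ yet $A\notin W$. Such an $A$ must violate the $\ka^+$-approximation \emph{hypothesis} on some small set that is not an initial segment, and that extra information is exactly what the proof must exploit; your proposal never uses it in this case. The paper's Case~2 does: it shows $M\cap[\alpha]^{<\ka^+}$ is stationary, chooses $N\prec\calH(\chi)$ with $N\cap\alpha\in M$, applies the hypothesis to obtain $A\cap N\in M$, and from this single trace computes inside $M$ the indices of all the initial segments $A\cap\beta$ in a fixed enumeration. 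Some device of this kind is unavoidable and is missing from your argument. Your reduction to $M$-cardinals, the strong-limit transfer showing the auxiliary objects are genuine sets of $M$, and the exceptional tree case itself are all sound.
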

\begin{proof}
First note the following:
\begin{enumerate}
\item For every $\alpha$ with $\ka \le \alpha<\chi$,
$\alpha$ is regular in $M$ if and only if $\alpha$ is regular in $V$.
\item In particular, if $\ka \le \alpha<\chi$,
then $\alpha$ is a cardinal in $M$ if and only if $\alpha$ is a cardinal in $V$.
\item The set $M \cap {}^{<\theta} 2$ is in $M$.
\end{enumerate}
(1) and (2) follow from the $\ka$-uniform covering property of $M$.
For (3),
since $\theta$ is strong limit,
$\theta$ is also a strong limit cardinal in $W \subseteq V$.
Thus $W \cap {}^{<\theta} 2$ $(=({}^{<\theta} 2)^W)$
has cardinality $\theta$. 
Hence we have 
\[
M \cap {}^{<\theta} 2 
=W \cap {}^{<\theta} 2
\in \calH(\chi) \cap W=M.
\]

We prove the following by induction on $\alpha<\chi$:
For every $A \subseteq \alpha$,
if $A \cap x \in M$ for every $x \in M \cap [\alpha]^{<\ka^+}$,
then $A \in M$.

Fix $\alpha<\chi$, $A \subseteq \alpha$,
and suppose $A \cap x \in M$
for every $x \in M \cap [\alpha]^{<\ka^+}$.
By the induction hypothesis,
we have that $A \cap \beta \in M$ for every $\beta <\alpha$.

It is clear in the case $\alpha<\ka^+$.
Thus suppose $\alpha \ge \ka^+$.
\\

Case 1: $\alpha$ is not a cardinal.
By the above remark (2), we have that $\alpha$ is not a cardinal in $M$.
Put $\la=\size{\alpha}^M$.
Take a bijection $\pi:\la \to \alpha$ with $\pi \in M$,
and let $B=\pi^{-1}``A \subseteq \la$.
It is easy to check that $B \cap x \in M$
for every $x \in [\la]^{<\ka} \cap M$.
By the induction hypothesis,
we have $B  \in M$. Then $A=\pi``B \in M$.
\\

Case 2: $\alpha$ is a cardinal with $\cf(\alpha)<\ka^+$.
Note that $\alpha \le \theta$ by remark (2).

First we claim the following:
\begin{claim}
The set $M \cap [\alpha]^{<\ka^+}$ is stationary in
$[\alpha]^{<\ka^+}$
\end{claim}
\begin{proof}
First note that  we do not require that $M \cap [\alpha]^{<\ka^+} \in M$.

To show the assertion, take a function  $f:[\alpha]^{<\om} \to \alpha$.
We will find $x \in M \cap [\alpha]^{<\ka^+}$ which is closed under $f$ and $\ka \subseteq x$.
There is a bijection $\pi \in M$ from $[\alpha]^{<\om}$ onto $\alpha$.
Hence, by the $\ka$-uniform covering property of $M$,
there is $F \in M$ such that $\dom(F)=[\alpha]^{<\om}$,
$f(s) \in F(s) \subseteq \alpha$, and $\size{F(s)}<\ka$.
Since $M$ is a model of ZFC$-$P,
we can find $x \in M \cap [\alpha]^{<\ka^+}$ such that $\ka \subseteq x$ and
$F(s) \subseteq x$ for all $s \in [x]^{<\om}$.
Then $x$ is clearly closed under $f$.
\qedhere[Claim]
\end{proof}

By remark (3),
we have that $M \cap \{x \subseteq \alpha:x$ is bounded in $\alpha\}  \in M$.
Fix $\mu \in \chi$ and 
a one-to one enumeration  $\seq{B_i:i<\mu} \in M$ of
$M \cap \{x \subseteq \alpha:x$ is bounded in $\alpha\}$.
Fix $X \subseteq \alpha $ such that $X \in M$,
$\size{X}<\ka^+$, and $X$ is unbounded in $\alpha$.
We have that $A \cap \beta \in M$ for every $\beta<\alpha$.
Thus, for each $\beta \in X$,
there is a unique $i(\beta)<\mu$ with
$A \cap \beta=B_{i(\beta)}$.
We will see that the set $\{i(\beta) :\beta \in X\}$ is in $M$,
then $A=\bigcup \{B_{i(\beta)}:\beta \in X\} \in M$, as required.

Since $M$ satisfies the $\ka$-uniform covering property,
$M$ satisfies the $\ka^+$-covering property.
Thus we can find $Y \in M \cap [\mu]^{<\ka^+}$ with $\{i(\beta):\beta \in X\} \subseteq Y$.

By the claim above, we can find $N \prec \calH(\chi)$
such that $\size{N}<\ka^+$, $\ka \subseteq N$,
$\sup(N \cap \alpha)=\alpha$, $\seq{B_i:i<\mu}, X, Y, A \in N$, 
and $N \cap \alpha \in M$.
Note that $X, Y \subseteq N$ since $\size{X},\size{Y}<\ka^+$.

For every $\beta \in X$ and
$i \in Y$, we have that 
$i = i(\beta) \iff
A \cap \beta =B_i$.
By the elementarity of $N$ and $X, Y \subseteq N$,
for every $\beta \in X$ and
$i \in Y$, we know
that 
$i = i(\beta) \iff
A \cap \beta \cap N=B_i \cap (N \cap \alpha)$.

Since $\size{N}<\ka^+$ and $N \cap \alpha \in M$,
we have $N \cap \alpha \in M \cap [\alpha]^{<\ka^+}$,
hence $A \cap N =A \cap (N \cap \alpha) \in M$.
Thus, in $M$, for each $\beta \in X$, $i(\beta)$ is definable
as the unique $i \in Y$ with
$(A  \cap N) \cap \beta=B_i \cap (N \cap \alpha)$.
Hence we have $\seq{i(\beta):\beta \in X} \in M$,
and $A=\bigcup_{\beta \in X} B_{i(\beta)} \in M$.
\\

Case 3: $\alpha$ is a cardinal with $\cf(\alpha) \ge \ka^+$.
As in Case 2, we have that $\alpha \le \theta$.

Let $f:\alpha \to 2$ be the characteristic function of $A$.
It is enough to see that $f \in M$.
Since $A \cap \beta \in M$ for every $\beta <\alpha$,
we have that $f \restriction \beta\in M$ for every $\beta<\alpha$.
Since $\alpha \le \theta$,
the set ${}^{<\alpha} 2 \cap M$ is in $M$ by remark (3).
Fix $\mu<\chi$ and a one-to-one enumeration $\seq{g_i:i<\mu} \in M$
of ${}^{<\alpha} 2 \cap M$.
Define $h:\alpha \to \mu$ by
$h(\beta)=i \iff f \restriction \beta=g_i$.
By the $\ka$-uniform covering property of $M$,
there is $H \in M$ with $h(\beta) \in H(\beta)$ and
$\size{H(\beta)}<\ka$. We may assume that
for every $i \in H(\beta)$, $\dom(g_i)=\beta$.
Put $T'=\{g_i: i \in H(\beta), \beta <\alpha\} \in M$.
We know that $f \restriction \beta \in T'$ for all $\beta<\alpha$.

In $M$, let $T$ be the set of all $g \in T'$
such that $g \restriction \gamma \in T'$ for every $\gamma<\dom(g)$.
Again, we have $f \restriction \beta \in T$ for every $\beta<\alpha$.
Thus $T$ is a tree of height $\alpha$ and 
 for every $g \in T$ and $\gamma<\dom(g)$, we have $g \restriction \gamma \in T$.
The $\beta$-th level of $T$ is a subset of $\{g_i: i \in H(\beta)\}$.
Since $\size{H(\beta)}<\ka$,
we have that $\size{T_\beta}<\ka$ for every $\beta<\alpha$.
The set $B=\{f \restriction \beta:\beta<\alpha\}$ is a cofinal branch of $T$.
$T$ is a tree in a transitive model $W$ of ZFC.
Applying Lemma \ref{2.5.111} to $W$ and $T$, we have $B \in W$.
Then $B \in M$ since $M=\calH(\chi)^W$, hence $f \in M$.
\end{proof}

We have the following as an immediate corollary of Lemma \ref{4.4.111}.
\begin{cor}\label{approx}
Let $W \subseteq V$ be a transitive model of ZFC
containing all ordinals.
Let $\ka$ be a regular uncountable cardinal.
If $W$ satisfies the $\ka$-uniform covering property for $V$,
then $W$ satisfies the $\ka^+$-approximation property for $V$,
and $W$ is definable in $V$ with some parameters.
\end{cor}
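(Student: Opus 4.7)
The plan is to obtain both conclusions by reducing to initial segments $\calH(\chi)$ and applying Lemma \ref{4.4.111} together with Fact \ref{2}.

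For the approximation property, fix a set $A$ of ordinals with $A \cap x \in W$ for every $x \in W$ with $\size{x}<\ka^+$. Pick a strong limit cardinal $\theta>\ka$ of $V$ with $\sup(A)<\theta$, and set $\chi=\theta^+$. The $\ka$-uniform covering of $W$ for $V$ forces every $V$-cardinal $\ge \ka$ to remain a cardinal of $W$, keeps $\theta$ a strong limit in $W$, and yields $(\theta^+)^W=\chi$; a short calculation then shows $\calH(\chi)^W=\calH(\chi)\cap W$. Denote this common class by $M$. Restricting to $\chi$ the uniform covers supplied by $W$, one sees that $M$ inherits the $\ka$-uniform covering property for $\calH(\chi)$, so Lemma \ref{4.4.111} provides $M$ with the $\ka^+$-approximation property for $\calH(\chi)$. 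For any $x\in M\cap[\chi]^{<\ka^+}$, we have $x\in W$ with $\size{x}<\ka^+$, whence $A\cap x\in W$ by hypothesis; since $A\cap x$ is a bounded subset of $\chi$, it lies in $\calH(\chi)\cap W=M$. The approximation property of $M$ then delivers $A\in M\subseteq W$.

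For the definability, the $\ka$-uniform covering of $W$ also implies the $\ka^+$-uniform, hence the $\ka^+$-covering, property (by the Notes following the definition of uniform covering), and a direct cardinal count using uniform covers yields $(\ka^{++})^W=\ka^{++}$. Thus $W$ satisfies every hypothesis of Fact \ref{2} with $\ka^+$ substituted for $\ka$. Setting $r=\p(\ka^+)^W$, Fact \ref{2} characterizes $W$ as the unique transitive class containing all ordinals, modelling ZFC, satisfying the $\ka^+$-covering and $\ka^+$-approximation properties for $V$, with $(\ka^{++})^W=\ka^{++}$ and $\p(\ka^+)^W=r$. This uniqueness is enough to turn ``$x\in W$'' into a first-order formula with parameter $r$, via the standard reduction to sufficiently large rank-initial segments (each $W\cap V_\alpha$ being determined by $r$ and the ambient structure).

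The principal technical point is the localization step: checking $\calH(\chi)^W=\calH(\chi)\cap W$ and transferring the $\ka$-uniform covering property from $V$ to $\calH(\chi)$. Both hinge on $W$ correctly computing cardinals and strong limits up to $\chi$, which is precisely the content of $\ka$-uniform covering. Once this is in place, the $\ka^+$-approximation property drops out of Lemma \ref{4.4.111} and the definability follows from Fact \ref{2} by inspection.
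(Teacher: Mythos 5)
Your argument for the $\kappa^+$-approximation property is essentially the paper's: localize to $\calH(\chi)$ for a large strong limit $\theta$, observe that $\calH(\chi)^W = \calH(\chi)\cap W$ and that the $\kappa$-uniform covering property passes down, then invoke Lemma \ref{4.4.111}. That half is correct (the ``short calculation'' identifying $\calH(\chi)^W$ with $\calH(\chi)\cap W$ ultimately rests on the fact that $\kappa$-uniform covering forces $W$ and $V$ to agree on cardinals $\ge\kappa$, which you should say explicitly, but the idea is right).

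The definability half has a genuine gap. You invoke Fact \ref{2} to conclude that $W$ is the unique transitive \emph{class} containing all ordinals, modelling ZFC, satisfying the $\kappa^+$-covering and $\kappa^+$-approximation properties for $V$ and having $\p(\kappa^+)^W=r$, and then assert that this ``is enough to turn $x\in W$ into a first-order formula \ldots\ via the standard reduction to sufficiently large rank-initial segments.'' But Fact \ref{2} is a statement about whole classes; it does not, by itself, say anything about $W\cap V_\alpha$ or $W\cap\calH(\chi)$ being determined by $r$ inside the set $\calH(\chi)$. You cannot quantify over classes in a first-order formula, so uniqueness at the class level is not a definition. The localization you gesture at is exactly what requires work, and it is precisely what Lemma \ref{4.3.111} is for: it is the $\calH(\chi)$-level analogue of Fact \ref{2}, proved separately in the paper because Fact \ref{2} does not transfer automatically to initial segments. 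The paper's definition of $W$ quantifies over strong limit $\theta>\kappa$ and asserts the existence of a \emph{set} model $M\subseteq\calH(\theta^+)$ of ZFC$-$P with $\theta^+\subseteq M$, $M\cap\p(\kappa^+)=r$, satisfying the $\kappa^+$-uniform covering and $\kappa^+$-approximation properties for $\calH(\theta^+)$; Lemma \ref{4.3.111} (with $\kappa^+$ in place of $\kappa$, and noting $\kappa^{++}<\theta^+$) guarantees this $M$ is unique and hence equals $\calH(\theta^+)^W$. To repair your proof, replace the appeal to Fact \ref{2} by Lemma \ref{4.3.111} and carry out the localization explicitly rather than calling it standard.
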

\begin{proof}
For each strong limit cardinal $\theta>\ka$,
we have that $\calH(\theta^+)^W$ is of the form $W \cap \calH(\theta^+)$ and satisfies the $\ka^+$-approximation property
for $\calH(\theta^+)$ by Lemma \ref{4.4.111}.
Then it is clear that $W$ satisfies the $\ka^+$-approximation property for $V$.

Let $r=\p(\ka^{+}) \cap W$.
Now $W$ is definable in $V$ with parameters $r$ and $\ka^+$ as follows:
$x\in W$ if and only if there is a strong limit cardinal $\theta>\ka$
and the unique transitive model $M \subseteq \calH(\theta^+)$ of ZFC$-$P
such that $\theta^+ \subseteq M$,
$M \cap \p(\ka^+)=r$, $M$ satisfies the $\ka^+$-uniform covering and the
$\ka^+$-approximation properties for $\calH(\theta^+)$,
and $x \in M$.
\end{proof}

\begin{note}
\begin{enumerate}
\item If $W \subseteq V$ satisfies the uniform covering property for $V$,
then $W$ is a ground by Bukovsky's theorem,
hence $W$ is definable in $V$ by Laver and Woodin's result.
In this sense, the definablity of $W$  satisfying the uniform covering property is not a new result.
\item On the other hand,
in the proof of Corollary \ref{approx},
it is not necessary that
$V$ satisfies the replacement scheme for the formulas
of the language $\{\in, W\}$.
So Corollary \ref{approx} shows that
every $W \subseteq V$ satisfying 
the uniform covering property
is definable,
regardless of
the replacement scheme for the formulas
of the language $\{\in, W\}$.
\item The $\ka^+$-approximation property in the previous corollary cannot be strengthened to the 
$\ka$-approximation property;
suppose there exists a $\ka$-Suslin tree $T$,
and let $\bbP$ be a $\ka$-c.c. poset adding a cofinal branch of $T$.
If $G$ is $(V, \bbP)$-generic, then 
$V$ satisfies the $\ka$-uniform covering property for $V[G]$,
but a cofinal branch of $T$ witnesses that $V$ does not satisfy the $\ka$-approximation property for $V[G]$.
\item On the other hand, 
if $W\subseteq V$ satisfies the $\ka$-uniform covering property for $V$ 
(hence $V$ is a $\ka$-c.c. forcing extension of $W$) and
there is no $\ka$-Suslin tree in $W$,
then $W$ satisfies the $\ka$-approximation property for $V$
(see Usuba \cite{Usuba}).
\end{enumerate}
\end{note}

\section{The strong DDG}
In this section,
we prove that 
the strong DDG is a theorem of ZFC.

\begin{prop}\label{main 11}
The strong DDG holds.
\end{prop}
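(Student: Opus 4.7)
The plan is to prove the strong DDG by showing that for an arbitrary set $X$ the intersection $M=\bigcap_{r\in X}W_r$ is itself a ground of $V$. First I would normalize: since $X$ is a set, for each $r\in X$ fix $\bbP_r\in W_r$ and a $(W_r,\bbP_r)$-generic $G_r$ with $V=W_r[G_r]$, and choose a regular uncountable cardinal $\ka$ with $\ka>|X|$ and $|\bbP_r|^{W_r}<\ka$ for every $r\in X$. By Fact \ref{3} each $W_r$ satisfies the $\ka$-covering and $\ka$-approximation properties for $V$, and by Fact \ref{Bukovsky} each has the $\ka$-uniform covering property. Fact \ref{2} then shows each $W_r$ is determined by $\p(\ka)\cap W_r$, so the collection $\{W_r:r\in X\}$ is canonically parametrized by the set $\calA=\{\p(\ka)\cap W_r:r\in X\}\in V$.

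Next I would aim to apply Bukovsk\'y's theorem to conclude that $M$ is a ground of $V$; it suffices to show $M$ is a transitive ZFC model containing all ordinals and has the $\ka$-uniform covering property for $V$. Given $f:\alpha\to\mathrm{ON}$ in $V$, the $\ka$-uniform covering property of each $W_r$ produces $F_r\in W_r$ with $\dom(F_r)=\alpha$, $f(\beta)\in F_r(\beta)$, and $|F_r(\beta)|^{W_r}<\ka$. The task is to fabricate a single covering function $F\in M$. The main lever is the $\ka$-approximation property of each $W_r$: an $F$ belongs to $W_r$ iff every restriction $F\restriction x$ with $x\in W_r\cap [\alpha]^{<\ka}$ lies in $W_r$. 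So the problem reduces to producing an $F$ whose small restrictions land in every $W_r$ simultaneously, using the set parameter $\calA$ as a guide.

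The main obstacle is precisely this amalgamation step. The naive candidate $F(\beta)=\bigcup_{r\in X}F_r(\beta)$ has the right cardinality (as $|X|<\ka$), but its small pieces involve the terms $F_{r'}(\beta)$ for $r'\ne r$, which need not lie in $W_r$, so $F$ itself need not belong to any particular $W_r$. A more promising route is to define $F$ as a cofinal branch of a canonical tree built in $V$ whose nodes are bounded candidate covers, with levels of cardinality $<\ka$, and then to invoke Lemma \ref{2.5.111} inside each $W_r$ to conclude that the branch already lives in $W_r$. Once such an $F\in M$ is produced for every $f$, Bukovsk\'y's theorem yields a poset $\bbQ\in M$ and a generic extension recovering $V$, so $M$ is a ground of $V$, and any $s$ with $W_s=M$ witnesses the strong DDG.
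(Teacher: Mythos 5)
Your setup (fixing $\ka$ above $|X|$ and all $|\bbP_r|$, and invoking Facts \ref{3} and \ref{Bukovsky} to get the $\ka$-uniform covering and $\ka$-approximation properties of each $W_r$) matches the paper, but the proposal has two genuine gaps, and the first is exactly the heart of the theorem. You correctly identify the amalgamation step as the main obstacle — producing a single covering function lying in \emph{every} $W_r$ — but the tree idea you offer does not resolve it: Lemma \ref{2.5.111} only transfers a branch of $T$ into a model $W$ under the hypothesis that $T$ \emph{already belongs to} $W$, so to use it you would need the tree itself to lie in $\bigcap_{r\in X}W_r$, which is the same amalgamation problem you started with. The paper's actual solution is different: it runs a length-$\ka$ recursion interleaving all the $W_r$'s, at stage $i$ using the $\ka$-uniform covering property of each $W_r$ to cover the union of all previous approximations $H_{j,s}$ ($j<i$, $s\in X$), and then shows the limit function $H=\bigcup_{i<\ka}H_{i,r}$ lies in every $W_r$ by the $\ka$-approximation property (any $<\ka$-sized piece of $H$ stabilizes at some stage $i^*<\ka$, hence equals a piece of $H_{i^*,r}\in W_r$). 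This interleaving-plus-approximation argument is the key idea missing from your proposal.

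The second gap is that you aim to show $M=\bigcap_{r\in X}W_r$ is itself a ground, which requires knowing $M$ is a transitive model of ZFC before Bukovsk\'y's theorem can be applied — and you give no argument for this; it is essentially as hard as the statement being proved (compare Corollary \ref{4.111}(1), where ``the mantle models ZFC'' is \emph{deduced from} the strong DDG). The strong DDG only asks for \emph{some} ground contained in the intersection, and the paper exploits this slack: it covers a single master function enumerating ${}^{<\chi}\chi$ to produce one set $A\in\bigcap_{r\in X}W_r$ per strong limit $\theta$, takes the local models $M^\theta=\calH(\theta^+)^{L[A]}$, uses the uniqueness given by Lemma \ref{4.3.111} together with a pigeonhole argument on $\p(\ka^{++})\cap M^\theta$ to extract a coherent proper-class sequence, and verifies that the union $W=\bigcup_\theta M^\theta$ is a ZFC model via Fact \ref{model} (G\"odel operations plus almost universality). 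Without some substitute for both the interleaved covering construction and this gluing argument, the proposal does not go through.
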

\begin{proof}
Fix a set $X$. We will construct a ground $W$ of $V$ such that
$W \subseteq W_r$ for all $r \in X$ simultaneously as follows.
Then $W$ is a ground of each $W_r$ by Fact \ref{2.31}.

For each $r \in X$,
there is a poset $\bbP_r \in W_r$
and a $(W_r, \bbP_r)$-generic $G_r$ with $V=W_r[G_r]$.
Fix a regular uncountable $\ka$ such that
$\size{X}<\ka$ and 
$\size{\bbP_r}<\ka$ for every $r \in X$.
Then by Facts \ref{3} and \ref{Bukovsky}, 
each $W_r$
satisfies the $\ka$-uniform covering and the $\ka$-approximation
properties for $V$.

Fix a strong limit cardinal $\theta>\ka$,
and 
let $\chi=\theta^+$.
First we show that there exists a transitive model $M$ of ZFC$-$P
such that $\chi \subseteq M \subseteq \calH(\chi)$
and $M$ satisfies the $\ka^{++}$-uniform covering and the $\ka^{++}$-approximation properties
for $\calH(\chi)$.

Let $\gamma=\chi^{<\chi}$,
and $\seq{f_\xi:\xi<\gamma}$ be an enumeration of ${}^{<\chi} \chi$.
Now define $h:\chi \times \gamma  \to \chi$ as follows:
$h(\alpha,\xi)=f_\xi(\alpha)$ if $\alpha \in \dom(f_\xi)$,
and $h(\alpha,\xi)=0$ otherwise.
\begin{claim}
There is a function $H$ such that $\dom(H)=\chi \times \gamma$,
$h(\alpha,\xi) \in H(\alpha,\xi) \subseteq \chi$,
$\size{H(\alpha,\xi)} < \ka^+$ for $\alpha<\chi, \xi<\gamma$,
and $H \in \bigcap_{r \in X} W_r$.
\end{claim}
\begin{proof}
By induction on $i<\ka$,
we define $H_{i,r}$ for $r \in X$ as follows:
\begin{enumerate}
\item $H_{i,r}$ is a function with $H_{i,r} \in W_r$.
\item $\dom(H_{i,r}) =\chi \times \gamma$,
$h(\alpha,\xi) \in H_{i,r}(\alpha,\xi) \subseteq \chi$, and
$\size{H_{i,r}(\alpha,\xi)}<\ka$
for all $\alpha <\chi$ and $\xi<\gamma$.
\item For all $\alpha<\chi$ and $\xi<\gamma$,
$\bigcup_{j<i, s \in X, }H_{j,s}(\alpha,\xi) \subseteq H_{i,r}(\alpha,\xi)$.
\end{enumerate}
Let $i<\ka$ and suppose $H_{j,s}$ was defined for all $s \in X$ and $j<i$.
Fix $r \in X$. To define $H_{i,r}$.
let $H'$ be such that $H'(\alpha,\xi)=\bigcup_{j<i, s \in X}H_{j,s}(\alpha,\xi)$
for all $\alpha <\chi$ and $\xi<\gamma$.
Since $\size{X}$ and $i$ are less than $\ka$,
we have that $\size{H'(\alpha,\xi)}<\ka$
for all $\alpha <\chi$ and $\xi<\gamma$.
$W_r$ satisfies the $\ka$-uniform covering property for $V$,
hence we can find $H_{i,r} \in W_r$ such that
$H'(\alpha,\xi) \subseteq H_{i,r}(\alpha,\xi) \subseteq \chi$
and $\size{H_{i,r}(\alpha,\xi)}<\ka$
for all $\alpha <\chi$ and $\xi<\gamma$.
It is clear that $h(\alpha,\xi) \in H_{i,r}(\alpha,\xi)$.

Finally, let $H$ be such that $H(\alpha,\xi)=\bigcup_{i<\ka}H_{i,r}(\alpha,\xi)$
for some (in fact all) $r \in X$.
Clearly $h(\alpha,\xi) \in H(\alpha,\xi)$ and $\size{H(\alpha,\xi)}<\ka^+$.
We have to show that $H \in W_r$ for all $r \in X$.

Fix $r \in X$. 
Let $E=\{\seq{\alpha,\xi, \eta} \in \chi\times \gamma\times \chi: \eta \in H(\alpha,\xi)\}$.
It is sufficient to show that $E \in W_r$.
To show that $E \in W_r$, we use the $\ka$-approximation property
of $W_r$.
So take $a \in [\chi \times \gamma \times \chi]^{<\ka} \cap W_r$. We will  see that $a \cap E \in W_r$.
Let $d=\{\seq{\alpha,\xi} \in \chi \times \gamma: \exists \eta\,(\seq{\alpha,\xi,\eta} \in
a \cap E)\}$.
We have $\size{d}<\ka$.
For each $\seq{\alpha, \xi} \in d$,
the set $\{\eta<\chi: \seq{\alpha,\xi, \eta} \in a \cap E\}
\subseteq H(\alpha,\xi)$
has  cardinality $<\ka$,
thus we can find $i(\alpha,\xi)<\ka$
with $\{\eta<\chi:\seq{\alpha,\xi,\eta} \in a \cap E\}
\subseteq H_{i(\alpha,\xi),r}(\alpha,\xi)$.
Put $i^*=\sup\{i(\alpha,\xi):\seq{\alpha,\xi} \in d\}<\ka$.
We have $\{\eta<\chi: \seq{\alpha,\xi,\eta} \in a \cap E\} \subseteq H_{i^*,r}(\alpha,\xi)$
for all $\seq{\alpha, \xi} \in d$.
Then $a \cap E=\{\seq{\alpha,\xi,\eta} \in a: \eta \in H_{i^*,r}(\alpha,\xi) \} \in W_r$.
\qedhere[Claim]
\end{proof}

Fix a bijection $\pi:\chi \times \gamma \times \chi \to \gamma$ with $\pi \in L$,
and let $A=\pi``H$.
Clearly $A \in W_r$ for all $r \in X$.
Since $A$ is a set of ordinals, we have that $L[A]$ is a model of ZFC,
and $H \in L[A]$.
Now let $M=\calH(\chi)^{L[A]}$.
We know $M=L[A] \cap \calH(\chi)$ and $\chi \subseteq M \subseteq \calH(\chi)$,
and since $A \in W_r$ for all $r \in X$, we have $M \subseteq \bigcap_{r \in X} W_r$.

\begin{claim}
$M$ satisfies the $\ka^+$-uniform covering property for $\calH(\chi)$.
\end{claim}
\begin{proof}
Take $f:\alpha \to \chi$ for some $\alpha<\chi$.
Since $\seq{f_\xi:\xi<\gamma}$ is an enumeration of ${}^{<\chi} \chi$,
there is some $\xi^*<\gamma$ with $f=f_{\xi^*}$.
Now define the function $F$ such that $\dom(F)=\alpha$
and $F(\beta)=H(\beta,\xi^*)$.
By the choice of $H$, we have that $f(\beta) =f_{\xi^*}(\beta) \in H(\beta,\xi^*)=F(\beta)$ 
and $\size{F(\beta)}<\ka^+$.
Moreover, since $H \in L[A]$, we have $F \in L[A]$,
and $F \in \calH(\chi)^{L[A]}=M$.
\qedhere[Claim]
\end{proof}
$M$ is $\calH(\chi) \cap L[A]$ and $\chi$ is 
the successor of the strong limit cardinal $\theta$.
So $M$ satisfies the $\ka^{++}$-approximation property for $\calH(\chi)$
by Lemma \ref{4.4.111}.

In summary, for each strong limit cardinal $\theta>\ka$,
we can find a transitive model $M$ of ZFC$-$P such that
$\theta^+ \subseteq M \subseteq \calH(\theta^+)$, $M \subseteq \bigcap_{r \in } W_r$,
and $M$ satisfies the $\ka^{++}$-uniform covering and the $\ka^{++}$-approximation properties 
for $\calH(\theta^+)$. Note that, by Lemma \ref{4.3.111} (and Note \ref{AC}), letting $r =\p(\ka^{++}) \cap M$,
$M$ is a unique transitive model $N$ of ZFC$-$P
such that $\theta^+ \subseteq N \subseteq \calH(\theta^+)$,
$r=\p(\ka^{++}) \cap N$, and
$N$ satisfies the $\ka^{++}$-uniform covering and the $\ka^{++}$-approximation properties 
for $\calH(\theta^+)$.

For each $p \subseteq \p(\ka^{++})$,
let $I_p$ be the class of all strong limit cardinals $\theta>\ka$
such that
there exists
a transitive model $M$ of ZFC$-$P such that
$\theta^+ \subseteq M \subseteq \calH(\theta^+)$, 
$p=M \cap \p(\ka^{++})$,
$M \subseteq \bigcap_{r \in } W_r$,
and $M$ satisfies the $\ka^{++}$-uniform covering and the $\ka^{++}$-approximation properties 
for $\calH(\theta^+)$.
By the pigeonhole argument,
there must be one set $p^* \subseteq \p(\ka^{++})$ such that
$I_{p^*}$ forms a proper class.
For each $\theta \in I_{p^*}$,
fix a unique transitive model $M^{\theta}$ of ZFC$-$P
such that $\theta^+ \subseteq M^\theta \subseteq \calH(\theta^+)$,
$M^\theta \subseteq  \bigcap_{r \in X} W_r$,
$p^*=\p(\ka^{++}) \cap M^\theta$, and
$M^\theta$ satisfies the $\ka^{++}$-uniform covering and the $\ka^{++}$-approximation properties 
for $\calH(\theta^+)$.
The sequence $\seq{ M^{\theta}:\theta  \in I_{p^*}}$ is coherent:
For $\theta<\theta'$ from $I_{p^*}$,
we have $M^\theta=M^{\theta'} \cap \calH(\theta^+)$.
This can be verified as follows:
Since $M^{\theta'} \cap \calH(\theta^+)=\{x \in M^{\theta'}:
M^{\theta'} \vDash$``$\size{\mathrm{trcl}(x)}<\theta^+$''$\}
\in M^{\theta'}$,
it is easy to check that
$M^{\theta'} \cap \calH(\theta^+)$
is a transitive model of ZFC$-$P and satisfies the
$\ka^{++}$-uniform covering property
for $\calH(\theta^+)$.
Then $M^{\theta}=M^{\theta'} \cap \calH(\theta^+)$
by Lemma \ref{4.3.111}.


Finally let $W=\bigcup\{M^\theta:\theta \in I_{p^*}\}$.
Note that $W$ is definable in $V$.
\begin{claim}
$W$ is a transitive model of ZFC containing all ordinals.
\end{claim}
\begin{proof}
Transitivity is clear, and it is also clear that $W$ contains all ordinals.
To show that $W$ is a model of ZFC,
by Fact \ref{model},
it is enough to see that 
$N$ is closed under the G\"odel operations, $N$ is almost universal,
and $N$ satisfies the axiom of choice.

For every $\theta \in I_{p^*}$,
$M^\theta$ is closed under the G\"odel operations.
Since $W=\bigcup\{M^\theta:\theta \in I_{p^*}\}$ and $\seq{M^\theta:\theta \in I_{p^*}}$ is coherent,
we have that $W$ is closed under the G\"odel operations.
The almost universality of $W$ is immediate from the coherency of
$\seq{M^\theta:\theta \in I_{p^*}}$.

We know that $W$ is a model of ZF. 
For the axiom of choice in $W$,  for each $x \in W$,
there is $\theta$ with $x \in M^\theta$.
$M^\theta$ is a model of ZFC$-$P,
thus $M^\theta$ has a well-ordering on $x$. This well-ordering belongs to $W$,
hence $M$ satisfies the axiom of choice as well.
\qedhere[Claim]
\end{proof}

We know that  $W$ is a transitive model of ZFC containing all ordinals.
Clearly $W \subseteq \bigcap_{r \in X} W_r$.
Moreover, $W$ satisfies the $\ka^{++}$-uniform covering property for $V$.
Then $W$ is a  ground of $V$ by Fact \ref{Bukovsky}. This completes the proof.
\end{proof}

We know that the strong DDG is a theorem of ZFC.
So the following fundamental properties become theorems of ZFC as well.
Recall that a \emph{bedrock} is a minimal ground, and a \emph{solid bedrock} is
a minimum ground (\cite{Reitz}, \cite{FHR}).

\begin{cor}\label{4.111}
\begin{enumerate}
\item $\M$ is a transitive model of ZFC and $g\M=\M$.
\item $\M$ is a forcing invariant class, that is,
for every forcing extension $V[G]$ of $V$,
we have $\M=\M^{V[G]}$.
\item The following are equivalent:
\begin{enumerate}
\item Every forcing extension of $V$ has only set many grounds.
\item $V$ has only set many  grounds.
\item $\M$ is a solid bedrock of all forcing extensions of $V$.
\item $\M$ is a solid bedrock of $V$.
\item $\M$ is a ground of $V$.
\item $V$ has a bedrock.
\end{enumerate}
\end{enumerate}
\end{cor}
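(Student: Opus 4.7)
The plan is to derive the three parts from Proposition~\ref{main 11} (the strong DDG just proved), Fact~\ref{2.31} (intermediate-model theorem), and Fact~\ref{gm} (forcing invariance of $g\M$).

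For part (1), that $\M$ is a model of ZFC is immediate from strong DDG via the Fuchs--Hamkins--Reitz result cited in the introduction. To show $\M=g\M$, the inclusion $g\M\subseteq\M$ is trivial. For the reverse, I would fix $x\in\M$ and an arbitrary ground $W$ of an arbitrary forcing extension $V[G]$. Since $V$ is trivially a ground of $V[G]$, strong DDG inside $V[G]$ applied to the pair $\{V,W\}$ produces a common ground $W^{*}$ of $V[G]$ with $W^{*}\subseteq V\cap W$. Because $W^{*}\subseteq V\subseteq V[G]$, Fact~\ref{2.31} tells us that $W^{*}$ is in fact a ground of $V$, hence $x\in\M\subseteq W^{*}\subseteq W$. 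Part (2) then follows immediately by chaining (1) with forcing invariance: $\M^{V}=g\M^{V}=g\M^{V[G]}=\M^{V[G]}$.

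For part (3), I would establish the equivalences $(b)\Leftrightarrow(e)$, $(e)\Leftrightarrow(d)$, $(d)\Leftrightarrow(f)$, and then lift to $(a)\Leftrightarrow(c)$ using (2). The central step is $(b)\Rightarrow(e)$: if $\{W_{r}:r\in X\}$ enumerates all grounds, strong DDG produces a common ground $W\subseteq\bigcap_{r\in X}W_{r}=\M$; since every ground contains $\M$ by definition, one has $W=\M$, so $\M$ is itself a ground. The converse $(e)\Rightarrow(b)$ uses Fact~\ref{2.31}: if $V=\M[G]$ via some $\bbP\in\M$, every ground of $V$ has the form $\M[G\cap\bbQ]$ for a complete subalgebra $\bbQ\in\M$ of $\mathrm{ro}(\bbP)^{\M}$, and such $\bbQ$ form a set in $\M$. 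The remaining links are easy: $(e)\Rightarrow(d)$ is trivial because $\M$ sits inside every ground by definition; $(d)\Rightarrow(f)$ is definitional; and $(f)\Rightarrow(e)$ follows by noting that if $W$ is a minimal ground, DDG combined with minimality of $W$ forces $W\subseteq W'$ for every ground $W'$, whence $W=\M$. Finally, $(a)\Leftrightarrow(c)$ comes from applying the $V$-version inside each $V[G]$ together with $\M=\M^{V[G]}$ from (2).

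The only implication carrying substantive content is $(b)\Leftrightarrow(e)$, where one direction invokes strong DDG and the other invokes Fact~\ref{2.31} to turn the set of complete subalgebras into a set-sized parametrisation of the grounds; every other implication is either trivial, definitional, or a routine DDG-plus-minimality argument.
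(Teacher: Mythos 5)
Your proof is correct and uses the same underlying ingredients as the paper (strong DDG, Fact~\ref{2.31}, forcing invariance of $g\M$), but organises them differently in two places, and both differences are legitimate. For part~(1) you re-derive $\M=g\M$ directly — applying DDG in $V[G]$ to the pair $\{V,W\}$, then using Fact~\ref{2.31} to push the resulting common ground down to a ground of $V$ — whereas the paper simply cites Corollary~51 of Fuchs--Hamkins--Reitz (which is proved in essentially the way you argue). For part~(3) you prove the chain $(b)\Leftrightarrow(e)\Leftrightarrow(d)\Leftrightarrow(f)$ at the level of $V$ and then bootstrap $(a)\Leftrightarrow(c)$ by relativising to each $V[G]$ and invoking $\M^{V[G]}=\M^V$ from part~(2); the paper instead threads a single cycle $(a)\Rightarrow(b)\Rightarrow(c)\Rightarrow(d)\Rightarrow(e)\Rightarrow(f)\Rightarrow(e)\Rightarrow(c)\Rightarrow(a)$, establishing the $V[G]$-statements $(a)$ and $(c)$ directly via $g\M$ rather than by relativisation. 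Your modular layout is arguably cleaner conceptually, since it isolates the one forcing-extension-quantified step as a consequence of forcing invariance; the paper's version is more self-contained in that it never needs to ``re-enter'' $V[G]$ and reprove the $V$-equivalence there. One small imprecision worth tightening: in your $(e)\Rightarrow(b)$ you say ``such $\bbQ$ form a set in $\M$,'' but what bounds the number of grounds is the pairs $(\bbQ,H')$ with $H'\subseteq\bbQ\subseteq ro(\bbP)^{\M}$, giving at most $2^{\size{ro(\bbP)^{\M}}}$ grounds — the generic $H'$ must be ranged over as well, not just the subalgebra.
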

\begin{proof}
(1). In Fuchs-Hamkins-Reitz \cite{FHR},
it was shown that if the strong DDG holds then
$\M$ is a model of ZFC (Theorem 22 in \cite{FHR}).
Now we proved the strong DDG is a theorem of ZFC, so $\M$ is a model of ZFC.
By Corollary 51 in \cite{FHR}, we know that, if the DDG holds in all forcing extensions of $V$,
then $g\M$ is the same as $\M$.
Again, since the strong DDG is a theorem of ZFC,
the DDG holds in all forcing extensions of $V$.
Hence we have that $g\M=\M$.

(2) follows from (1) and Fact \ref{gm}.

(3). (a) $\Rightarrow$ (b) is trivial.

(b) $\Rightarrow$ (c). 
If $V$ has only set many grounds, then by the strong DDG,
$\M$ is a ground of $V$, hence is a solid bedrock of $V$.
By (1), for every forcing extension $V[G]$,
we have $\M^V=g\M \subseteq V \subseteq V[G]$, hence $\M^V=g\M$ is a ground of $V[G]$,
and, by the definition of $g\M$, $\M^V$ is a solid bedrock of $V[G]$.

(c) $\Rightarrow$ (d) $\Rightarrow$ (e) $\Rightarrow$ (f) are also trivial.

(f) $\Rightarrow$ (e).  Suppose $W \subseteq V$ is a bedrock.
If $\M \subsetneq W$, then there is a ground $W'$ and $x \in W$
with $x \notin W'$. By the DDG,
$W$ and $W'$ have a common ground $\overline{W}$.
We know $W=\overline{W}$ by the minimality of $W$,
so $x \notin W$. This is a contradiction.
Hence we have $\M=W$ is a ground of $V$.

(e) $\Rightarrow$ (c).
We see that $\M$ is a solid bedrock of all forcing extensions $V[G]$ of $V$.
Since $\M^V$ is a ground of $V$ and $V$ is a ground of $V[G]$,
we have that $\M^V$ is a ground of $V[G]$.
If $W \subseteq V[G]$ is a ground of $V[G]$,
then by the DDG in $V[G]$, $\M$ and $W$ have a common ground $\overline{W}$.
$\overline{W} \subseteq \M \subseteq V$, hence $\overline{W}$ is a ground of $V$,
so $\M \subseteq W$.

(c) $\Rightarrow$ (a). 
For counting the grounds of $V[G]$, we can take a poset $\bbQ \in \M$ and 
an $(\M, \bbQ)$-generic $H$ such that $V=\M[H]$.
Since 
$\M$ is a solid bedrock of $V[G]$, $\M \subseteq W \subseteq V[G]$ for every ground $W$
of $V[G]$.
Hence by Fact \ref{2.31}, 
every ground $W$ of $V[G]$ is of the form $W[H']$ for some $H' \subseteq \bbQ' \subseteq ro(\bbQ)^{\M}$.
This shows that $V[G]$ has only  $(2^{\size{ro(\bbQ)^\M}})^{V[G]}$ many grounds.
\end{proof}





The \emph{Ground Axiom} is the assertion that
$V$ has no proper ground (Reitz \cite{Reitz}).
If $\M$ is a ground of $V$, then it is clear that
$\M$ has no proper ground. Hence, 
if $\M$ is a ground of $V$,
then $\M$ is a natural model of ZFC+the Ground Axiom.

\section{Hyper-huge cardinals}

In this section, we prove that if a hyper-huge cardinal exists,
then $V$ has only set many grounds, so $\M$ is a ground of $V$.
To do this, we prove the following proposition which is interesting in its own right.
\begin{prop}\label{3.1.1}
Suppose $\ka$ is hyper-huge.
Let $W$ be a ground of $V$.
Then there is a poset $\bbP \in W$ and a $(W, \bbP)$-generic $G$ such that
$\size{\bbP}<\ka$ and $V=W[G]$.
\end{prop}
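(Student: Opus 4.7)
Proof proposal. Fix a witnessing poset $\bbR \in W$ and an $(W,\bbR)$-generic $G_\bbR$ with $V = W[G_\bbR]$, and set $\la = \size{\bbR}^W$. If $\la < \ka$ we may take $\bbP = \bbR$ and are done, so assume $\la \ge \ka$. By the hyper-hugeness of $\ka$ choose an elementary $j : V \to M$ with $\crit(j) = \ka$, $j(\ka) > \la$, and ${}^{j(\la)}M \subseteq M$. The strong closure guarantees that $\bbR$, $\mathrm{ro}(\bbR)^W$, $G_\bbR$, and $j \restriction \bbR$ all belong to $M$. Writing $W = W_r$ in the uniform definition of grounds (Fact~\ref{1}), elementarity gives $j(W) = W_{j(r)}^M$, and $j(W)$ is a ground of $M$ via $j(\bbR)$; moreover $j \restriction W : W \to j(W)$ is elementary with critical point $\ka$.

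The strategy is to show that $W$ satisfies the $\ka$-uniform covering property for $V$ and then invoke Bukovsk\'y's theorem (Fact~\ref{Bukovsky}). Given $f : \alpha \to \mathrm{ON}$ in $V$, I would construct $F \in W$ fiberwise: $F(\beta)$ is the $\ka$-small set of candidate values for $f(\beta)$ obtained by pulling $j(f)(j(\beta)) \in M$ back through $j \restriction W$, covered by a set of $W$-size less than $\ka$ on the $W$-side. That $\size{F(\beta)} < \ka$ should follow from $\crit(j) = \ka$ together with the fact that the $j \restriction W$-preimage of a singleton is a small antichain of possibilities in $W$. That $F \in W$ should follow from the $\ka$-approximation property of $W$ over $V$ (Fact~\ref{3}): checking that $F \cap x \in W$ for every $W$-small set $x$, and concluding $F \in W$, requires only the definability of $W$ in $V$ via the parameter $r$.

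The main obstacle is two-fold. First, arguing that $F$ actually lies in $W$ rather than merely in $V$ or $M$ is the central technical step; here I expect the strong closure of $M$, the elementarity of $j \restriction W$, and the strong DDG (Proposition~\ref{main 11}) to compare $W$ with the ground $j(W)$ of $M$ to be the key ingredients, feeding into an application of the $\ka$-approximation property. Second, Bukovsk\'y's theorem produces a $\ka$-c.c.\ poset in $W$ forcing $V$, rather than a poset of $W$-cardinality less than $\ka$; a further reduction is needed. I expect this reduction to come from another application of the hyper-huge embedding: the reflection inherent in $j$ should identify, inside $\mathrm{ro}(\bbR)^W$, a complete subforcing of $W$-size less than $\ka$ whose generic intersection with $G_\bbR$ already suffices to generate $V$ over $W$. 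Piecing these together yields the desired $\bbP \in W$ with $\size{\bbP}^W < \ka$ and $V = W[G]$ for the induced generic.
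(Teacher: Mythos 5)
There is a genuine gap --- in fact two, located exactly at the points you flag as ``the main obstacle,'' and the paper's proof goes a different way that avoids both. The heart of the paper's argument is to show that $j(W)$ computes $j\restriction j(\la)$: working with an inaccessible $\la>\ka$ and a hyper-huge embedding $j:V\to N$ with ${}^{j(\la)}N\subseteq N$, one fixes in $W$ a pairwise disjoint sequence of stationary sets $S_\alpha\subseteq j(\la)\cap\cof(\om)^W$ and proves (Solovay's argument) that $\alpha\in j``j(\la)$ iff $S^*_\alpha\cap\sup(j``j(\la))$ is stationary in $j(W)$, where $\seq{S^*_\alpha}=j(\seq{S_\alpha})$. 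Hence $j``j(\la)\in j(W)$, so every set of ordinals in $W_{j(\la)}$ lies in $j(W_\la)$, giving $W_{j(\la)}\subseteq j(W_\la)\subseteq N_{j(\la)}=V_{j(\la)}=W_{j(\la)}[H]$. By Fact \ref{2.31} the model $N_{j(\la)}$ is then a forcing extension of $j(W_\la)$ by a poset in $j(W_\ka)$, and elementarity reflects this to: $V_\la$ is a forcing extension of $W_\la$ by a poset in $W_\ka$. Finally a pigeonhole over the proper class of such $\la$ produces a single $\bbP\in V_\ka$ and a single filter $G$ with $W_\la[G]=V_\la$ for class many $\la$, whence $W[G]=V$. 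Nothing in your proposal plays the role of this stationary-set step, and without it there is no mechanism for transferring information about $W$ into $j(W)$ (note that $j(W)$ is a ground of $N$, not of $V$, so the strong DDG does not directly compare $W$ with $j(W)$).

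Concretely: (i) your construction of $F(\beta)$ does not work as described --- by elementarity $j(f)(j(\beta))=j(f(\beta))$, whose preimage under $j$ is the single ordinal $f(\beta)$, so ``pulling back through $j\restriction W$'' produces no set of candidates and certainly no reason that the assembled $F$ lies in $W$ rather than in $V$; the $\ka$-approximation property of $W$ cannot rescue this because the approximations $F\cap x$ would themselves have to be shown to lie in $W$, which is the same difficulty again. (ii) Even granting the $\ka$-uniform covering property, Bukovsk\'y's theorem only yields a $\ka$-c.c.\ poset, and the passage from that to a poset of size $<\ka$ is precisely the content of the proposition; your proposed ``further reduction'' via a small complete subforcing of $\mathrm{ro}(\bbR)^W$ is not carried out and is not routine (a $\ka$-c.c.\ forcing of size $\ge\ka$ need not be equivalent to a small one). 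The paper sidesteps Bukovsk\'y entirely in this proof: the size bound $\size{\bbQ'}<j(\ka)$ comes for free from $\size{\bbQ}<\la<j(\ka)$ inside $N$, and elementarity converts it into the bound $<\ka$ in $V$.
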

\begin{proof}
Fix a poset $\bbQ \in W$ and a $(W, \bbQ)$-generic $H$ such that
$V=W[H]$.

Fix an arbitrary inaccessible cardinal $\la>\ka$ with $\bbQ, H \in V_\la$.
Since $\ka$ is hyper-huge, we can find an elementary embedding $j:V \to N$ such that
the critical point $\ka$, $\la<j(\ka)$, and $N$ is closed under $j(\la)$-sequences.
Let $j(W)=\bigcup_{\alpha} j(W_\alpha)$. 
$j(W)$ is a transitive model of ZFC. Note that $j(W)_{j(\alpha)}=j(W_\alpha)$ for every $\alpha$.

First we show that \[
W_{j(\la)} \subseteq j(W_\la) \subseteq N_{j(\la)} =V_{j(\la)}.
\]
The equality $N_{j(\la)} =V_{j(\la)}$ follows from the closure property of $N$,
and the inclusion $j(W_\la) \subseteq N_{j(\la)}$ follows from the elementarity of $j$.
So the problem is the inclusion $W_{j(\la)} \subseteq j(W_\la)$.
Since $j(\la)$ is inaccessible, both 
$W_{j(\la)}$ and $j(W_\la)$ are models of ZFC.
Thus it is enough to check that if $x \in W_{j(\la)}$ is a set of ordinals,
then $x \in j(W_\la)$.

We use a well-known argument using disjoint stationary subsets, which is due to Solovay.
In $W$, fix a pairwise disjoint sequence $\seq{S_\alpha:\alpha<j(\la)}$ such that
$S_\alpha \subseteq j(\la) \cap \cof(\om)^W$ and $S_\alpha$ is stationary in $j(\la)$ in $W$.
Since $V=W[G]$ and $\size{\bbQ}<\la<j(\la)$,
each $S_\alpha$ is still stationary in $j(\la)$ in $V$.
Let $j(\seq{S_\alpha:\alpha<j(\la)})=\seq{S^*_\alpha:\alpha<j(j(\la))}$.

\begin{claim}
For every $\alpha<\sup(j``j(\la))$,
$\alpha \in j``j(\la)$ if and only if 
$S^*_\alpha \cap \sup(j``j(\la))$ is stationary in $sup(j``j(\la))$ in $j(W)$.
\end{claim}
\begin{proof}[Proof of Claim]
First suppose $\alpha \in j``j(\la)$.
Take $\beta<j(\la)$ with $\alpha=j(\beta)$. We know $S^*_\alpha=j(S_\beta)$.
We check that $S^*_\alpha \cap \sup(j``j(\la))$ is stationary in $\sup(j``j(\la))$ in $j(W)$.
To do this, take a club $C \subseteq \sup(j``j(\la))$ with $C \in j(W)$.
Because the critical point of $j$ is uncountable, we know that
$j``j(\la)$ is an $\om$-club in $\sup(j``j(\la))$ in $V$,
that is, it is unbounded in $\sup(j``j(\la))$ and, 
for every $\xi \in \lim(\sup(j``j(\la)))$, if $\cf(\xi)=\om$ in $V$ then $\xi \in j``j(\la)$.
Hence we know that $C \cap j``j(\la)$ is an $\om$-club in $V$.
Put $D=j^{-1}``(C \cap j``j(\la))$. 
Again, since the critical point of $j$ is uncountable, $D$ is an $\om$-club in $j(\la)$.
$S_\beta$ is stationary in $j(\la)$ in $V$ and $\cf(\xi)=\om$ for every $\xi \in S_\beta$,
so we have that $S_\beta \cap D \neq \emptyset$.
Fix $\xi \in S_\beta \cap D$.
Then $j(\xi) \in j(S_\beta) \cap C$, so $j(\xi) \in S^*_\alpha \cap C$.

For the converse, suppose $S^*_\alpha \cap \sup(j``j(\la))$ is stationary in $j(W)$.
By the elementarity of $j$, $N$ is a forcing extension of $j(W)$ via poset $j(\bbQ)$.
We know $\size{j(\bbQ)}<j(\la)$.
Hence, if $E \in j(W)$ is stationary in $j(\la)$ in $j(W)$,
then $E$ remains stationary in $j(\la)$ in $N$.
This means that $S^*_\alpha \cap \sup(j``j(\la))$ remains stationary in $N$.
$\cf(\xi)=\om$ for every $\xi \in S^*_\alpha$ and $j``j(\la)$ is an $\om$-club,
hence we have $S^*_\alpha \cap j``j(\la) \neq \emptyset$.
Take $j(\xi) \in S^*_\alpha \cap j``j(\la)$.
By the elementarity of $j$, there is some $\beta<\la$ such that
$\xi \in S_\beta$. Hence $j(\xi) \in S^*_{j(\beta)} \cap S^*_\alpha$,
and then $j(\beta)=\alpha$ because the sequence $\seq{S^*_\gamma:\gamma<j(j(\la))}$ is pairwise disjoint.
\qedhere[Claim]
\end{proof}

By the claim,
we have that
\[
j``j(\la)=\{\alpha<\sup(j``j(\la)): S^*_\alpha \cap \sup(j``j(\la))\text{ is stationary in 
$\sup(j``j(\la))$ in $j(W)$} \}.
\]
Thus we have that $j``j(\la)$ is definable in $j(W)$, and $j``j(\la) \in j(W)$.
Now we see that, if $x \in W_{j(\la)}$ is a set of ordinals, then $x \in j(W_\la)$.
Clearly $j(x)  \in j(W)$. Since $x \subseteq j(\la)$,
we have that $j``x=j(x) \cap j``j(\la) \in j(W)$.
Because $j``j(\la) \in j(W)$, we have that $j \restriction j(\la) \in j(W)$,
thus $x=j^{-1}``(j``x) \in j(W)$. Again, since $x \subseteq j(\la)$,
we have $x \in j(W)_{j(\la)}=j(W_\la)$.

We  know $W_{j(\la)} \subseteq j(W_\la) \subseteq N_{j(\la)} =V_{j(\la)}$.
$V=W[H]$, hence $V_{j(\la)}=W_{j(\la)}[H]$, and
$W_{j(\la)} \subseteq j(W_\la)=N_{j(\la)}=V_{j(\la)}=W_{j(\la)}[H]$.
By Fact \ref{2.31},
there is a poset $\bbQ' \subseteq ro(\bbQ)$ and a $(j(W_\la), \bbQ')$-generic $H'$ such that
$j(W_\la)[H']=V_{j(\la)}=N_{j(\la)}$.
Since $\size{\bbQ}<\la<j(\ka)$, we have that
$\size{\bbQ'}<j(\ka)$. 
We may assume that $\bbQ' \in j(W_\la)_{j(\ka)}$ ($=j(W_\ka)$).
Therefore, $N$ satisfies the following statement:
\begin{quote}
There exists a poset $\bbQ'$ and a $(j(W_\la), \bbQ')$-generic $H$
such that $\bbQ' \in j(W_\ka)$ and $j(W_\la)[H']=N_{j(\la)}$.
\end{quote}
By the elementarity of $j$, the following holds in $V$:
\begin{quote}
There exists a poset $\bbQ'$ and a $(W_\la, \bbP)$-generic $H'$
such that $\bbQ' \in W_\ka$ and $W_\la[H']=V_{\la}$.
\end{quote}

Thus, for every sufficiently large inaccessible cardinal $\la$,
there exists a poset $\bbP$ and a $(W_\la, \bbP)$-generic $G$
such that $\bbP \in W_\ka$ and $W_\la[G]=V_{\la}$.
Since $\ka$ is hyper-huge, there are proper class many inaccessible cardinals.
Hence there must be a poset $\bbP \in V_\ka$ and a filter $G \subseteq \bbP$
such that
the collection
\[
\{\la:\text{$\la$ is inaccessible, $G$ is $(W_\la, \bbP)$-generic, and $W_\la[G]=V_\la$}\}
\]
forms a proper class.
Then it is clear that $G$ is $(W, \bbP)$-generic and $W[G]=V$.
\end{proof}

Now the following is immediate from Proposition \ref{3.1.1}:
\begin{cor}\label{3.3.1}
If $\ka$ is hyper-huge, then $V$ has only set many grounds.
\end{cor}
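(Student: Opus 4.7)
The plan is to derive Corollary \ref{3.3.1} from Proposition \ref{3.1.1} together with the Hamkins uniqueness theorem (Fact \ref{2}) by a short counting argument. The idea is that every ground of $V$ must agree with $V$ very tightly, because Proposition \ref{3.1.1} forces it to be produced by a $\ka$-small forcing, and Fact \ref{2} then pins such a ground down from its $\p(\ka)$. Since there are only set many candidates for $\p(\ka)^{W}$, there can be only set many grounds.

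Concretely, fix a ground $W$ of $V$. Proposition \ref{3.1.1} supplies a poset $\bbP \in W$ with $\size{\bbP}^{W} < \ka$ and a $(W,\bbP)$-generic $G$ such that $V = W[G]$. Fact \ref{3} then delivers the $\ka$-covering and $\ka$-approximation properties of $W$ for $V$, and the $\ka$-smallness of $\bbP$ prevents $\ka^{+}$ from being collapsed, so $(\ka^{+})^{W} = \ka^{+}$ (noting that $\ka$ is also a cardinal in $W$, since any bijection in $W$ is a bijection in $V$). Hence both members of any pair of grounds satisfy all three hypotheses of Fact \ref{2} relative to the fixed cardinal $\ka$, and that fact collapses to the implication: if $W_{1}, W_{2}$ are grounds of $V$ with $\p(\ka)^{W_{1}} = \p(\ka)^{W_{2}}$, then $W_{1} = W_{2}$.

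Thus the map $W \mapsto \p(\ka)^{W}$ is an injection from the class of grounds of $V$ into $\p(\p(\ka))$, which is a set. To match the precise definition of ``only set many grounds'', I would then appeal to Fact \ref{1}: for each $s$ in the image of this map, choose a parameter $r_{s}$ with $W_{r_{s}}$ the unique ground satisfying $\p(\ka)^{W_{r_{s}}} = s$, and let $X = \{r_{s}\}$. Then $\size{X} \le 2^{2^{\ka}}$ and $\{W_{r} : r \in X\}$ exhausts the grounds of $V$. The only real obstacle has already been overcome, namely in Proposition \ref{3.1.1}; what remains is a short, essentially bookkeeping-level application of Facts \ref{1}, \ref{2}, and \ref{3}.
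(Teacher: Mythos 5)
Your proof is correct and follows essentially the same route as the paper: Proposition \ref{3.1.1} makes every ground a $\ka$-small forcing ground, and the Hamkins uniqueness theorem then yields an injection from the grounds into a set. The only (cosmetic) difference is that you apply Fact \ref{2} directly at $\ka$ and tag each ground by $\p(\ka)^{W}$, whereas the paper routes through Lemma \ref{3.5}, applying the uniqueness theorem at $\size{\bbP_W}^{+}$ and tagging each ground by the triple $\seq{\bbP_W,\size{\bbP_W}^W,\p(\size{\bbP_W}^W)^W}\in V_\ka$; both versions of the counting argument are valid.
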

\begin{proof}
The assertion follows from Fact \ref{3} and Proposition \ref{3.1.1}.
For each ground $W$ of $V$,
one can find a poset $\bbP_W \in V_\ka$ and a $(W, \bbP)$-generic $G_W$
such that $W[G_W]=V$.
By Lemma \ref{3.5}, for two grounds $W$ and $W'$,
if $\bbP_W=\bbP_{W'}$, $\la=\size{\bbP_W}^W
=\size{\bbP_{W'}}^{W'}$,
 and $\p(\la)^{W}=\p(\la)^{W'}$
then $W=W'$.
Thus the assignment $W \mapsto \seq{\bbP_W, \size{\bbP}^W, \p(\size{\bbP}^W)^W}$
is an injection from the grounds to
$V_\ka$. This means that $V$ has only $\ka$ many grounds.
\end{proof}

By Corollary \ref{4.111}, Proposition \ref{3.1.1},
and Corollary \ref{3.3.1},
we have the following:
\begin{cor}\label{6.4.1}
Suppose $\ka$ is a hyper-huge cardinal. Then the following hold:
\begin{enumerate}
\item $\M$ is a solid bedrock of all forcing extensions of $V$.
\item There is a poset $\bbP\in \M_\ka$ and
a $(\M, \bbP)$-generic $G$ with $V=\M[G]$.
\item $V$ has only $<\ka$ many grounds.
\end{enumerate}
\end{cor}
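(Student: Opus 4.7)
The plan is to derive all three parts from the three cited results: Corollary 4.111 packages the structural consequences of having only set many grounds, Corollary 3.3.1 says a hyper-huge cardinal forces this hypothesis, and Proposition 3.1.1 supplies the key fact that grounds can be witnessed by small posets. I will use them in that logical order.

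First I would invoke Corollary \ref{3.3.1}: since $\ka$ is hyper-huge, $V$ has only set many grounds. Feeding this into Corollary \ref{4.111}(3) (equivalence (b) $\Leftrightarrow$ (c) $\Leftrightarrow$ (e)), I immediately obtain that $\M$ is a solid bedrock of $V$, that $\M$ is in particular a ground of $V$, and that $\M$ is a solid bedrock of all forcing extensions of $V$. This establishes (1).

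For (2), I would apply Proposition \ref{3.1.1} to the ground $W = \M$ (which we just showed is a ground of $V$). The proposition outputs a poset $\bbP \in \M$ with $\size{\bbP}^\M < \ka$ and an $(\M,\bbP)$-generic $G$ such that $V = \M[G]$. Since $\ka$ is inaccessible (being hyper-huge), $\size{\bbP}^\M < \ka$ forces $\bbP \in \M_\ka$, giving (2) directly.

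Finally, for (3), I would use part (2) together with Fact \ref{2.31}: every ground $W$ of $V$ satisfies $\M \subseteq W \subseteq V = \M[G]$, so by Fact \ref{2.31}, each $W$ has the form $\M[H']$ for some complete subforcing $\bbQ' \subseteq \mathrm{ro}(\bbP)^\M$ and some $(\M,\bbQ')$-generic $H'$. The number of such pairs (and hence of grounds of $V$) is bounded above, inside $V[G] = V$, by $(2^{\size{\mathrm{ro}(\bbP)}})^\M$, exactly as in the proof of (c) $\Rightarrow$ (a) of Corollary \ref{4.111}(3). Because $\size{\bbP}^\M < \ka$ and $\ka$ is a strong limit (being hyper-huge, in fact inaccessible), this bound is strictly less than $\ka$, yielding only $<\ka$ many grounds. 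I do not anticipate any real obstacle here; the whole argument is essentially bookkeeping. The only point that needs a moment's care is recognising that the strong-limit property of $\ka$ is what upgrades the bound from ``$\le \ka$'' (which would follow from merely indexing grounds by elements of $V_\ka$) to the sharper ``$< \ka$'' required by the definition of ``only $<\ka$ many grounds''.
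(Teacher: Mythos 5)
Your proof is correct and essentially the same as the paper's: both derive (1) and (2) by chaining Corollary \ref{3.3.1}, Corollary \ref{4.111}(3), and Proposition \ref{3.1.1}, and both count grounds for (3) via Fact \ref{2.31}, bounding them by $2^{|\mathrm{ro}(\bbP)^\M|}<\ka$ using that $\ka$ is inaccessible. (Minor note: the paper's proof of (3) cites Fact \ref{3}, but Fact \ref{2.31} is what is actually used there, as you correctly did; also, for (2) it is cleanest to observe that the proof of Proposition \ref{3.1.1} actually produces $\bbP\in W_\ka$ directly, which sidesteps the small bookkeeping about whether $|\bbP|<\ka$ is computed in $V$ or in $\M$.)
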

\begin{proof}
(1) and (2) are immediate from 
Proposition \ref{3.1.1}, Corollaries \ref{4.111}, and \ref{3.3.1}.

For (3), 
fix a poset $\bbP \in \M_\ka$ and
a $(\M,\bbP)$-generic $G$ with $V=\M[G]$.
By Fact \ref{3},
every ground $W$ of $V$ is of the form
$\M[H]$ for some $H \subseteq \bbQ \subseteq ro(\bbP)^{\M}$.
Hence there are only $2^{\size{ro(\bbP)}}$ ($<\ka$) many grounds of $V$.
\end{proof}

As mentioned before,
$\M$ is a model of ZFC+the Ground Axiom if $\M$ is a ground.
If $\ka$ is hyper-huge, then $\M$ is a ground of $V$ and $V$ is a small forcing extension of
$V$ relative to $\ka$. Then one can check $\ka$ remains hyper-huge in $\M$ as follows:
Suppose $V=\M[G]$ for some $G \subseteq \bbP \in \M$ with $\size{\bbP}<\ka$.
For a given $\la \ge \ka$,
let $j:V\to N$ be an elementary embedding with critical point $\ka$ and ${}^{j(\la)} N \subseteq N$.
$j \restriction \M$ is an elementary embedding from $\M$ to $\M^{N}$.
$\M$ satisfies the $\mu$-covering and the $\mu$-approximation properties for $V$ for some $\mu<\ka$
by Fact \ref{3}. By the results in Hamkins \cite{Hamkins}, we know that
$j \restriction \M$ and $\M^N$ are definable in $\M$, and
$\M^N$ is closed under $j(\la)$-sequences in $\M$. $\la$ was arbitrary,
hence we have that $\ka$ is hyper-huge in $\M$.
Thus $\M$ is a model of ZFC+the Ground Axiom+hyper-huge cardinal exists.
\\

Proposition \ref{3.1.1} also shows the destructibility phenomenon of hyper-huge cardinals.
Laver \cite{Laver} proved  that a supercompact cardinal $\ka$ can be 
indestructible for $<\ka$-directed closed forcings.
In contrast to Laver's theorem,
Bagaria-Hamkins-Tsaprounis-Usuba \cite{BHTU} showed that,
if $\ka$ is extendible, then every non-trivial $<\ka$-closed forcing
must destroy the extendability of $\ka$.
Now, by Proposition \ref{3.1.1},
we know that  any hyper-huge cardinals must be destroyed by any non-trivial non-small forcings.
The following is just a rephrasing of Proposition \ref{3.1.1}:
\begin{cor}
Let $\ka$ be an infinite cardinal.
Let $\bbP$ be a poset, and suppose for every $p \in \bbP$,
the suborder $\{q \in \bbP:q \le p\}$ is not forcing equivalent to
a poset of size $<\ka$.
Then $\bbP$ forces that ``$\ka$ is not hyper-huge''.
\end{cor}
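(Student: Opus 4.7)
The plan is to derive this corollary from Proposition \ref{3.1.1} by a contradiction argument. First, I would assume toward contradiction that some condition $p_0\in\bbP$ forces $\ka$ to be hyper-huge; passing to the suborder $\bbP\restriction p_0$ (which still satisfies the hypothesis, since for $p\le p_0$ the further restriction $(\bbP\restriction p_0)\restriction p$ coincides with $\bbP\restriction p$), I may assume $\bbP$ itself forces this. Let $G$ be $(V,\bbP)$-generic. Then in $V[G]$ the cardinal $\ka$ is hyper-huge and $V$ is a ground of $V[G]$ witnessed by $\bbP$ and $G$.

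Next, I would apply Proposition \ref{3.1.1} inside $V[G]$, taking the ground $W := V$. This yields a poset $\bbQ\in V$ and a $(V,\bbQ)$-generic filter $H\in V[G]$ with $\size{\bbQ}^V<\ka$ and $V[H]=V[G]$. The objective from this point is to leverage the coincidence $V[G]=V[H]$, together with the smallness of $\bbQ$, to show that $\bbP$ is forcing equivalent below some condition to a small poset, contradicting the hypothesis.

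The step I expect to be the main obstacle is the following standard ``mutual genericity'' fact, which I would establish in $V$: if $V[G]=V[H]$ with $G$ being $V$-generic for $\bbP$ and $H$ being $V$-generic for $\bbQ$, then there exist $p\in G$, $q\in H$, and a Boolean isomorphism $\mathrm{ro}(\bbP)\restriction p \cong \mathrm{ro}(\bbQ)\restriction q$. I would derive this by fixing a $\bbP$-name $\dot H$ for $H$ and a $\bbQ$-name $\dot G$ for $G$, then choosing conditions $p\in G$ and $q\in H$ that force $\dot H$ and $\dot G$ to denote $V$-generic filters on $\bbQ$ and $\bbP$ respectively and to invert each other, and finally checking that the induced complete embedding $\mathrm{ro}(\bbQ)\restriction q \to \mathrm{ro}(\bbP)\restriction p$ given by $q'\mapsto\llbracket q'\in\dot H\rrbracket\wedge p$ is surjective below these conditions.

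Once that lemma is in place, the conclusion is immediate: the suborder $\bbP\restriction p$ is forcing equivalent to $\bbQ\restriction q$, and since $\size{\bbQ\restriction q}\le\size{\bbQ}<\ka$, the suborder $\bbP\restriction p$ is forcing equivalent to a poset of size $<\ka$. This contradicts the standing hypothesis on $\bbP$, so no condition of $\bbP$ can force $\ka$ to be hyper-huge, i.e., $\bbP\Vdash$ ``$\ka$ is not hyper-huge''.
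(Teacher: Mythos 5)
Your proposal is correct and is exactly the argument the paper has in mind: the paper gives no proof, noting only that the corollary is ``just a rephrasing of Proposition \ref{3.1.1}'', and your contrapositive reduction via applying that proposition in $V[G]$ with $W=V$ is the intended reading. The one place to be careful is the ``mutual genericity'' step: having chosen $p\in G$ and $q\in H$ forcing that the names $\dot H$ and $\dot G$ denote mutually inverse $V$-generic filters, the map $q'\mapsto \llbracket \check q'\in\dot H\rrbracket\wedge p$ need not be onto $\mathrm{ro}(\bbP)\restriction p$ for that initial choice of $p,q$; one must further shrink to $p^*=p\wedge\llbracket\check q\in\dot H\rrbracket\in G$ and $q^*=q\wedge\llbracket\check p\in\dot G\rrbracket\in H$ (so that $p^*$ forces $\check{q^*}\in\dot H$ and $q^*$ forces $\check{p^*}\in\dot G$), after which the two induced complete homomorphisms become mutually inverse isomorphisms $\mathrm{ro}(\bbQ)\restriction q^*\cong\mathrm{ro}(\bbP)\restriction p^*$. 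This is a routine fix and the rest of your argument, including the reduction to a single condition $p_0$ and the final contradiction with the smallness hypothesis, is sound.
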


\section{Some consequences of main theorems}

In this section, we discuss some consequences of
our main theorems.

The \emph{generic HOD}, $\gHOD$, is the intersection of
HOD of all forcing extensions of $V$
(Fuchs-Hamkins-Reitz \cite{FHR}).
$\gHOD$ is definable as the class $\bigcap\{\HOD^{V^{\col(\alpha)}}:\alpha$ is an ordinal$\}$.

\begin{prop}\label{7.1.1}
$\HOD^{\M}$ is a subclass of $\gHOD$.
If $\M$ is a ground of $V$
then $\HOD^{\M}$ coincides with $\gHOD$.
\end{prop}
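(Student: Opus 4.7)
The proof consists of two inclusions, both exploiting the forcing-invariance of the mantle (Corollary \ref{4.111}(2)) together with standard facts about HOD in weakly homogeneous extensions.

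For $\HOD^{\M} \subseteq \gHOD$, I would fix an ordinal $\theta$ and a $V$-generic filter $G \subseteq \col(\theta)$. By Corollary \ref{4.111}(2), $\M = \M^{V[G]}$, so $\M$ is a first-order definable, parameter-free class of $V[G]$, namely the mantle of $V[G]$. Consequently, $V_\beta \cap \M$ is a definable set of $V[G]$ for each ordinal $\beta$, and via the usual reflection argument the predicate ``$y$ is ordinal-definable in $\M$'' is expressible in $V[G]$. Every $x \in \HOD^{\M}$ is OD in $\M$, and by the transitivity of $\HOD^{\M}$ as a class of $\M$ so is every element of $\mathrm{trcl}(\{x\})$; hence each such $x$ is OD in $V[G]$ together with its whole transitive closure, and $x \in \HOD^{V[G]}$. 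Since $\theta$ and $G$ were arbitrary, $\HOD^{\M} \subseteq \gHOD$.

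For the converse, assuming $\M$ is a ground, I would write $V = \M[H]$ with $H$ being $\M$-generic for some poset $\bbQ \in \M$. Given $x \in \gHOD$, choose $\theta \ge |\bbQ|^{\M}$ so that, inside $\M$, the standard Levy absorption provides a forcing equivalence between $\bbQ \times \col(\theta)$ and $\col(\theta)$, and let $G \subseteq \col(\theta)$ be $V$-generic. Since $\col(\theta) = \col(\theta)^{\M}$ and $G$ is automatically $\M$-generic, the mutually generic pair $(H, G)$ corresponds via the absorption isomorphism to an $\M$-generic filter $G^{*} \subseteq \col(\theta)^{\M}$ satisfying $\M[G^{*}] = \M[H][G] = V[G]$. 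Because $\col(\theta)$ is weakly homogeneous and ordinal-definable in $\M$, the standard weak-homogeneity argument for HOD yields $\HOD^{\M[G^{*}]} \subseteq \HOD^{\M}$, and therefore $x \in \HOD^{V[G]} = \HOD^{\M[G^{*}]} \subseteq \HOD^{\M}$.

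The main technical point is the absorption step in the second half: one must use the Levy absorption, performed inside $\M$, to re-package ``first force with $\bbQ$, then with $\col(\theta)$ over $V$'' as a single $\col(\theta)$-generic over $\M$; once this is done, weak homogeneity of $\col(\theta)$ in $\M$ delivers the HOD-containment. Both ingredients are classical, but the absorption is what forces $\theta$ to be chosen above $|\bbQ|^{\M}$, and it is the only quantitative input to the argument.
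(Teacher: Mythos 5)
Your proof is correct and follows essentially the same route as the paper: the first inclusion via the forcing-invariance (equivalently, parameter-free definability) of $\M$ in every generic extension, and the reverse inclusion via Levy absorption of $\bbQ\times\col(\theta)$ into $\col(\theta)$ over $\M$ together with weak homogeneity of the collapse. The only cosmetic difference is that the paper phrases the first half through $g\M=\M$ rather than directly through Corollary \ref{4.111}(2).
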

\begin{proof}
First note that, by Corollary \ref{4.111},
we know that $g\M=\M$,
hence $\HOD^{g\M}=\HOD^{\M}$.

We show that $\HOD^{\M}=\HOD^{g\M} \subseteq \gHOD$.
Take an arbitrary generic extension $V[G]$ of $V$.
It is enough  to show that $\HOD^{g\M} \subseteq \HOD^{V[G]}$.
Since $g\M$ is a parameter free definable class in $V[G]$,
we have that $g\M_\alpha \in \mathrm{OD}^{V[G]}$ for every $\alpha$.
This means that $\HOD^{\M}=\HOD^{g\M} \subseteq \HOD^{V[G]}$.

Next suppose $\M$ is a ground of $V$.
Put $V=\M[G]$ for some $G \subseteq \bbP \in \M$.
Then we can find an ordinal $\theta$ greater than $\size{2^{\size{\bbP}}}^\M$,
an $(\M, \col(\theta))$-generic $H$, and
a $(V, \col(\theta))$-generic $H'$ such that
$V[H']=\M[H]$,
where $\col(\theta)$ is a standard forcing adding a surjection from $\om$ onto $\theta$.
$\col(\theta)$ is weakly homogeneous,
so we have that $\HOD^{V^{\col(\theta)}}=\HOD^{\M[H]} \subseteq \HOD^{\M}$.
$\gHOD \subseteq \HOD^{V^{\col(\theta)}}$, thus we have $\gHOD \subseteq \HOD^{V^{\col(\theta)}} \subseteq \HOD^{\M}$.
\end{proof}
\begin{note}
If a hyper-huge cardinal exists,
then $\M$ is a ground of $V$ by Corollary \ref{6.4.1}.
Combining this observation with Proposition \ref{7.1.1},
we have that, if a hyper-huge cardinal exists, then
$\HOD^{\M}$ must coincide with $\gHOD$.
\end{note}
\begin{note}
We also note that 
$\HOD^{\M} \subsetneq \gHOD$ is possible.
We use the following fact:
\begin{fact}[Fuchs-Hamkins-Reitz \cite{FHR}]
There is a class forcing extension $V[G]$
in which $V=\M^{V[G]}=g\M^{V[G]}=\gHOD^{V[G]}$.
\end{fact}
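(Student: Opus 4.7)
The plan is to design a class forcing $\bbP$ that simultaneously (a) codes $V$ into a robust definable pattern in $V[G]$ and (b) is rigid enough that $V$ becomes the minimum ground of $V[G]$ and of every further forcing extension. A natural candidate is a class-length Easton-support product indexed by the regular cardinals of $V$, at each stage $\ka$ using an $\Add(\ka^+,1)$-style factor and tuning the GCH pattern so as to encode a fixed $V$-definable well-ordering of $V$. If $V\models V\ne\HOD$, I would compose $\bbP$ with a preliminary McAloon-style coding that forces $V=\HOD$, so that we may assume $V=\HOD$ holds in $V$.

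The first task is $V\subseteq\gHOD^{V[G]}$. The coding produces a single formula $\varphi(x,\vec\alpha)$, with ordinal parameters drawn from a $V[G]$-definable class, such that $V=\{x:\exists\vec\alpha\,\varphi(x,\vec\alpha)\}$ in $V[G]$. Since the recorded cardinal arithmetic at each $\ka$ cannot be altered by further set forcing (by cardinal-counting at the next few cardinals above $\ka$), this formula continues to define $V$ in every further generic extension $V[G][H]$, so every element of $V$ lies in $\HOD^{V[G][H]}$ and hence in $\gHOD^{V[G]}$.

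The second, main task is $V=\M^{V[G]}$. The inclusion $\M^{V[G]}\subseteq V$ is immediate since $V$ is a ground of $V[G]$ via $\bbP$. For the reverse, let $W$ be an arbitrary ground of $V[G]$. By Fact \ref{3}, both $V$ and $W$ satisfy the $\ka$-covering and $\ka$-approximation properties for $V[G]$ for some sufficiently large $\ka$; and $\bbP$ is arranged so that $V$ and $W$ must agree on $\p(\la)$ at cofinally many large regular $\la$ (because both must predict the coded GCH pattern). Fact \ref{2}, applied level by level via $\calH(\la^+)$, then yields $V\subseteq W$. Running the same argument inside every forcing extension of $V[G]$ produces $V=g\M^{V[G]}$. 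Finally, $V=\gHOD^{V[G]}$ follows by combining the preceding equalities with Proposition \ref{7.1.1}: since $\M^{V[G]}=V$ is a ground of $V[G]$, we obtain $\gHOD^{V[G]}=\HOD^{\M^{V[G]}}=\HOD^V=V$, the last step using that $V=\HOD$ in $V$.

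The main obstacle is the rigidity step. The danger is that an alternative ground $W$ of $V[G]$ could reproduce the coded GCH pattern through entirely unrelated forcing, satisfying the covering and approximation hypotheses of Fact \ref{2} while omitting some element of $V$. The standard cure is to couple a $\ka^+$-closed Reitz-style coding layer with an Easton factoring lemma, so that any such $W$ is forced to decode precisely the same well-ordering of $V$ that was used in the construction of $\bbP$, which in turn obliges $W$ to absorb all of $V$.
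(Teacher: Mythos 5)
The paper does not give its own proof of this statement; it is cited verbatim from Fuchs--Hamkins--Reitz, and the underlying construction is the Reitz--FHR class Easton coding that you are attempting to reproduce. Your high-level plan (force $V=\HOD$ first, then use an Easton-support class product to code $V$ into the continuum function, then appeal to covering, approximation, and uniqueness) is in the spirit of that construction, but two of your central steps are incorrect as written.

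First, you claim that $\M^{V[G]}\subseteq V$ is ``immediate since $V$ is a ground of $V[G]$ via $\bbP$.'' This is a genuine error: $\bbP$ is a \emph{class} forcing, and in set-theoretic geology ``ground'' always means ground by \emph{set} forcing, so $V$ is not a ground of $V[G]$ and this inclusion is not immediate. The correct argument factors $\bbP$ as $\bbP_{<\delta}\times\bbP_{\geq\delta}$; each tail model $V[G_{\geq\delta}]$ \emph{is} a ground of $V[G]$ by the set forcing $\bbP_{<\delta}$, and the progressive closure of the tails gives $\bigcap_\delta V[G_{\geq\delta}]\subseteq V$, which is what yields $\M^{V[G]}\subseteq V$. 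Second, your use of Fact \ref{2} to conclude ``$V\subseteq W$'' for an arbitrary ground $W$ of $V[G]$ does not work: Fact \ref{2} produces \emph{equality} of the two models under its hypotheses, and $V=W$ is plainly false when $W=V[G]$. Moreover, Fact \ref{3} only yields covering and approximation for \emph{set}-forcing grounds, so it does not directly give these properties for $V$ over $V[G]$; the closure of the tails must be invoked separately. The repair (following Reitz and FHR) is to compare $W$ not with $V$ but with a tail model $V[G_{\geq\delta}]$: for a suitable $\delta$ both $W$ and $V[G_{\geq\delta}]$ satisfy the $\delta$-covering and $\delta$-approximation properties, the coding forces agreement on $\p(\delta)$, and then Fact \ref{2} gives $W=V[G_{\geq\delta}]\supseteq V$. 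Your paragraph on the rigidity ``obstacle'' gestures at this but substitutes a vague appeal to ``decoding the well-ordering'' for the actual tail-factoring argument that makes the uniqueness theorem applicable.
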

Starting with a model $\HOD^V \subsetneq V$,
take a class forcing extension $V[G]$
in which $V=\M^{V[G]}=\gHOD^{V[G]}$.
Then $\HOD^{\M^{V[G]}}=\HOD^V \subsetneq V=\gHOD^{V[G]}$.
\end{note}



The following gives some restrictions on
 $V[G]$ in the above fact.

\begin{prop}
If $V \neq \M$,
then, for every outer transitive model $W \supseteq V$ of ZFC,
if 
$\M^W$ is a ground of $W$,
then $\M^{W}  \neq V$ (so $g\M^W \neq V)$.
In particular, if $W$ has a hyper-huge cardinal,
then $\M^W \neq V$.
\end{prop}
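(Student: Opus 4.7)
The plan is to proceed by contradiction, reducing the claim immediately to the forcing invariance of the mantle. Suppose toward a contradiction that $\M^W = V$. Since by hypothesis $\M^W$ is a ground of $W$, this forces $V$ itself to be a ground of $W$; that is, $W = V[G]$ for some set-generic filter $G$ over $V$ via some poset $\bbP \in V$.

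With $W$ exhibited as a set-forcing extension of $V$, I would invoke Corollary \ref{4.111}(2), which asserts that the mantle is forcing invariant. This yields
\[
\M^V \;=\; \M^{V[G]} \;=\; \M^W \;=\; V,
\]
directly contradicting the standing assumption $V \neq \M$. Hence $\M^W \neq V$. The parenthetical conclusion $g\M^W \neq V$ then follows at once by applying Corollary \ref{4.111}(1) inside $W$, which identifies $g\M^W$ with $\M^W$.

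For the ``in particular'' clause, I would apply Corollary \ref{6.4.1}(1) inside $W$: the existence of a hyper-huge cardinal in $W$ guarantees that $\M^W$ is a solid bedrock of $W$, and in particular a ground of $W$. Thus the hypothesis of the first half of the proposition is met, and the conclusion $\M^W \neq V$ follows immediately.

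I do not anticipate any substantive obstacle here; the proposition is essentially a repackaging of the forcing invariance of $\M$ proved in Corollary \ref{4.111}(2) together with the characterization, in Corollary \ref{6.4.1}(1), of when a hyper-huge cardinal forces $\M$ to be a ground. The only subtlety to keep an eye on is making sure that the contradictory scenario $\M^W = V$, combined with the hypothesis that $\M^W$ is a ground of $W$, really places $W$ in the class of set-forcing extensions of $V$ to which forcing invariance of the mantle applies; this, however, is immediate from the definition of a ground.
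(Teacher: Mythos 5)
Your proof is correct, but it runs along a different track from the paper's. You reduce everything to forcing invariance of the mantle: from $\M^W = V$ and the hypothesis that $\M^W$ is a ground of $W$ you conclude that $W$ is a set-forcing extension of $V$, and then Corollary \ref{4.111}(2) gives $\M^V = \M^W = V$, contradicting $V \neq \M$. The paper instead exploits minimality: since $V \neq \M^V$, the universe $V$ has a proper ground $M$; if $V = \M^W$ were a ground of $W$, then by Corollary \ref{4.111} it would be a \emph{solid bedrock} (minimum ground) of $W$, yet $M$ is a ground of $\M^W$ and hence of $W$, so minimality forces $M = \M^W = V$, a contradiction. Both arguments are short consequences of Corollary \ref{4.111}; yours leans on the generic-mantle machinery ($g\M = \M$ and forcing invariance), while the paper's uses only the DDG-derived fact that a mantle which is a ground must be the minimum ground. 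The one point you rightly flagged — that $\M^W = V$ together with ``$\M^W$ is a ground of $W$'' puts $W$ among the set-forcing extensions of $V$ — is indeed immediate from the definition of a ground, so there is no gap. Your handling of the parenthetical ($g\M^W = \M^W$) and of the ``in particular'' clause via Corollary \ref{6.4.1} matches what the paper intends.
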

\begin{proof}
Take an outer model $W \supseteq V$.
By our assumption, $\M^W$ is a solid bedrock of $W$ by Corollary \ref{4.111}.
Since $V \neq \M^V$, we have that $V$ has a proper ground $M$.
If $V=\M^{W}$, then $M$ is a ground of $\M^{W}$,
so $M=\M^{W}=V$ by the minimality of $\M^{W}$.
This is a contradiction.
\end{proof}

Next we turn to the generic multiverse,
which was first introduced in
Woodin \cite{Woodin}.
In \cite{Woodin}, Woodin defined his generic multiverse as follows.
For a given countable transitive model $M_0$ of ZFC,
the generic multiverse of $M_0$ is a collection $\calF$ of countable transitive models of ZFC
such that:
\begin{enumerate}
\item $M_0 \in \calF$.
\item If $M \in \calF$ and $W \subseteq M$ is a ground of $M$, then $W \in \calF$.
\item If $M \in \calF$, $\bbP \in M$ is a poset, and $G$ is $(M, \bbP)$-generic, then $M[G] \in \calF$.
\item $\calF$ is a minimal collection satisfying (1)--(3).
\end{enumerate}
Note that (4) is equivalent to the following (4'):
\begin{enumerate}
\item[(4')] For every $M, N \in \calF$,
there are finitely many $W_0,\dotsc, W_n \in \calF$ such that
$W_0=M$, $W_n=N$, and, for each $i<n$,
$W_i$ is a ground or a forcing extension of $W_{i+1}$.
\end{enumerate}
Note also that if $M_0$ is fixed, then Woodin's generic multiverse is unique for $M_0$,
so it is \emph{the} generic multiverse of $M_0$.

On the other hand, Steel \cite{Steel} gave a  slightly different definition of the generic multiverse.
For a transitive set model $M_0$ of ZFC,
a generic multiverse of $M_0$ is a collection $\calF$ of transitive models of ZFC
such that:
\begin{enumerate}
\item $M_0 \in \calF$.
\item If $M \in \calF$ and $W \subseteq M$ is a ground of $M$, then $W \in \calF$.
\item If $M \in \calF$ and $\bbP \in M$ is a poset, then there is an $(M, \bbP)$-generic  $G$
with $M[G] \in \calF$.
\item If $M, N \in \calF$,
then there is $W \in \calF$ which is a common forcing extension of $M$ and $N$.
\end{enumerate}
Unlike Woodin's generic multiverse, 
Steel's generic multiverse is not unique for $M_0$;
it is possible that $M_0$ has many Steel's generic multiverses.

To treat  Woodin's and Steel's generic multiverses simultaneously,
we adopt the following definition of generic multiverse, which is weaker than Woodin's and Steel's definitions.
\begin{define}\label{def. of multiverse}
For a transitive set model $M_0$ of ZFC,
a \emph{generic multiverse} of $M_0$
is a collection $\calF$ of transitive models of ZFC such that:
\begin{enumerate}
\item $M_0 \in \calF$.
\item If $M \in \calF$ and $W \subseteq M$ is a ground of $M$,
then $W \in \calF$.
\item If $M \in \calF$ and $\bbP \in M$ is a poset,
then there is an $(M,\bbP)$-generic $G$ with $M[G] \in \calF$.
\item For every $M, N \in \calF$,
there are finitely many $W_0,\dotsc, W_n \in \calF$ such that
$W_0=M$, $W_n=N$, and, for each $i<n$,
$W_i$ is a ground or a forcing extension of $W_{i+1}$.
\end{enumerate}
\end{define}
Note that if $M_0$ has a generic multiverse in our sense, then $M_0$ must be  countable or
$M_0\cap ON=\om_1$.
Our definition of generic multiverse is weak, 
so if $\calF$ is a generic multiverse in the sense
of Woodin or Steel, then it also is in our sense.


Now fix a transitive model $M_0$ of ZFC with $M_0 \cap ON \le \om_1$.

\begin{lemma}\label{DDG of multiverse}
If $\calF$ is a generic multiverse of $M_0$,
then every two members of $\calF$ have a common ground.
\end{lemma}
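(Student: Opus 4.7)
The plan is to induct on the length $n$ of the zigzag connecting $M$ and $N$ provided by clause (4) of Definition \ref{def. of multiverse}. For the base case $n \le 1$, either $M = N$ or one of the two is a ground of the other; since every model is a ground of itself via the trivial forcing, the smaller of $M$ and $N$ (or $M$ itself when $M=N$) is a common ground of both.

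For the inductive step, write the zigzag as $W_0 = M, W_1, \dots, W_{n+1} = N$. By the inductive hypothesis applied to the sub-zigzag $W_0, \dots, W_n$, there is a common ground $W^*$ of $M$ and $W_n$. I then split on the direction of the final link between $W_n$ and $W_{n+1} = N$. If $W_n$ is a ground of $N$, then transitivity of the ground relation (an immediate consequence of Fact \ref{2.31} applied to the two-step iteration of the witnessing forcings) upgrades $W^*$, already a ground of $W_n$, to a ground of $N$, so $W^*$ itself serves as a common ground of $M$ and $N$. Otherwise $N$ is a ground of $W_n$, and then $N$ and $W^*$ are both grounds of $W_n$; applying the strong DDG \emph{inside the model} $W_n$, which is legitimate since Proposition \ref{main 11} is a theorem of ZFC and $W_n$ is a model of ZFC, yields a common ground $W^{**}$ of $N$ and $W^*$, and by transitivity $W^{**}$ is then also a ground of $M$.

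The crux of the argument is the internal application of Proposition \ref{main 11} inside the intermediate model $W_n$, since this is exactly what lets us collapse a ``peak'' of the zigzag into a single common ground. I do not expect any further obstacle: the zigzag telescopes cleanly under this operation, and the statement of the lemma does not require the common ground produced to lie in $\calF$, so closure of $\calF$ is never an issue in the argument.
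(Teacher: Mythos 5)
Your proof is correct and follows essentially the same route as the paper: induct along the zigzag from clause (4), use transitivity of the ground relation when the last link goes upward, and apply the DDG inside the intermediate model when the last link goes downward (the paper phrases this as ``by the DDG of $W_i$,'' exactly your internal application of Proposition \ref{main 11}). The only difference is cosmetic: you make the transitivity of the ground relation explicit via Fact \ref{2.31}, which the paper leaves implicit.
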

\begin{proof}
Take $M, N \in \calF$.
By the definition of $\calF$,
there are finitely many $W_0,\dotsc, W_n \in \calF$ such that
$W_0=M$, $W_n=N$, and, for each $i<n$,
$W_i$ is a ground or a forcing extension of $W_{i+1}$.
By induction on $i <n$,
we show that $W_0$ and $W_{i+1}$ have a common ground.
The case $i=0$ is clear.
Suppose $W_{0}$ and $W_{i}$ have a common ground $W$.
If $W_i$ is  a ground of $W_{i+1}$, then $W$ is  a common ground of $W_0$ and $W_{i+1}$.
Suppose $W_{i}$ is a forcing extension of $W_{i+1}$.
Then $W$ and $W_{i+1}$ are grounds of $W_i$.
By the DDG of $W_{i}$, $W$ and $W_{i+1}$ have a common ground $W'$,
which is a common ground of $W_0$ and $W_{i+1}$.
\end{proof}
This lemma shows that, in Definition \ref{def. of multiverse},
under the presence of (1)--(3), 
the clause (4) can be replaced by
``every two members of $\calF$ have a common ground''.

\begin{prop}\label{multiverse}
Let $\calF$ be a generic multiverse of $M_0$.
Then $\calF$ is just the collection \begin{center}
$\{M \in \calF: M$ is a forcing extension of 
some ground of $M_0\}$.
\end{center}
Moreover, for $M, N \in \calF$, if $M \subseteq N$,
 then $M$ is a ground of $N$.
\end{prop}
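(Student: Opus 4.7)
The plan is to reduce both claims to the newly established Lemma \ref{DDG of multiverse} (common grounds exist in any generic multiverse) combined with the intermediate model theorem (Fact \ref{2.31}). The multiverse axioms give closure under grounds, so any common ground supplied by Lemma \ref{DDG of multiverse} will automatically sit inside $\calF$, and Fact \ref{2.31} will convert subset inclusions into ground relationships.

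For the first assertion, I would take an arbitrary $M \in \calF$ and apply Lemma \ref{DDG of multiverse} to the pair $M, M_0$ to get a common ground $W \in \calF$. By definition of common ground, $W$ is a ground of $M_0$, and $M$ is a forcing extension of $W$. Hence every $M \in \calF$ is a forcing extension of some ground of $M_0$, giving the displayed equality.

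For the second assertion, suppose $M, N \in \calF$ with $M \subseteq N$. Apply Lemma \ref{DDG of multiverse} again to find a common ground $W \in \calF$ of $M$ and $N$. Then $W \subseteq M \subseteq N$ and $W$ is a ground of $N$, witnessed by some poset $\bbP \in W$ and $(W,\bbP)$-generic $G$ with $W[G]=N$. Now Fact \ref{2.31}, applied with the triple $W \subseteq M \subseteq W[G]=N$, immediately yields that $M$ is a ground of $N$ (via a suitable complete suborder of $\mathrm{ro}(\bbP)^W$), which is exactly what we need.

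I do not anticipate a genuine obstacle here: once Lemma \ref{DDG of multiverse} is in hand, both statements are essentially bookkeeping. The only point to be careful about is that the common ground produced by the lemma lies in $\calF$ (which follows at once from clause (2) of the definition of generic multiverse applied along the finite chain used in the proof of that lemma), and that Fact \ref{2.31} is stated in precisely the form needed to turn the sandwich $W \subseteq M \subseteq N$ into the ground relation between $M$ and $N$.
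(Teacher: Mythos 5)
Your proof is correct and follows essentially the same route as the paper: both parts reduce to Lemma \ref{DDG of multiverse} to produce a common ground, and the second part then applies Fact \ref{2.31} to the sandwich $W \subseteq M \subseteq W[G]=N$ exactly as the paper does. No gaps.
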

\begin{proof}
Let $\calF'=
\{M \in \calF: M$ is a forcing extension of 
some ground of $M_0\}$.
It is clear that $\calF' \subseteq \calF$.
If $M \in \calF$, then $M$ and $M_0$ have a common ground $W \in \calF$ by
Lemma \ref{DDG of multiverse}.
Then $M \in \calF'$ by the definition.

Next take $M, N \in \calF$ with $M \subseteq N$.
By Lemma \ref{DDG of multiverse} again,
$M$ and $N$ have a common ground $W$.
Since $N$ is a forcing extension of $W$ and
$W \subseteq M \subseteq N$,
$N$ is a forcing extension of $M$ by Fact \ref{2.31}.
%
%
%
\end{proof}

If $\M$ is a ground,
then every ground is a forcing extension of $\M$.
Hence we have the following simple view of a generic multiverse under
this assumption:

\begin{prop}\label{5.7.1}
Suppose 
$\M^{M_0}$ is a ground of $M_0$.
Then for every generic multiverse $\calF$ of $M_0$,
the following hold:
\begin{enumerate}
\item $\M^{M_0}=\M^{N}$ for every $N \in \calF$.
\item $\M^{M_0}$ is the  minimum member of $\calF$.
\item Every member of $\calF$ is a forcing extension of $\M^{M_0}$.
\end{enumerate}
\end{prop}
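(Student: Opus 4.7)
The plan is to derive all three statements from two earlier ingredients: the fact that, under the hypothesis $\M^{M_0}$ is a ground of $M_0$, $\M^{M_0}$ is the unique minimum ground of $M_0$ (Corollary \ref{4.111}(3)), and the forcing-invariance of the mantle together with the common-ground property Lemma \ref{DDG of multiverse} available in any generic multiverse.

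First I will prove (1). Given $N \in \calF$, Lemma \ref{DDG of multiverse} produces a common ground $W \in \calF$ of $M_0$ and $N$. Since both $M_0$ and $N$ are set-generic extensions of $W$, applying Corollary \ref{4.111}(2) to $W$ (forcing-invariance of the mantle) yields $\M^N = \M^W = \M^{M_0}$.

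Next I will deduce (3). Take $N \in \calF$ and a common ground $W$ of $M_0$ and $N$ as above. Because $\M^{M_0}$ is the minimum ground of $M_0$ and $W$ is a ground of $M_0$, we have $\M^{M_0} \subseteq W \subseteq M_0$, and $M_0$ is a set-generic extension of $\M^{M_0}$ by hypothesis. Then Fact \ref{2.31} applied to the triple $\M^{M_0} \subseteq W \subseteq M_0$ shows that $\M^{M_0}$ is itself a ground of $W$; since $N$ is a forcing extension of $W$, $N$ is a forcing extension of $\M^{M_0}$. For (2), $\M^{M_0}$ is a ground of $M_0 \in \calF$, hence $\M^{M_0} \in \calF$ by clause (2) of Definition \ref{def. of multiverse}, and by (3) every $N \in \calF$ contains $\M^{M_0}$, so $\M^{M_0}$ is the minimum member of $\calF$.

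There is no real obstacle beyond bookkeeping; the only subtle point is to use the hypothesis ``$\M^{M_0}$ is a ground of $M_0$'' through its consequence in Corollary \ref{4.111}(3) that $\M^{M_0}$ is the minimum, not merely a minimal, ground of $M_0$, since this is what allows us to place $\M^{M_0}$ below an arbitrary common ground $W$ produced by the multiverse's downward directedness.
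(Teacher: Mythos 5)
Your proof is correct and follows essentially the route the paper intends: the paper states this proposition without proof, treating it as immediate from Proposition \ref{multiverse} (whose proof is exactly your appeal to Lemma \ref{DDG of multiverse} plus Fact \ref{2.31}) together with the observation that a mantle which is a ground is the minimum ground (Corollary \ref{4.111}(3)) and is forcing invariant (Corollary \ref{4.111}(2)). Your write-up just makes these steps explicit.
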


Let $\calF$ be a generic multiverse of $M_0$.
A sentence $\varphi$ of set-theory is
a \emph{multiverse truth} (on $\calF$) if every member of a generic multiverse $\calF$ satisfies $\varphi$.
Woodin \cite{Woodin} showed that there is a computable translation $()^*$ on sentences such that,
for every sentence $\varphi$,
$\varphi$ is a multiverse truth if and only if $(\varphi)^*$ holds in $M_0$.
By the DDG, we have the following simple translation.
The next proposition is immediate from Propositions \ref{multiverse} and \ref{5.7.1}.
\begin{prop}\label{5.7.2}
Let $\calF$ be a generic multiverse of $M_0$, and
$\varphi$  a sentence of set theory.
\begin{enumerate}
\item $\varphi$ is a multiverse truth if and only if,
for every ground $W$ of $M_0$ and every poset $\bbP \in W$,
$\bbP$ forces $\varphi$ in $W$.
\item If 
$\M^{M_0}$ is a ground of $M_0$,
then $\varphi$ is a multiverse truth
if and only if,
for every poset $\bbP \in \M^{M_0}$,
$\bbP$ forces $\varphi$ in $\M^{M_0}$.
\end{enumerate}
\end{prop}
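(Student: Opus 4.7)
The plan is to derive both parts from the structural description of $\calF$ supplied by Proposition~\ref{multiverse} (together with Proposition~\ref{5.7.1} in the bedrock case), invoking only the standard semantics of the forcing relation.

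For the forward direction of (1), I would fix $W$ a ground of $M_0$ and $\bbP \in W$, assuming $\varphi$ is a multiverse truth. Since $M_0\in\calF$ and $\calF$ is closed under taking grounds, $W\in\calF$. Toward a contradiction, suppose some $p\in\bbP$ satisfies $p\Vdash_W \neg\varphi$. Apply clause~(3) of Definition~\ref{def. of multiverse} to $W$ and the suborder $\bbP\restriction p = \{q\in\bbP : q\le p\}\in W$: this produces a $(W,\bbP\restriction p)$-generic $G$ with $W[G]\in\calF$. But then $W[G]\models\neg\varphi$, contradicting multiverse truth. For the reverse direction of (1), take any $N\in\calF$; by Proposition~\ref{multiverse}, $N = W[G]$ for some ground $W$ of $M_0$ and some $(W,\bbP)$-generic $G$ with $\bbP\in W$. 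The hypothesis then gives $N\models\varphi$, so $\varphi$ is a multiverse truth.

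For part (2), I would specialize the argument using Proposition~\ref{5.7.1}: when $\M^{M_0}$ is a ground of $M_0$, it is the minimum member of $\calF$ and every $N\in\calF$ is a forcing extension of $\M^{M_0}$. Hence in the equivalence proved in (1), both the universal quantifier over grounds $W$ of $M_0$ (in the forward direction) and the choice of ground presenting $N$ (in the reverse direction) collapse to the single option $W=\M^{M_0}$, which yields the stated biconditional.

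The only subtlety, rather than a genuine obstacle, is the forward direction of (1): Definition~\ref{def. of multiverse}(3) only guarantees the existence of \emph{some} generic filter in $\calF$ over each poset, whereas $\bbP\Vdash_W\varphi$ is a universal statement over all conditions and all generics. The device of replacing $\bbP$ by the restriction $\bbP\restriction p$ below a putative witness $p$ to the failure, and then invoking clause~(3) for this restricted forcing, is precisely what bridges this existential/universal gap.
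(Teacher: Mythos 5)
Your proof is correct and takes the same route as the paper, which simply declares Proposition~\ref{5.7.2} immediate from Propositions~\ref{multiverse} and~\ref{5.7.1}. You supply exactly the expected details, including the one genuine subtlety: restricting to $\bbP\restriction p$ below a putative witness $p\Vdash_W\neg\varphi$ to bridge the gap between the existential guarantee in clause~(3) of Definition~\ref{def. of multiverse} and the universal content of the assertion $\bbP\Vdash_W\varphi$.
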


We also have the following:
\begin{prop}
Let $\calF$ be a generic multiverse of $M_0$, and
$\varphi$  a sentence of set theory.
Then $\varphi$ is a multiverse truth if and only if,
for every poset $\bbP \in M_0$,
$\bbP$ forces ``every ground satisfies $\varphi$'' in $M_0$.
\end{prop}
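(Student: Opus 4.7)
The plan is to reduce to the preceding proposition (Proposition~\ref{5.7.2}) using the strong DDG (Proposition~\ref{main 11}). Both propositions assert equivalence with $\varphi$ being a multiverse truth, so it suffices to show the new condition is equivalent to the criterion of Proposition~\ref{5.7.2}: that for every ground $W$ of $M_0$ and every poset $\bbQ \in W$, $\bbQ$ forces $\varphi$ in $W$.

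For the direction from the Proposition~\ref{5.7.2} criterion to the new one, take $\bbP \in M_0$ and an $(M_0, \bbP)$-generic $G$, and let $U$ be any ground of $M_0[G]$. Since $M_0$ is itself a ground of $M_0[G]$, the strong DDG applied inside $M_0[G]$ yields a common ground $U'$ of $U$ and $M_0$. Then $U'$ is a ground of $M_0$ and $U$ is a forcing extension of $U'$ by some poset $\bbR \in U'$, so the hypothesis gives $U \models \varphi$. Hence $M_0[G] \models$ ``every ground satisfies $\varphi$''.

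For the converse, assume the new condition, fix a ground $W$ of $M_0$ and a poset $\bbQ \in W$, and write $M_0 = W[K]$ with $K$ being $(W, \bbR)$-generic for some $\bbR \in W$. Suppose toward a contradiction that some $p \in \bbQ$ satisfies $W \models p \Vdash \neg\varphi$. By clause~(3) of Definition~\ref{def. of multiverse} applied to the suborder $\{q \in \bbQ : q \le p\}$, there is an $(M_0, \bbQ)$-generic filter $H'$ with $p \in H'$. Since $W \subseteq M_0$, $H'$ is also $(W, \bbQ)$-generic, so $W[H'] \models \neg\varphi$ by the forcing theorem. Mutual genericity of $K$ and $H'$ over $W$ (the product lemma applied to $\bbR$ and $\bbQ$) yields $M_0[H'] = W[K][H'] = W[H'][K]$, exhibiting $W[H']$ as a ground of $M_0[H']$. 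Our assumption then forces $W[H'] \models \varphi$, a contradiction. Hence $\bbQ$ forces $\varphi$ in $W$ for all such $W$ and $\bbQ$, and Proposition~\ref{5.7.2} finishes the job.

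The substantive step is the converse direction: one must both produce an $(M_0, \bbQ)$-generic filter containing a prescribed condition (supplied by clause~(3) of the multiverse definition) and verify via mutual genericity that the intermediate model $W[H']$ genuinely occurs as a ground of $M_0[H']$. The forward direction is a clean application of the strong DDG.
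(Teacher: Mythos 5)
Your argument is correct and it takes a genuinely different route from the paper's. The paper proves the hard direction ($\Leftarrow$) directly by contrapositive: given $M\in\calF$ with $M\models\neg\varphi$, it finds a common ground $N$ of $M$ and $M_0$ with a poset $\bbP\in N$ forcing $\neg\varphi$, then collapses a large $\theta>2^{|\bbP|}$ over $M_0$ so that an $(N,\bbP)$-generic $H$ appears inside $M_0[G]$, whence $N[H]$ is (by Fact~\ref{2.31}) an intermediate ground of $M_0[G]$ failing $\varphi$. You instead factor the statement through Proposition~\ref{5.7.2}, reducing everything to a purely ``ground-and-forcing'' equivalence, and in the hard step you avoid the collapse entirely: applying clause~(3) to the cone below $p$ gives an $(M_0,\bbQ)$-generic $H'$ containing $p$, and the product lemma (since $\bbQ\in W$) shows directly that $W[H']$ is a ground of $M_0[H']$. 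This is cleaner in that it produces the generic over $M_0$ itself rather than over a proper ground, so no cardinality considerations are needed. The cost is an explicit use of the strong DDG inside $M_0[G]$ in the forward direction, which in the paper's version is absorbed into the ``easy'' direction via the multiverse closure clauses; both uses are legitimate since the strong DDG is now a ZFC theorem. Both arguments ultimately rest on the same two ingredients --- the intermediate-model fact and the directedness of grounds --- but deploy them in a different order.
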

\begin{proof}
By the definition of a generic multiverse,
if $\varphi$ is a multiverse truth then
every poset forces ``every ground satisfies $\varphi$'' in $M_0$.

For the converse, suppose
there is  $M \in \calF$ such that $\varphi$ fails in $M$.
By Lemma \ref{DDG of multiverse},
$M$ and $M_0$ have a common ground $N \in \calF$.
Since $M$ is a forcing extension of $N$,
there is a poset $\bbP \in N$ which forces
$\neg \varphi$ in $N$.
Fix a large $\theta>2^{\size{\bbP}}$.
We can find an $(M_0, \mathrm{Col}(\theta))$-generic $G$
with $M_0[G]\in \calF$.
In $M_0[G]$, there is an $(N, \bbP)$-generic $H \in M_0[G]$.
Then $N \subseteq N[H] \subseteq M_0[G]$.
$\varphi$ fails in $N[H]$.
Since $N$ is a ground of $M_0$ and $M_0$ is a ground of $M_0[G]$,
$N[H]$ is a ground of $M_0[G]$.
This means that 
$\mathrm{Col}(\theta)$ forces that
``there is a ground which does not satisfy $\varphi$'' in $M_0$.
\end{proof}

\begin{note}
If a hyper-huge cardinal exists,
then $\M$ is a ground of $V$.
Hence, if a hyper-huge cardinal exists in $M_0$,
then the conclusions of Proposition \ref{5.7.1} and of (2) in Proposition \ref{5.7.2}  hold.
\end{note}

Finally we discuss the maximality principles,
that were studied by Hamkins \cite{Hamkins2}.
Let $\varphi(x)$ be a formula of set theory with free variable $x$,
and $s$ a set.
A sentence $\varphi(s)$ is \emph{forceable} if
there is a poset $\bbP$ which forces $\varphi(s)$,
and $\varphi(s)$ is \emph{necessary} if
every poset $\bbP$ forces $\varphi(s)$.
The \emph{Maximality principle}, $\mathrm{MP}$,
is the assertion that,
every forceably necessary sentence is true in $V$,
that is, for every sentence $\varphi$, 
if
\[
\exists \bbP\,( \Vdash_\bbP\text{``}\forall \bbQ\, \Vdash_\bbQ \varphi\text{''}),
\]
then $\varphi$ holds in $V$.
$\mathrm{MP}(\bbR)$ is the assertion that, 
for every formula $\varphi(x)$ and $r \in \bbR$,
if $\varphi(r)$ is forceably necessary,
then $\varphi(r)$ holds in $V$.
$\square \mathrm{MP}(\bbR)$ is the assertion that
$\mathrm{MP}(\bbR)$ is necessary,
that is, every poset $\bbP$ forces that $\mathrm{MP}(\bbR^{V^{\bbP}})$ holds.

Hamkins \cite{Hamkins2} showed that 
$\mathrm{MP}$ is equiconsistent with ZFC,
and that $\mathrm{MP}(\bbR)$ is  consistent relative to some large cardinal assumption
which is weaker than a Mahlo cardinal.
Moreover, by the proofs in \cite{Hamkins2},
$\mathrm{MP}$ and even $\mathrm{MP}(\bbR)$ are consistent with
almost all large cardinals.
Woodin proved that 
$\square \mathrm{MP}(\bbR)$ is consistent relative to some large cardinal assumption.
However, Woodin's model is an extension of some canonical model of
$\mathrm{AD}_{\bbR}$,
and  no strong large cardinals exist in the resulting model.
Hence it is natural to ask if 
$\square \mathrm{MP}(\bbR)$ is consistent with strong large cardinals,
for example, supercompact cardinals.
Now we can show that our hyper-huge cardinal is inconsistent with
$\square \mathrm{MP}(\bbR)$.
\begin{prop}\label{14}
If $\square \mathrm{MP}(\bbR)$ holds,
then $\M$ is not a ground of $V$.
In particular, if there exists a hyper-huge cardinal,
then $\square \mathrm{MP}(\bbR)$ fails.
\end{prop}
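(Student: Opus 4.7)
The second claim is a cheap consequence of the first: a hyper-huge cardinal in $V$ makes $\M$ a ground by Corollary \ref{6.4.1}(2), so $\square\mathrm{MP}(\bbR)$ cannot hold in $V$. For the substantive first claim, I would argue by contradiction: assume $\square\mathrm{MP}(\bbR)$ and that $\M$ is a ground of $V$, witnessed by a poset $\bbP \in \M$ and an $(\M,\bbP)$-generic $G$ with $V = \M[G]$. The overall plan is to first collapse the generic so that it becomes codable by a real, and then attack the resulting universe with a Cohen-real argument through $\mathrm{MP}(\bbR)$.

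For the reduction, I would force over $V$ with $\bbC = \col(\omega,|\bbP|^{\M})^{\M}$, which is a poset in $\M$, producing an extension $V[c]$. By the product lemma applied over $\M$, the combined filter $(G,c)$ is $\M$-generic for $\bbP \times \bbC \in \M$, a poset that is countable in $V[c]$; hence $(G,c)$ is coded there by a real $x$, and $V[c]=\M[x]$. Crucially, $\M$ is a forcing-invariant class by Corollary \ref{4.111}(2), and $\square\mathrm{MP}(\bbR)$ is preserved by forcing (any $\bbQ \in V[c]$ is absorbed into a two-step iteration $\bbC \ast \dot\bbQ$ over $V$); so after passing to $V[c]$ and relabelling I may assume from the outset that $V = \M[x]$ for some real $x \in \bbR^V$ while $\square\mathrm{MP}(\bbR)$ and the forcing-invariance of $\M$ still hold.

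Now consider the formula $\varphi(x) \equiv \exists r \in \bbR\,(r \notin \M[x])$, in which $\M$ is invoked via its defining formula. Forcing a single Cohen real $d$ over $V$ produces $d \notin V = \M[x]$, and in every further extension $V[d][H]$ the mantle — and hence $\M[x]$ — is unchanged, so $d$ continues to witness $\varphi(x)$; thus $\varphi(x)$ is forceably necessary in $V$. Applying $\mathrm{MP}(\bbR)$ with parameter $x$ then yields a real of $V$ outside $\M[x]$, contradicting $V = \M[x]$. The main obstacle is the bookkeeping in the collapsing step: one needs to verify carefully that ``$\M$ is a ground'', forcing-invariance of $\M$, and $\square\mathrm{MP}(\bbR)$ all transfer from $V$ to $V[c]$, and that the chosen real $x$ genuinely satisfies $V[c]=\M[x]$. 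Once that framework is in place, the Cohen-real plus $\mathrm{MP}(\bbR)$ endgame becomes essentially automatic.
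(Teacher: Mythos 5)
Your proposal is correct, but it takes a genuinely different route from the paper. The paper's proof is a two-line derivation: it quotes a theorem of Hamkins (Fact \ref{17}) asserting that under $\square\mathrm{MP}(\bbR)$, for any forcing-invariant, parameter-free definable inner model $M$ --- which $\M$ is, by Corollary \ref{4.111} --- no $(\alpha^+)^{M}$ is a cardinal of $V$; since a ground via a poset of size $<\alpha$ computes $\alpha^+$ correctly, $\M$ cannot be a ground. You instead give a self-contained argument: collapse $\size{\bbP}^{\M}$ to $\om$ so that (after relabelling) $V=\M[x]$ for a real $x$, and then apply $\mathrm{MP}(\bbR)$ to the forceably necessary statement ``some real is not in $\M[x]$,'' which a Cohen real witnesses permanently because $\M$ (hence $\M[x]$) is forcing-invariant. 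Both arguments rest on the same two pillars --- Corollary \ref{4.111}(2) and the maximality principle applied to a statement switched on permanently by forcing --- but yours trades the citation for some coding bookkeeping (you must phrase ``$r\notin\M[x]$'' as a first-order formula in the real parameter $x$, decoding from $x$ a countable poset isomorphic to one in $\M$ together with an $\M$-generic filter, and check this is absolute to further extensions), all of which does go through; the paper's route is shorter and extracts strictly more information, namely that $(\alpha^+)^{\M}$ is never a $V$-cardinal for any infinite $\alpha$. Your handling of the transfer of $\square\mathrm{MP}(\bbR)$ to $V[c]$ via absorption into a two-step iteration is also correct (and in fact you only need $\mathrm{MP}(\bbR)$ in $V[c]$, which is literally what $\square\mathrm{MP}(\bbR)$ in $V$ asserts for the poset $\bbC$).
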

The proof is immediate from Corollary \ref{4.111}
and the following fact:

\begin{fact}[Hamkins \cite{Hamkins2}]\label{17}
Suppose $\square \mathrm{MP}(\bbR)$.
Let $M \subseteq V$ be a forcing invariant parameter free definable transitive model of ZFC.
Then, for every infinite ordinal $\alpha$,
$(\alpha^+)^{M}$ is not a cardinal in $V$.
In particular,
if $\square \mathrm{MP}(\bbR)$ holds,
then, for every infinite ordinal $\alpha$,
$(\alpha^+)^{\M}$ is not a cardinal in $V$.
\end{fact}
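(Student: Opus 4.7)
The plan is to prove the contrapositive: assume $M \subseteq V$ is a forcing-invariant, parameter-free definable transitive model of $\mathrm{ZFC}$, and assume that for some infinite ordinal $\alpha$ the ordinal $\beta := (\alpha^+)^M$ is a cardinal in $V$; I will then deduce that $\square \mathrm{MP}(\bbR)$ fails. An initial observation I would record is that $\beta$ must actually equal $(\alpha^+)^V$: since $M \subseteq V$, every $V$-cardinal is an $M$-cardinal, so $(\alpha^+)^V \geq (\alpha^+)^M = \beta$; on the other hand $\beta > \alpha$ is by hypothesis a $V$-cardinal, so $\beta \geq (\alpha^+)^V$.

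Let $\psi(x)$ be the parameter-free formula defining $M$, and work with the formula
\[
\varphi(r) \equiv \text{``$r$ codes an ordinal $\alpha$, and $(\alpha^+)^{M}$ is not a cardinal,''}
\]
where $M$ is always interpreted through $\psi$ in the ambient universe. The first step is to force over $V$ with $\col(\omega,\alpha)$, producing $V_1 = V[G]$ that contains a real $r$ coding $\alpha$. Because $\col(\omega,\alpha)$ is $\alpha^+$-c.c.\ in $V$ and $\beta = (\alpha^+)^V$, the ordinal $\beta$ equals $\omega_1^{V_1}$, so $\beta$ is a cardinal in $V_1$; forcing invariance of $M$ gives $(\alpha^+)^M = \beta$ in $V_1$ as well. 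Hence $\varphi(r)$ \emph{fails} in $V_1$.

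Next I would force further over $V_1$ with $\col(\omega,\beta)$ to get $V_2 = V_1[H]$. In $V_2$ the ordinal $\beta$ is countable, hence not a cardinal; forcing invariance again gives $(\alpha^+)^M = \beta$ in $V_2$; so $\varphi(r)$ holds in $V_2$. Moreover $\beta$ stays countable and $M$ stays defined by $\psi$ under any further forcing over $V_2$, so $\varphi(r)$ remains true throughout the generic multiverse above $V_2$. That is, $\col(\omega,\beta)$ witnesses that $\varphi(r)$ is forceably necessary in $V_1$. Because $\square \mathrm{MP}(\bbR)$ holds in $V$, $\mathrm{MP}(\bbR)$ holds in $V_1$, and applied to $\varphi(r)$ this forces $\varphi(r)$ to be true in $V_1$, contradicting the previous paragraph. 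The ``in particular'' clause for $\mathrm{M}$ is immediate, since by Corollary \ref{4.111} the mantle is a forcing-invariant parameter-free definable transitive model of $\mathrm{ZFC}$.

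The main obstacle I anticipate is bookkeeping around the formula $\varphi(r)$: I need the decoding of $\alpha$ from the real $r$, the class $M$ itself, and the operation sending $\alpha$ to $(\alpha^+)^M$ to all be uniformly definable from $r$ alone with no further parameters, and I need each of the forcing steps $V \rightsquigarrow V_1 \rightsquigarrow V_2$ (and beyond) to preserve the pointwise interpretation of $M$. Both requirements are delivered directly by the assumptions that $M$ is parameter-free definable and forcing invariant, so the proof amounts to threading these two hypotheses carefully through the standard MP-style forcing-and-reflection argument.
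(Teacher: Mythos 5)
Your argument is correct. Note that the paper offers no proof of this statement: it is recorded as a Fact and attributed to Hamkins \cite{Hamkins2}, so there is nothing internal to compare against. What you have written is a faithful reconstruction of Hamkins's standard argument: after $\col(\om,\alpha)$ adds a real $r$ coding $\alpha$ while preserving $\beta=(\alpha^+)^M=(\alpha^+)^V$, the statement ``$(\alpha^+)^M$ is not a cardinal'' (with $M$ read off from its parameter-free definition, which forcing invariance keeps pointing at the same class in every further extension) becomes forceably necessary via $\col(\om,\beta)$ yet false, contradicting $\mathrm{MP}(\bbR^{V^{\col(\om,\alpha)}})$; and the ``in particular'' clause for $\M$ does follow from Corollary \ref{4.111}.
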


{\bf Acknowledgements}:
We would like to 
thank   Joel David Hamkins  and Daisuke Ikegami for
many fruitful discussions.
Some of the ideas that came out of those discussions were
used in this paper.
We also thank Paul Larson and Yo Matsubara,
they gave the author many corrections of the draft and insightful suggestions.
Finally we are grateful to David Aspero, Hiroshi Sakai,
Kostantinos Tsaprounis, Yasuo Yoshinobu for their useful comments.
This research was supported by KAKENHI grant Nos. 15K17587 and 15K04984.

\printindex

\begin{thebibliography}{100}

\bibitem{BHTU}
J.~Bagaria, J.~D.~Hamkins, K.~Tsaprounis, T.~Usuba,
{\it Superstrong and other large cardinals are never 
Laver indestructible}.
Arch. Math. Logic, 55 (2016), no.~1--2, 19--35.


\bibitem{Bukovsky}
L.~Bukovsk\'y,
{\it Characterization of generic extensions of models of set theory.}
Fund. Math. 83 (1973), no. 1, 35--46. 

\bibitem{FFS}
S.~D.~Friedman, S.~Fuchino, H.~Sakai,
{\it On the set-generic multiverse}.
Proceedings of the Singapore Programme on Sets and Computations, to appear.

\bibitem{FHR}
G.~Fuchs, J.~D.~Hamkins, J.~Reitz, {\it Set-theoretic geology}.
Ann. Pure Appl. Logic 166 (2015), no. 4, 464--501.

\bibitem{Hamkins}
J.~D.~Hamkins,
{\it Extensions with the approximation and cover properties have no new large cardinals.}
Fund. Math. 180 (2003), no. 3, 257--277.

\bibitem{Hamkins2}
J.~D.~Hamkins,
{\it A simple maximality principle.}
J. Symbolic Logic 68 (2003), no. 2, 527--550.

\bibitem{Jech}
T.~Jech, {\it Set theory. The third millennium edition, revised and expanded.}
Springer-Verlag, 2003.

\bibitem{Kurepa}
D.~R.~Kurepa, {\it Ensembles ordonn\'es et ramifie\'s}. Publ. Math. Univ. Belgrade 4 (1935), 1--138.

\bibitem{Laver}
R.~Laver, {\it Making the supercompactness of $\ka$ indestructible under $\ka$-directed closed forcing.}
Israel J. Math. 29 (1978), no. 4, 385--388.
\bibitem{Laver2}
R.~Laver,
{\it Certain very large cardinals are not created in small forcing extensions.} Ann. Pure Appl. Logic 149 (2007), no. 1-3, 1--6.


\bibitem{Mitchell}
W.~J.~Mitchell, {\it On the Hamkins approximation property.}
Ann. Pure Appl. Logic 144 (2006), no. 1-3, 126--129.

\bibitem{Reitz}
J.~Reitz, {\it  The Ground Axiom.}
J. of Symbolic Logic 72 (2007), no. 4, 1299--1317.

\bibitem{Sch}
R.~Schindler, {\it The long extender algebra}.
Archive for Math. Logic., to appear.
\bibitem{Steel}
J.~Steel, {\it G\"odel's program}.
in: {\it Interpreting G\"odel Critical Essays},
Cambridge University Press, 2014.

\bibitem{Usuba}
T.~Usuba, {\it The approximation property and the chain condition.}
RIMS Kokyuroku, No.~1985 (2014), 130--134.

\bibitem{Woodin}
W.~H.~Woodin, {\it The continuum hypothesis, the generic-multiverse of sets, and the $\Omega$ conjecture.}
Set theory, arithmetic, and foundations of mathematics: theorems, philosophies, 13--42, Lect. Notes Log., 36, Assoc. Symbol. Logic, La Jolla, CA, 2011.

\bibitem{Zarach}
A.~Zarach, {\it Unions of $ZF^-$-models which are themselves $ZF^{-}$-models}.
Logic Colloquium '80 (Prague, 1980), pp. 315--342,
Stud. Logic Foundations Math., 108, North-Holland 1982. 
\bibitem{Zarach2}
A.~Zarach, {\it Replacement $\not \rightarrow$ Collection}.
 In: G\"odel '96  (Brno, 1996), Lecture Notes Logic Vol. 6, pp. 307--322. 
Springer, 1996.
\end{thebibliography}
\end{document}